\theoremstyle{plain}
\newtheorem{theorem}{Theorem}[section]
\newtheorem{lemma}[theorem]{Lemma}
\newtheorem{corollary}[theorem]{Corollary}
\newtheorem{proposition}[theorem]{Proposition}
\newtheorem{assumption}[theorem]{Assumption}
\newtheorem{definition}[theorem]{Definition}
\newtheorem{example}[theorem]{Example}
\newtheorem{remark}[theorem]{Remark}
\numberwithin{equation}{section}
\numberwithin{figure}{section}
\numberwithin{table}{section}
\newcommand{\R}{\mathbb{R}}
\newcommand{\C}{\mathbb{C}}
\newcommand{\N}{\mathbb{N}}
\newcommand{\bigO}{\mathcal{O}}
\newcommand{\microspace}{\mspace{0.5mu}}
\newcommand{\abs}[1]{\left|#1\right|}
\newcommand{\norm}[1]{\left\|#1\right\|}
\newcommand{\triplenorm}[1]{
  \lvert\!\microspace\lvert\!\microspace\lvert #1 
  \rvert\!\microspace\rvert\!\microspace\rvert}
\newcommand{\dualproduct}[3][\Gamma]{\left<#2,#3 \right>_{#1}}
\newcommand{\colvec}[1]{\begin{pmatrix}#1\end{pmatrix}}
\newcommand{\laplace}{\Delta}
\def\TT{\mathcal{T}}
\def\ii{i}
\newcommand{\hone}[1][\R^d]{H^{1}(#1)}
\newcommand{\hdiv}[1][\R^d]{H(\mathrm{div},#1)}
\newcommand{\HH}{\mathcal H}
\newcommand{\HHdiv}{\mathcal H^{\mathrm{div}}}
\newcommand{\XX}{\mathcal X}
\newcommand{\YY}{\mathcal Y}
\newcommand{\hH}{\mathbb H}
\newcommand{\vV}{\mathbb V}
\newcommand{\mM}{\mathbb M}
\newcommand{\LL}{\mathcal L^2}
\newcommand{\bsLL}{\bs L^2}
\newcommand{\hppw}[1]{H^{#1}_{\operatorname{pw}}}
\newcommand{\prodtrace}{\boldsymbol{\gamma}}
\newcommand{\prodnormaltrace}{\boldsymbol\gamma_\nu}
\newcommand{\traceint}[1]{{\gamma}_{#1}^{\mathrm{int}}}
\newcommand{\normaltraceint}[1]{\gamma_{\nu,#1}^{\mathrm{int}}}
\newcommand{\traceext}[1]{{\gamma}_{#1}^{\mathrm{ext}}}
\newcommand{\normaltraceext}[1]{\gamma_{\nu,#1}^{\mathrm{ext}}}
\newcommand{\prodtraceint}{\boldsymbol{\gamma}^{\mathrm{int}}}
\newcommand{\prodnormaltraceint}{\boldsymbol{\gamma}_{\nu}^{\mathrm{int}}}
\newcommand{\prodtraceext}{\boldsymbol{\gamma}^{\mathrm{ext}}}
\newcommand{\prodnormaltraceext}{\boldsymbol{\gamma}_{\nu}^{\mathrm{ext}}}
\newcommand{\tracejump}[2]{\llbracket \gamma_{#1} #2 \rrbracket}
\newcommand{\normaltracejump}[2]{\llbracket \gamma_{\nu,#1} #2\rrbracket}
\newcommand{\tracemean}[2]{\{\!\{ \gamma_#1 #2\} \!\}}
\newcommand{\normaltracemean}[2]{\{\!\{ \gamma_{\nu,#1} #2 \} \!\}}
\newcommand{\prodtracejump}[1]{\llbracket \prodtrace #1  \rrbracket}
\newcommand{\prodnormaltracejump}[1]{\llbracket \prodnormaltrace #1  \rrbracket}
\newcommand{\prodtracemean}[1]{\{\!\{ \prodtrace #1 \}\!\}}
\newcommand{\prodnormaltracemean}[1]{\{\!\{ \prodnormaltrace #1 \}\!\}}
\newcommand{\prodnormaldermean}[1]{\{\!\{ \boldsymbol\partial_\nu #1\}\!\}}
\newcommand{\prodnormalderjump}[1]{\llbracket\boldsymbol\partial_\nu #1\rrbracket}
\newcommand{\prodnormalderext}[1]{\boldsymbol{\partial}_{\nu}^{\mathrm{ext}}}
\newcommand{\bs}[1]{\boldsymbol #1}
\newcommand{\ones}{\mathbf{1}}
\newcommand{\dom}{\operatorname{dom}}
\def\rkA{\mathcal{Q}}
\def\rkb{\mathbf{b}}
\def\rkc{\mathbf{c}}
\def\lifting{\mathscr{E}}
\def\dd{\partial_k}
\def\postproc{\mathbb{P}}
\newcommand{\Cpspace}[2][\mathcal{X}_{\mu}]{\mathcal{C}^{#2}\left(\left[0,T\right],#1\right)}
\newcommand{\fdX}{{\bs X}^{h,k}}
\newcommand{\fdx}{\bs x^{h,k}}
\newcommand{\fdu}{ \bs u^{h,k}}
\newcommand{\fdw}{\bs w^{h,k}}
\newcommand{\fdl}{ \bs \phi^{h,k}}
\newcommand{\fdp}{ { \bs \psi^{h,k}}}
\def\AA{\bs A}
\def\AAstar{\AA_{\star}}
\def\BB{\bs B}
\def\uinc{u^{\text{inc}}}
\def\id{I}
\newcommand{\prodOp}[1]{ \breve{#1}}
\newcommand{\prodSpace}[1]{\left[#1\right]^m}
\newcommand{\fdiv}{\nabla \cdot}
\newcommand{\eremk}{\hbox{}\hfill\rule{0.8ex}{0.8ex}}
\newcommand{\Rbar}{\underline{R}_k}
\newcommand{\opT}{\mathcal{T}}
\newcommand{\opC}{\mathcal{C}}
\pgfplotsset{compat=1.15}
\newcommand{\includeTikzOrEps}[1]{\tikzexternalenable \tikzsetnextfilename{#1}  {\include{figures/#1}} \tikzexternaldisable}
\newcommand{\includeTikzOrEps}[1]{\includegraphics{figures_pdf/#1}}
\title{Time domain boundary integral equations and convolution quadrature for scattering by composite media
(extended preprint)}
\author{Alexander Rieder
  \thanks{Institut f\"ur Analysis und Scientific Computing, TU Wien, 1040 Vienna, Austria\newline E-mail: {\textrm{ alexander.rieder@tuwien.ac.at}}},
  Francisco--Javier Sayas \thanks{Department of Mathematical Science, University of Delaware, passed away April 2, 2019
  }, Jens Markus Melenk \thanks{Institut f\"ur Analysis und Scientific Computing, TU Wien, 1040 Vienna, Austria\newline E-mail: {\textrm{ melenk@tuwien.ac.at}}}}
\date{\today}
\begin{document}

\maketitle
\begin{abstract}
  We consider acoustic scattering in heterogeneous media with piecewise constant wave number. The discretization is carried out
  using a Galerkin boundary element method in space and Runge-Kutta convolution quadrature in time.
  We prove well-posedness of the scheme and provide \textsl{a priori} estimates for the convergence in
  space and time.
\end{abstract}

\section{Introduction}
\label{sect:introduction}
A basic problem in wave propagation is that of scattering in heterogeneous media. A prominent example 
is the classical inverse problem of seismic analysis, where one aims at understanding the structure of a 
medium from the scattered fields of impinging waves. Such an analysis requires efficient methods for the so-called 
forward problem, in which the heterogeneous medium is assumed given and the scattering field of impinging waves is 
computed. In this setting, an important problem class, which is considered in the present work, is that of piecewise homogeneous media. 

When considering piecewise constant material parameters, time domain boundary integral equations (TDBIE) can be applied since 
fundamental solutions for the wave equation are available. A particular strength of boundary integral techniques 
is that they allow for a convenient treatment of unbounded domains, which appear frequently in scattering problems. 

In order to treat the scattering from heterogeneous media embedded in an unbounded homogeneous medium, there are in fact 
several possibilities. One of the more common approaches is to combine the boundary element method in the exterior with 
a finite element method for a bounded domain. This approach was taken in~\cite{bls_fembem} and~\cite{sayas_hassel_fembem}.
For the Schr\"odinger equation, a similar approach was investigated in~\cite{schroedinger}.
\cite{AJRT11} combines a discontinuous Galerkin method with a boundary element method.
Another approach that is suitable for the case of piecewise constant material properties is to use boundary integral equations 
for each subdomain and suitably couple them.  In the context of time-harmonic scattering this approach has been pioneered 
by Costabel and Stephan~\cite{costabel_stephan_trasmission} in the case of a single scatterer and by von~Petersdorff 
for multiple scatterers \cite{von_petersdorff}. These approaches have later been extended 
in~\cite{claeys_hiptmair_2013,claeys2011single,hiptmair_hanckes}.  The case of time-dependent scattering has thus far 
seen less attention, although the case of a single scatterer embedded in a homogeneous medium was treated 
in \cite{tianyu_sayas_costabel}.

For the discretization of the time variable, a variety of approaches have been developed in the past. The oldest one 
is based on a space-time formulation involving the retarded potentials, \cite{BamH,BamH2,GMOSS18}. Another common approach
is based on Lubich's convolution quadrature~\cite{lubich_cq1,lubich_cq2} (CQ) and its Runge-Kutta 
variation (RK-CQ), introduced in~\cite{lubich_ostermann_rk_cq}. The present work takes this route and analyzes 
an RK-CQ. 

In the treatment of TDBIEs using CQ methods the case of scattering by a single impenetrable obstacle (using different boundary conditions to account 
for different material behaviors) has garnered a lot of attention, and the available numerical methods can be considered 
well developed, see \cite{banjai_sauter_rapid_wave,banjai_laliena_sayas_kirchhoff_formulas,dominguez_sayas_2013} 
and the comprehensive treatment in \cite{sayas_book}. 
Recently, these methods have even been extended to a  class of nonlinear scattering problems,
see \cite{banjai_and_me,banjai_lubic_nonlinear_wave_rkcq}.

In this paper, we present and analyze a fully discrete formulation of the multiple-subdomain acoustic scattering problem based on a Galerkin boundary element method
and RK-CQ for the time discretization. Our analysis is based on a pure time-domain point of view, 
combining ideas by \cite{banjai_laliena_sayas_kirchhoff_formulas} and~\cite{sayas_new_analysis} with the theory of Runge-Kutta approximations
of abstract semigroups, as laid out in \cite{mallo_palencia_optimal_orders_rk} and recently extended in~\cite{semigroups}.
A main contribution of the present work is that our analysis covers scattering problems by piecewise constant materials with a very 
    general layout of subdomains.
  Most notably, in comparison to \cite{tianyu_sayas_costabel} 
    we allow for more than one subdomain and permit cross points where more than
    two subdomains touch. Our approach therefore generalizes the
    results of~\cite[Chapters 3 and 4]{qiu_thesis}, which only allows certain nested geometries (see Section~\ref{sect:particular_configurations}).
    Compared to
    other works, e.g., \cite{tianyu_sayas_costabel,qiu_thesis} we also
    consider RK-CQ using the novel time-domain analysis developed in~\cite{semigroups}, whereas previous analyses concentrated
    on multistep methods, whose order, however, is limited to $2$ if A-stability is required. 
    These higher order RK-methods suffer from some reduction of order phenomenon in that the convergence order
    falls somewhere between the stage- and classical order of the RK-method. By
    careful analysis of the regularity of certain lifting problems, we are able to establish 
    an improved convergence by $k^{1/2}$ compared to a more straight-forward analysis,
    as long as the mild assumption is made that the incident wave is in $L^2(\partial \Omega_0)$.

  %Unlike to what is available in the literature, our analysis considers the time domain wave
  %propagation problem, allowing for piecewise constant wave numbers on an arbitrary number and
  %layout of subdomains. 
  We analyze several RK-CQ formulations for a scattering problem. For a slightly non-standard formulation based on 
  differentiating the Dirichlet data we show that a higher order of convergence can be achieved than for the more standard
  one based on using same order of differentiation of the Dirichlet and Neumann data.

Related to our approach is the recent \cite{EFHS19}, which studies, on the continuous level, well-posedness of certain TDBIEs for 
acoustic scattering problems.  \cite{EFHS19} considers the case of two subdomains (plus the exterior) endowed with suitable 
transmission conditions and general boundary conditions.  Their approach relies on frequency domain estimates.  
  %on more complicated boundary conditions and establish well posedness using frequency domain estimates but
  %do not consider any discretization effects.
  In contrast, our analysis includes a fully discrete convergence analysis for more complicated geometric situations of arbitrary
number of subdomains. In order to do so, we use the novel ``time-domain only approach'' developed
along side this paper and presented recently in~\cite{semigroups}, showcasing
that this approach is feasible for complex model problems that go beyond rather simple ones.
Compared to the Laplace domain approach, the pure time domain theory offers several advantages.
  Firstly, at least for the spatial discretization, it leads to sharper estimates with lower
  regularity requirements on the input data. It then makes sense to stay in the time-domain
  also for the CQ-analysis.  
  Secondly, it allows for sharper control on how the estimates degenerate as time grows.
  Thirdly it, to some degree, allows for a wider set of Runge-Kutta methods. Namely,
  the estimates on the $H^1$-norm and the Dirichlet-trace of the post-processed solution
  also hold, for example, for the Gauss-methods.
  Finally, the theory better emphasizes the dynamical nature of the underlying problem,
  whereas this remains opaque when only analyzing transfer functions  for the
  boundary integral operators.

The paper is structured as follows. In Section~\ref{sect:model_problem} we present the details of the model problem under consideration.
We reformulate the problem in the language of $C_0$-semigroups and prove well-posedness. (In order to streamline the presentation,
all this is done in a semidiscrete setting that takes into account the Galerkin discretization in space).
Section~\ref{sect:integral_equations} presents a boundary integral formulation and establishes equivalence in the fully continuous and semidiscrete settings.
Section~\ref{sect:rkcq} deals with the discretization of the time-variable using Runge-Kutta based convolution quadrature and gives the final 
fully discrete scheme for the scattering problem. We give explicit error bounds for the convergence in space and time.
Section~\ref{sect:particular_configurations} relates our results to the existing literature by showing equivalences in certain simpler geometric situations.
In Section~\ref{sect:numerics} we give  numerical examples in 2D.

Compared to the published version, this extended preprint contains the additional Appendix
\ref{sect:local_higher_order},
which details how to prove full classical convergence rates away from the boundary
and in a pointwise setting; see also Theorem~\ref{thm:local_and_pointwise_convergence}.

We close with a remark on notation. Throughout this article we will encounter collections of functions on different levels.
Functions defined on a single subdomain  will be denoted by regular lowercase characters. For collections
of such functions for multiple subdomains, we will use bold characters. When discretizing in time using an $m$-stage Runge-Kutta method,
we will add the superscript $k$ to all quantities.
For each scalar quantity, we obtain a stage vector of $m$ functions. These will be denoted by uppercase letters, the corresponding (scalar) approximations
at the time-steps will then again be the same lowercase letter.
For example, starting from scalar functions $u_\ell$ on $\Omega_{\ell}$, collecting them gives $\bs u:=(u_{\ell})_{\ell=0}^{L}$.
The stage vector of their RK-approximation is then $\bs U^{k}$ and the scalar approximation will be $\bs u^{k}$.
The same rules will be applied to functions defined on the boundary of subdomains, except that we will use the Greek alphabet.

\section{Model problem and notation}
\label{sect:model_problem}

We consider the scattering of waves from one or multiple objects, with possibly adjacent parts and different material properties. 
We are given mutually disjoint bounded Lipschitz domains $\Omega_\ell \,\subseteq \R^d$, $\ell=1,\dots,L$, and we use
\[
 \Omega_0:=\R^{d}\setminus \bigcup_{\ell=1}^{L}{\overline{\Omega_\ell}},
\qquad
 \Gamma:=\bigcup_{\ell=1}^{L} \partial \Omega_{\ell},
\]
to respectively denote the unbounded exterior domain (which might be disconnected) and the union of the boundaries of all the domains. In physical terms, the scatterer occupies the closed set $\cup_{\ell=1}^L \overline{\Omega_\ell}$, while $\Omega_0$ is the surrounding medium. The set $\Gamma$ will be called the skeleton of the partition of the scatterer. The acoustic behavior of the surrounding domain and the scatterer is described with two piecewise constant positive functions $\kappa, c:\mathbb R^d\to (0,\infty)$ given by
\[
\kappa|_{\Omega_\ell}\equiv\kappa_\ell > 0,
\qquad
c|_{\Omega_\ell}\equiv c_\ell > 0, \qquad \ell=0,\ldots,L.
\]

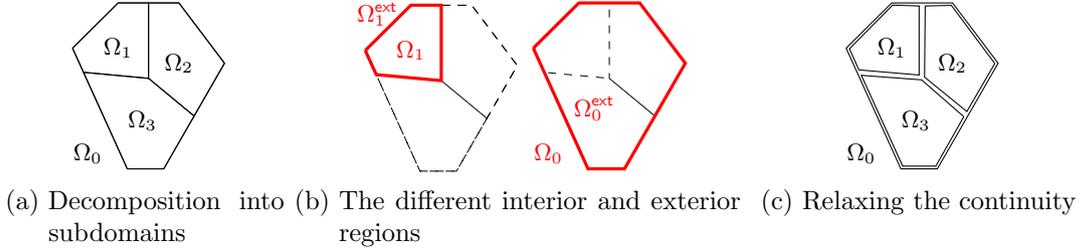
\begin{figure}[htb]  
  \begin{subfigure}[t]{0.25\textwidth}
    \begin{center}
      \footnotesize
    \begin{tikzpicture}[scale=0.4]
\draw (0,4) -- (1.5,5.5) -- (2.5,5.5) -- (3.5,5.5)  -- 
(5,3.5) -- (4,1.75) -- (3,0) -- (1.8,0) -- (0.36,3.2) -- (0,4);
\draw (0,4) -- (1.5,5.5) -- (2.5,5.5)  -- (2.5,3) -- (0.36,3.2) -- (0,4);
\draw (2.5,5.5) -- (3.5,5.5)  -- (5,3.5) -- (4,1.75)  -- (2.5,3) -- (2.5, 5.5) ;
\draw (4,1.75) -- (3,0) -- (1.8,0) -- (0.36,3.2) -- (2.5,3) -- (4,1.75);
\draw (1.5,4) node {$\Omega_1$};
\draw (3.5,3.5) node {$\Omega_2$};
\draw (2.3,1.6) node {$\Omega_3$};
\draw (0.5,0.5) node {$\Omega_0$};
\end{tikzpicture}
\caption{Decomposition into subdomains}
\end{center}
\end{subfigure}
\begin{subfigure}[t]{0.4\textwidth}
  \begin{center}
    \footnotesize
\begin{tikzpicture}[scale=0.4]
\draw[very thin, dashed] (0,4) -- (1.5,5.5) -- (2.5,5.5) -- (3.5,5.5)  -- 
(5,3.5) -- (4,1.75) -- (3,0) -- (1.8,0) -- (0.36,3.2) -- (0,4);
\draw[very thin, dashed] (2.5,5.5) -- (3.5,5.5)  -- (5,3.5) -- (4,1.75)  -- (2.5,3) -- (2.5, 5.5) ;
\draw[very thin, dashed] (4,1.75) -- (3,0) -- (1.8,0) -- (0.36,3.2) -- (2.5,3) -- (4,1.75);
\draw[red, very thick] (0,4) -- (1.5,5.5) -- (2.5,5.5)  -- (2.5,3) -- (0.36,3.2) -- (0,4);
\draw[red] (1.5,4) node {$\Omega_1$};
\draw[red] (0.4,5.2) node {$\Omega_1^{\mathsf{ext}}$};
%\draw (3.5,3.5) node {$\Omega_2$};
%\draw (2.3,1.6) node {$\Omega_3$};
%\draw (0.5,0.5) node {$\Omega_0$};
\end{tikzpicture}
\begin{tikzpicture}[scale=0.4]
%\draw[very thin, dashed] (0,4) -- (1.5,5.5) -- (2.5,5.5)  -- (2.5,3) -- (0.36,3.2) -- (0,4);
\draw[very thin, dashed] (2.5,5.5) -- (3.5,5.5)  -- (5,3.5) -- (4,1.75)  -- (2.5,3) -- (2.5, 5.5) ;
\draw[very thin, dashed] (4,1.75) -- (3,0) -- (1.8,0) -- (0.36,3.2) -- (2.5,3) -- (4,1.75);
\draw[red, very thick] (0,4) -- (1.5,5.5) -- (2.5,5.5) -- (3.5,5.5)  -- 
(5,3.5) -- (4,1.75) -- (3,0) -- (1.8,0) -- (0.36,3.2) -- (0,4);
%\draw (1.5,4) node {$\Omega_1$};
%\draw (3.5,3.5) node {$\Omega_2$};
%\draw (2.3,1.6) node {$\Omega_3$};
\draw[red] (0.5,0.5) node {$\Omega_0$};
\draw[red] (2,2) node {$\Omega_0^{\mathsf{ext}}$};
\end{tikzpicture}
\caption{The different interior and exterior regions}
\end{center}
\end{subfigure}
\begin{subfigure}[t]{0.3\textwidth}
  \begin{center}
\footnotesize
%\begin{tikzpicture}[scale=0.4]
%\draw (0,4) -- (1.5,5.5) -- (2.5,5.5) -- (3.5,5.5)  -- 
%(5,3.5) -- (4,1.75) -- (3,0) -- (1.8,0) -- (0.36,3.2) -- (0,4);
%\draw (0,4) -- (1.5,5.5) -- (2.5,5.5)  -- (2.5,3) -- (0.36,3.2) -- (0,4);
%\draw (2.5,5.5) -- (3.5,5.5)  -- (5,3.5) -- (4,1.75)  -- (2.5,3) -- (2.5, 5.5) ;
%\draw (4,1.75) -- (3,0) -- (1.8,0) -- (0.36,3.2) -- (2.5,3) -- (4,1.75);
%\draw (1.5,4) node {$\Omega_1$};
%\draw (3.5,3.5) node {$\Omega_2$};
%\draw (2.3,1.6) node {$\Omega_3$};
%\draw (0.5,0.5) node {$\Omega_0$};
%\end{tikzpicture}
\begin{tikzpicture}[scale=0.4]
\draw (0,4) -- (1.5,5.5) -- (2.5,5.5) -- (3.5,5.5)  -- 
(5,3.5) -- (4,1.75) -- (3,0) -- (1.8,0) -- (0.36,3.2) -- (0,4);
\draw (0.11,3.98) -- (1.53,5.43) -- (2.40,5.40)  -- (2.40,3.11) -- (0.42,3.28) -- (0.11,3.98);
\draw (2.60,5.40) -- (3.46,5.42)  -- (4.89,3.49) -- (3.97,1.90)  -- (2.58,3.04) -- (2.60, 5.40) ;
\draw (3.87,1.73) -- (2.95,0.09) -- (1.86,0.09) -- (0.50,3.10) -- (2.48,2.95) -- (3.87,1.73);
\draw (1.5,4) node {$\Omega_1$};
\draw (3.5,3.5) node {$\Omega_2$};
\draw (2.3,1.6) node {$\Omega_3$};
\draw (0.5,0.5) node {$\Omega_0$};
\end{tikzpicture}
\caption{Relaxing the continuity}
\label{fig:broken_interfaces}
\end{center}
\end{subfigure}
\caption{Geometry of the problem and notation for subdomains}
\end{figure}

\subsection{The scattering problem}
We now give the formal definition of the model problem. We will later on encounter
  other, equivalent, formulations.  We start with an incident wave.
Generally thinking of transient plane waves, we assume that we are given a sufficiently smooth function $u^{\mathrm{inc}}:\mathbb R\to H^1_{\mathrm{loc}}(\Omega_0)$ such that
\begin{subequations}
\begin{alignat}{6}
\label{eq:2.0}
c_0^{-2} \ddot{u}^{\mathrm{inc}}(t) =\kappa_0 \laplace u^{\mathrm{inc}}(t) & \qquad & \text{in $\Omega_0$ }
  \forall t\in \mathbb R,\\
\mathrm{supp}\, u^{\mathrm{inc}}(t) \subseteq \Omega_0 & & \forall t\le 0.
\end{alignat}
\end{subequations}
(For cylindrical or spherical  incident waves, a source term, supported strictly in $\Omega_0$, has to be added in \eqref{eq:2.0}.)
The total wave field is then a function $u^{\text{tot}}:\mathbb R\to H^1_{\mathrm{loc}}(\R^d)$ satisfying
\begin{alignat*}{6}
  c^{-2}\ddot u^{\text{tot}}(t)=\nabla\cdot (\kappa \nabla u^{\text{tot}})(t)  &\qquad &
  \text{in $\R^d \setminus \Gamma$ }\forall t\in \mathbb R,\\
u^{\text{tot}}(t)=u^{\mathrm{inc}}(t) & & \mbox{in $\Omega_0$} \quad \forall t\le 0,\\
\mathrm{supp}\, (u^{\text{tot}}-u^{\mathrm{inc}})(t) \mbox{ is bounded} & & \forall t\in \mathbb R.
\end{alignat*}
The condition $u^{\text{tot}}(t)\in H^1_{\mathrm{loc}}(\R^d)$ implies that the traces of $u^{\text{tot}}(t)$ do not jump across $\Gamma$. The fact that we are applying the divergence operator to $\kappa\nabla u^{\text{tot}}(t)$ in $\mathbb R^d$ implies that the normal components of $\kappa\nabla u^{\text{tot}}(t)$ do not jump across $\Gamma$.

We will write the problem in terms of the scattered wave $u:=u^{\text{tot}}-u^{\mathrm{inc}}$ and restricted to the time interval $[0,\infty)$. The vanishing values of $u$ for negative times will make a reappearance once the retarded potentials are introduced. To introduce this formulation, while avoiding to deal with the possibly complicated forms for the intersections of the boundaries of the subdomains $\Omega_\ell$, we proceed as follows. We first extend $u^{\mathrm{inc}}$ to the equally named function $u^{\mathrm{inc}}:[0,\infty)\to L^2(\R^d)$ by setting $u^{\mathrm{inc}}(t)\equiv 0$ in $\R^d\setminus\Omega_0$ for all $t$.
This is mostly for notational convenience, as it allows us to write 
the equations for $u^{\mathrm{tot}}=u+u^{\mathrm{inc}}$ in a concise way such as in \eqref{eq:1} below. To that end, 
 we introduce an arbitrary open ball $B$ that contains the skeleton $\Gamma$. The scattered field is then a function $u:[0,\infty)\to H^1(\R^d\setminus\Gamma)$ satisfying
\begin{subequations}\label{eq:1}
\begin{alignat}{6}
c^{-2} \ddot u(t)= \nabla\cdot(\kappa \nabla u)(t) & \qquad & \mbox{in $\R^d\setminus\Gamma$} \quad & \forall t\ge 0,\\
\label{eq:1b}
u(t)+u^{\mathrm{inc}}(t)\in H^1(B)  & & &  \forall t\ge 0, \\
\label{eq:1c}
\kappa\nabla (u(t)+u^{\mathrm{inc}}(t))\in H(\mathrm{div},B)
&  & &  \forall t\ge 0, 
\end{alignat} 
with vanishing initial conditions
\begin{equation}
u(0)=0, \qquad \dot u(0)=0.
\end{equation}
\end{subequations}

\subsection{A multiply overlapped wave problem}

In this section we will formulate a generalization of problem \eqref{eq:1} including some sort of partial observation of transmission conditions on the skeleton. We will end up having $L+1$ fields $u_0,\ldots, u_L$, where $u_\ell|_{\Omega_\ell}$ will be, in a sense to be made precise later, an approximation of $u|_{\Omega_\ell}$. The transmission conditions implicit in equations \eqref{eq:1b} and \eqref{eq:1c} will be relaxed and, at the same time, given a trace operator-based formulation. A rigorous formulation of the problem will use a considerable collection of spaces and operators, which we now introduce:
\begin{enumerate}
\item Trace operators
\[
\traceint{\ell}, \traceext{\ell}, \tracejump{\ell}{\,\cdot}, \tracemean{\ell}{\,\cdot}:
\hone[\R^d\setminus\partial\Omega_\ell]\to H^{1/2}(\partial\Omega_\ell),
\]
where the interior and exterior traces are self-explanatory (note that for $\partial\Omega_0$, the interior trace is taken from the unbounded domain $\Omega_0$) and
\[
\tracejump\ell u:=\traceint\ell u-\traceext\ell u,
\qquad
\tracemean\ell u:=\tfrac12(\traceint\ell u+\traceext\ell u).
\]
\item Weak normal trace operators
\[
\normaltraceint\ell, \normaltraceext\ell, \normaltracejump\ell{\,\cdot},
\normaltracemean\ell{\,\cdot}:
\hdiv[\R^d \setminus \partial \Omega_{\ell}] \to H^{-1/2}(\partial \Omega_{\ell}),
\]
defined similarly with, e.g.,  $\normaltraceint\ell u = \nu_\ell \cdot u_\ell|_{\Omega_\ell}$ for sufficiently smooth $u$ and 
noting that the normal is always taken to point out of the corresponding domain $\Omega_{\ell}$.
\item Four spaces collecting  $L+1$ fields (scalar or vector-valued) on $\R^d\setminus\partial\Omega_\ell$,
\begin{align*}  
  \HHdiv&:=\prod_{\ell=0}^{L}{\hdiv[\R^d \setminus \partial \Omega_{\ell}]}, &
  \HH&:=\prod_{\ell=0}^{L}{\hone[\R^d \setminus \partial \Omega_{\ell}]},  \\
  \HH^{-1/2}_\Gamma&:=\prod_{\ell=0}^{L}{H^{-1/2}\left(\partial \Omega_\ell\right)}, &
  \HH^{1/2}_\Gamma&:=\prod_{\ell=0}^{L}{H^{1/2}\left(\partial \Omega_\ell\right)},
\end{align*}
endowed with the product norms. The $\HH^{-1/2}_\Gamma \times \HH^{1/2}_\Gamma$ duality will be denoted 
$\langle\cdot,\cdot\rangle_\Gamma$. It extends the usual $L^2$ inner product, i.e., for $\bs u=(u_{\ell})_{\ell=0}^{L}$ and $\bs v=(v_{\ell})_{\ell=0}^{L}$
in $\prod_{\ell=0}^{L}{L^2(\partial \Omega_\ell)}$, it is given by
$$
\langle\bs u, \bs v\rangle_{\Gamma}
:=\sum_{\ell=0}^{L}{\int_{\partial \Omega_\ell} u_{\ell} {v_{\ell}}}.
$$
\item Diagonal operators
\begin{align*}
\prodtraceint, \prodtraceext, \prodtracejump{\,\cdot},
\prodtracemean{\,\cdot} & : \HH\to \HH^{1/2}_\Gamma,\\
\prodnormaltraceint, \prodnormaltraceext, \prodnormaltracejump{\,\cdot},
\prodnormaltracemean{\,\cdot}& : \HHdiv\to \HH^{-1/2}_\Gamma
\end{align*}
\item Single-trace spaces
\begin{subequations}
\begin{align}
\YY :&=\{ (\traceint\ell u)_{\ell=0}^L \,:\, u\in \hone\} \\
\nonumber
& = \{ \bs\psi \in \HH^{1/2}_\Gamma: \; \exists  u \in \hone, \,  
\bs\psi=\prodtraceint  u\},  \label{eq:def_YY}\\
  \XX:&=\{ (\normaltraceint\ell \bs v)_{\ell=0}^L\,:\, \bs v \in \hdiv\}\\
  \nonumber
 &= \{ \bs\phi \in \HH^{-1/2}_\Gamma: \; \exists \bs v \in \hdiv, \,  \bs\phi=\prodnormaltraceint \bs v\},
\end{align}
\end{subequations}
which are closed subspaces of $\HH^{1/2}_\Gamma$ and $\HH^{-1/2}_\Gamma$ respectively.
\end{enumerate}

While the problem is posed on the ``broken'' spaces $\HH$ and $\HHdiv$, 
the spaces $\XX$ and $\YY$ are introduced to enforce continuity conditions across interfaces $\partial \Omega_{\ell} \cap \partial \Omega_k$;
see Figure~\ref{fig:broken_interfaces}.
This is done following the ideas of~\cite{von_petersdorff}, but using notation analogous to \cite{claeys_hiptmair_2013}.
Note the slight abuse of notation in the second definition of both spaces, where the diagonal trace operators are used on a single function, which is assumed to be copied $L+1$ times.

\begin{lemma}[Restricting and gluing]
  \label{lemma:restrict_and_glue}
If $\bs u=(u_\ell)_{\ell=0}^L \in \HH$ satisfies $\prodtracejump{\bs u}\in \YY$ and $\prodtraceext{\bs u}\in \YY$, then the function $u:\R^d\to \R$ defined by $u|_{\Omega_\ell}:=u_\ell |_{\Omega_\ell}$ satisfies $u\in \hone$. Similarly, if $\bs v=(v_\ell)_{\ell=0}^L\in \HHdiv$ satisfies $\prodnormaltracejump{\bs v}\in \XX$ and $\prodnormaltraceext{\bs v}\in \XX$, then the function $v:\R^d\to\R^d$ defined by $v|_{\Omega_\ell}:=v_\ell|_{\Omega_\ell}$ satisfies $v\in \hdiv$.  
\end{lemma}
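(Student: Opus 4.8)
The statement is a ``broken-to-conforming'' gluing result, and the natural strategy is to reduce it to the standard characterization of $H^1(\R^d)$ (resp.\ $H(\mathrm{div},\R^d)$) as the subspace of $L^2$-functions whose piecewise-defined restrictions have matching Dirichlet (resp.\ normal) traces across each interface. The difficulty is purely bookkeeping: the skeleton $\Gamma$ is a union of the $\partial\Omega_\ell$, which may overlap on shared interfaces and at cross points, and each ``broken'' function $\bs u$ carries a priori \emph{two} independent traces from each side of each $\partial\Omega_\ell$. One has to show that the single function $u$ obtained by restriction is well-defined (i.e.\ the pieces $u_\ell|_{\Omega_\ell}$ genuinely agree a.e., which is automatic since the $\Omega_\ell$ are mutually disjoint) and then that it has no jump across $\Gamma$.

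\textbf{Key steps for the $H^1$ case.} First, recall the classical fact (a consequence of integration by parts, which I would cite or state as a sublemma) that a function $w\in L^2(\R^d)$ with $w|_{\Omega_\ell}\in H^1(\Omega_\ell)$ for every $\ell$ belongs to $H^1(\R^d)$ if and only if for every pair $\ell\neq k$ with $|\partial\Omega_\ell\cap\partial\Omega_k|>0$ the interior traces from $\Omega_\ell$ and from $\Omega_k$ coincide on $\partial\Omega_\ell\cap\partial\Omega_k$; equivalently, there is \emph{some} $\tilde u\in H^1(\R^d)$ with $\traceint\ell\tilde u=$ the trace of $u|_{\Omega_\ell}$ for all $\ell$. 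So it suffices to produce such a $\tilde u$. Now use the hypotheses: $\prodtraceext{\bs u}\in\YY$ means there is $u^{\mathrm{ext}}\in H^1(\R^d)$ with $\traceint\ell u^{\mathrm{ext}}=\traceext\ell u_\ell$ for every $\ell$ (here I use the definition \eqref{eq:def_YY} of $\YY$), and $\prodtracejump{\bs u}\in\YY$ gives $u^{\mathrm{jump}}\in H^1(\R^d)$ with $\traceint\ell u^{\mathrm{jump}}=\traceint\ell u_\ell-\traceext\ell u_\ell$. Set $\tilde u:=u^{\mathrm{jump}}+u^{\mathrm{ext}}\in H^1(\R^d)$. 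Then for every $\ell$, $\traceint\ell\tilde u=\traceint\ell u_\ell$. On the other hand, the function $u$ defined by $u|_{\Omega_\ell}=u_\ell|_{\Omega_\ell}$ lies in $L^2(\R^d)$ with $u|_{\Omega_\ell}\in H^1(\Omega_\ell)$ (restriction is bounded), and its one-sided trace into $\Omega_\ell$ equals $\traceint\ell u_\ell=\traceint\ell\tilde u$. Since these interior traces characterize $u$ and $\tilde u$ agrees with them from every side, $u=\tilde u$ a.e., hence $u\in H^1(\R^d)=\hone$.

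\textbf{The $H(\mathrm{div})$ case} is identical after replacing Dirichlet traces by weak normal traces $\normaltraceint\ell,\normaltraceext\ell$, the space $\YY$ by $\XX$, and the characterization above by the analogous one for $H(\mathrm{div},\R^d)$: $v\in L^2(\R^d)^d$ with $v|_{\Omega_\ell}\in H(\mathrm{div},\Omega_\ell)$ lies in $H(\mathrm{div},\R^d)$ iff the normal components match across interfaces, and equivalently iff there is a global $\tilde v\in\hdiv$ with $\normaltraceint\ell\tilde v=\normaltraceint\ell v_\ell$ for all $\ell$. The hypotheses $\prodnormaltraceext{\bs v}\in\XX$ and $\prodnormaltracejump{\bs v}\in\XX$ provide $v^{\mathrm{ext}},v^{\mathrm{jump}}\in\hdiv$ whose interior normal traces are $\normaltraceext\ell v_\ell$ and $\normaltraceint\ell v_\ell-\normaltraceext\ell v_\ell$ respectively, and $\tilde v:=v^{\mathrm{jump}}+v^{\mathrm{ext}}$ does the job.

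\textbf{Main obstacle.} The only genuine subtlety is the interface characterization invoked in the first step, i.e.\ that matching one-sided traces on \emph{all} the interfaces $\partial\Omega_\ell\cap\partial\Omega_k$ — including the behavior at cross points where three or more subdomains meet — is equivalent to global $H^1$ (resp.\ $H(\mathrm{div})$) regularity. I would handle this either by citing the standard density/integration-by-parts argument, or by avoiding the issue entirely: the argument above never needs to reason about interfaces directly, because it produces an explicit global competitor $\tilde u\in\hone$ from the single-trace hypotheses and only uses that a function in $L^2(\R^d)$ is determined a.e.\ by its restrictions to the $\Omega_\ell$ (which partition $\R^d$ up to the null set $\Gamma$) together with the fact that the interior traces of $u_\ell$ and $\tilde u$ agree. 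I would present it in this second form, since it sidesteps cross points cleanly; the one point to state carefully is why $u=\tilde u$ follows, namely that $(u-\tilde u)|_{\Omega_\ell}\in H^1(\Omega_\ell)$ has vanishing trace on $\partial\Omega_\ell$ is \emph{not} what we get — rather, $u$ and $\tilde u$ have the \emph{same} trace from inside each $\Omega_\ell$, and since $u|_{\Omega_\ell}=u_\ell|_{\Omega_\ell}$ by construction while $\tilde u\in H^1(\R^d)$ by construction, comparing $L^2$ representatives on the partition $\{\Omega_\ell\}$ forces $u=\tilde u$.
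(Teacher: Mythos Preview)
Your reduction to $\prodtraceint\bs u\in\YY$ (by adding the jump and exterior liftings) is exactly the paper's argument, which simply writes ``the conditions imply $\prodtraceint\bs u\in\YY$, from where the result follows using the definition of $\YY$.''

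However, your final step contains a genuine error. You claim $u=\tilde u$ a.e., arguing that ``comparing $L^2$ representatives on the partition $\{\Omega_\ell\}$ forces $u=\tilde u$.'' This is false: $u|_{\Omega_\ell}$ and $\tilde u|_{\Omega_\ell}$ are two $H^1(\Omega_\ell)$ functions with the same trace on $\partial\Omega_\ell$, but they can differ by any element of $H^1_0(\Omega_\ell)$. The liftings $u^{\mathrm{jump}},u^{\mathrm{ext}}$ from the definition of $\YY$ are far from unique, so there is no reason $\tilde u$ should coincide with the particular piecewise function $u$ you built.

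The correct completion is precisely the interface characterization you state at the outset and then try to sidestep: once you know the interior traces of $u$ from each $\Omega_\ell$ agree with those of a global $H^1$ function $\tilde u$, the traces of $u$ match across every shared interface $\partial\Omega_\ell\cap\partial\Omega_k$ (because $\tilde u$'s do), and the standard integration-by-parts argument against $C_c^\infty(\R^d)$ test functions gives $u\in H^1(\R^d)$. Your worry about cross points is unfounded: the argument tests against smooth functions on $\R^d$ and only uses that $\Gamma$ has Lebesgue measure zero and that each $\partial\Omega_\ell$ is Lipschitz; cross points form a set of zero surface measure and never enter. So drop the $u=\tilde u$ claim and conclude directly from your own ``equivalently'' clause.
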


\begin{proof} The conditions imply  $\prodtraceint \bs u\in \YY$ and $\prodnormaltrace{\bs v}\in \XX$, from where the result follows using the definition of $\YY$ and $\XX$.
\end{proof}

We now take two closed subspaces $\XX_h \subseteq \XX$ and $\YY_h \subseteq \YY$. When $\XX_h$ and $\YY_h$ are finite dimensional they will play the role of approximation spaces for a Galerkin semidiscretization in space of an equivalent time domain boundary integral formulation.
In order to succinctly write down Galerkin orthogonalities, we define the polar sets 
\begin{align*}
  \XX_h^{\circ}&:=\{ \bs\psi \in \HH^{1/2}_\Gamma: \;\; \dualproduct{\bs\mu}{\bs\psi}=0 \;\forall \bs\mu \in \XX_h \}, \\
  \YY_h^{\circ}&:=\{ \bs\phi \in \HH^{-1/2}_\Gamma: \dualproduct{\bs\phi}{\bs\eta}=0 \;\forall \bs\eta \in \YY_h \}.
\end{align*}
When $\XX_h=\XX$ and $\YY_h=\YY$ it can be proved (see \cite[Prop.~{2.1}]{claeys2011single})
that $\XX^\circ=\YY$ and $\YY^\circ=\XX.$ In particular
\begin{equation}\label{XYpolar}
\dualproduct{\bs\phi}{\bs\psi}=0 \qquad \forall \bs\psi\in \YY, \quad \bs\phi\in \XX.
\end{equation}
This also implies that $\XX_h\subseteq \XX=\YY^\circ\subseteq \YY_h^\circ$ and likewise $\YY_h\subseteq \XX_h^\circ$.

We are finally ready to introduce the multiply overlapped transmission problem that is the object of the first part of this work. The data are functions $\bs\xi^0:[0,\infty)\to \HH^{1/2}_\Gamma$ and $\bs\xi^1:[0,\infty)\to \HH^{-1/2}_\Gamma$ and we look for  $\bs u^h=(u^h_\ell)_{\ell=0}^L:[0,\infty)\to \HH$ and $\bs w^h=(w^h_\ell)_{\ell=0}^L:[0,\infty)\to \HHdiv$ satisfying the first order system
\begin{subequations}\label{eq:probMain}
\begin{equation}
\label{eq:probMain:a}
\dot u^h_\ell(t)=c_\ell^2 \nabla\cdot w^h_\ell(t),
\qquad
\dot{ w}^h_\ell(t)=\kappa_\ell\nabla u^h_\ell(t),
\qquad \forall t>0, \qquad \ell=0,\ldots,L
\end{equation}
(the differential operators in space are distributional derivatives in $\R^d\setminus\partial\Omega_\ell$), four transmission conditions for all $t\ge 0$
\begin{alignat}{6}
\prodtracejump{\bs u^h}(t)+\bs\xi^0(t)\in \YY_h, & \qquad &
	\prodnormaltracejump{\bs w^h}(t)+\bs\xi^1(t)\in \XX_h, \label{eq:probMain:b}\\
\prodtraceext{\bs u^h}(t)\in \XX_h^\circ, & & 
\prodnormaltraceext{\bs w^h}(t) \in \YY_h^\circ,  \label{eq:probMain:c}
\end{alignat}
and vanishing initial conditions
\begin{equation}
\bs u^h(0)=0, \qquad \bs w^h(0)=0.
\end{equation}
\end{subequations}
The following result clarifies the relation between \eqref{eq:probMain} and \eqref{eq:1}. In what follows we will write
\[
(\partial_t^{-1} f)(t):=\int_0^t f(\tau)\mathrm d\tau,
\]
with integration in the sense of Bochner in the space where $f$ takes values. The characteristic function of the domain $\Omega_\ell$ will be denoted $\chi_{\Omega_\ell}$. Before we state the result, and foreseeing possible confusion with notation in existing literature, let us emphasize that the interior traces from the unbounded domain $\Omega_0$ are coming from inside this domain and the normal vector points towards the scatterer in this case.

\begin{proposition}
\label{prop:2.2}
Let 
\[
\bs\xi^0:=(\traceint0 u^{\mathrm{inc}},0,\ldots,0),
	\qquad
\bs\xi^1:=(\kappa_0 \normaltraceint0\nabla \partial_t^{-1} u^{\mathrm{inc}},0,\ldots,0).
\]
If $(\bs u^h,\bs w^h)$ is a solution to \eqref{eq:probMain} for the choice $\XX_h=\XX$, $\YY_h=\YY$, then $u:[0,\infty)\to H^1(\R^d\setminus\Gamma)$ defined by $u(t)|_{\Omega_\ell}:=u^h_\ell(t)|_{\Omega_\ell}$ for all $t\ge 0$ and $\ell \in \{0,\ldots,L\}$ is a solution to \eqref{eq:1}. Reciprocally, if $u$ solves \eqref{eq:1} and we define
\[
u^h_\ell:=\chi_{\Omega_\ell} u, \qquad  w^h_\ell:=\kappa_\ell \nabla \partial_t^{-1} u^h_\ell, \qquad \ell=0,\ldots,L,
\]
then $((u^h_\ell)_{\ell=0}^L,(w^h_\ell)_{\ell=0}^L)$ is a solution to \eqref{eq:probMain}.
\end{proposition}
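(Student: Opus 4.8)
\emph{Approach.} The statement is an equivalence between a first‑order system on ``broken'' spaces and the second‑order scattering problem, so the plan is to verify, line by line, that each ingredient of \eqref{eq:probMain} follows from \eqref{eq:1} and conversely; the two directions are mirror images, so I would present them in parallel. Two facts carry all the conceptual weight. First, the restrict–and–glue Lemma~\ref{lemma:restrict_and_glue}: once we know that the tuple of interior (normal) traces of a piecewise $H^1$ (resp.\ $H(\mathrm{div})$) field lies in the single‑trace space $\YY$ (resp.\ $\XX$), the glued field is globally $H^1$ (resp.\ $H(\mathrm{div})$), and this is exactly what the transmission conditions in \eqref{eq:1b}--\eqref{eq:1c} amount to. Second, the polarity $\XX^\circ=\YY$, $\YY^\circ=\XX$ from \eqref{XYpolar}, which lets us convert the Galerkin‑type conditions \eqref{eq:probMain:c} into membership in $\YY$ resp.\ $\XX$. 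Throughout, $u^{\mathrm{inc}}$ is the function extended by $0$ on $\R^d\setminus\Omega_0$, so that on $\Omega_\ell$ with $\ell\ge1$ all its traces vanish; consequently $\bs\xi^0$ is the tuple of interior traces of $u^{\mathrm{inc}}$ (zero in the blocks $\ell\ge1$) and $\bs\xi^1$ the tuple of interior normal traces of $\kappa_0\nabla\partial_t^{-1}u^{\mathrm{inc}}$.

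\emph{From \eqref{eq:probMain} to \eqref{eq:1}.} Given a solution $(\bs u^h,\bs w^h)$ with $\XX_h=\XX$, $\YY_h=\YY$, set $u|_{\Omega_\ell}:=u^h_\ell|_{\Omega_\ell}$; since $\R^d\setminus\Gamma$ is the disjoint union of the $\Omega_\ell$, membership $u(t)\in H^1(\R^d\setminus\Gamma)$ is immediate. Integrating the second identity in \eqref{eq:probMain:a} with the zero initial condition gives $w^h_\ell=\kappa_\ell\nabla\partial_t^{-1}u^h_\ell$; plugging this into the first identity and differentiating in $t$ yields $c_\ell^{-2}\ddot u^h_\ell=\kappa_\ell\laplace u^h_\ell$ on $\Omega_\ell$, which is $c^{-2}\ddot u=\nabla\cdot(\kappa\nabla u)$ on $\R^d\setminus\Gamma$ because $c,\kappa$ are constant on each $\Omega_\ell$; and $u(0)=0$, $\dot u(0)=c_\ell^2\nabla\cdot w^h_\ell(0)=0$ are read off directly. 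For \eqref{eq:1b}, decompose $\prodtraceint\bs u^h=\prodtracejump{\bs u^h}+\prodtraceext{\bs u^h}$: by \eqref{eq:probMain:b} the first summand lies in $\YY$ after adding $\bs\xi^0$, and by \eqref{eq:probMain:c} the second lies in $\XX^\circ=\YY$, hence $\prodtraceint\bs u^h+\bs\xi^0\in\YY$; but this tuple is precisely the family of interior traces on the $\partial\Omega_\ell$ of the piecewise $H^1$ field $u+u^{\mathrm{inc}}$ (the $\ell$‑block being $\traceint\ell(u^h_\ell+u^{\mathrm{inc}})$, which reduces to $\traceint\ell u^h_\ell$ for $\ell\ge1$), so the argument of Lemma~\ref{lemma:restrict_and_glue}, applied on the ball $B$ and using that traces are local, gives $u+u^{\mathrm{inc}}\in H^1(B)$. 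Condition \eqref{eq:1c} is obtained identically on the $\hdiv$ side: $\prodnormaltraceint\bs w^h+\bs\xi^1=(\prodnormaltracejump{\bs w^h}+\bs\xi^1)+\prodnormaltraceext{\bs w^h}\in\XX+\YY^\circ=\XX$ by \eqref{eq:probMain:b}--\eqref{eq:probMain:c}, and this is the tuple of interior normal traces of the glued field $\kappa\nabla\partial_t^{-1}(u+u^{\mathrm{inc}})$, so that field lies in $\hdiv[B]$; differentiating in $t$ (which commutes with the spatial operators) gives $\kappa\nabla(u+u^{\mathrm{inc}})\in\hdiv[B]$.

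\emph{From \eqref{eq:1} to \eqref{eq:probMain}.} Conversely, given $u$ solving \eqref{eq:1}, put $u^h_\ell:=\chi_{\Omega_\ell}u$ and $w^h_\ell:=\kappa_\ell\nabla\partial_t^{-1}u^h_\ell$. Then $u^h_\ell\in\hone[\R^d\setminus\partial\Omega_\ell]$, and because $u^h_\ell$ and hence $w^h_\ell$ vanish on $\R^d\setminus\overline{\Omega_\ell}$ one has $\traceext\ell u^h_\ell=\normaltraceext\ell w^h_\ell=0$, so \eqref{eq:probMain:c} is trivially satisfied. Using the wave equation on $\Omega_\ell$ together with $u(0)=0$ gives $\nabla\cdot w^h_\ell|_{\Omega_\ell}=\kappa_\ell\laplace\partial_t^{-1}u|_{\Omega_\ell}=c_\ell^{-2}\dot u|_{\Omega_\ell}\in L^2(\Omega_\ell)$, whence $w^h_\ell\in\hdiv[\R^d\setminus\partial\Omega_\ell]$ and both identities of \eqref{eq:probMain:a} hold (as distributions in $t$, or classically if $u$ is smooth enough in time), as do the initial conditions. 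Finally, since $\traceext\ell u^h_\ell=0$ we get $\prodtracejump{\bs u^h}+\bs\xi^0=\prodtraceint(u+u^{\mathrm{inc}})\in\YY$ from \eqref{eq:1b} (and locality of traces), and likewise $\prodnormaltracejump{\bs w^h}+\bs\xi^1=\prodnormaltraceint\!\left(\kappa\nabla\partial_t^{-1}(u+u^{\mathrm{inc}})\right)\in\XX$ from \eqref{eq:1c}, because $\partial_t^{-1}$ preserves $H(\mathrm{div})$. Hence \eqref{eq:probMain:b} holds and $((u^h_\ell)_\ell,(w^h_\ell)_\ell)$ solves \eqref{eq:probMain}.

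\emph{Main obstacle.} There is no single hard estimate here; the work is almost entirely bookkeeping. The delicate points I would be careful about are: keeping the interior/exterior orientation on $\partial\Omega_0$ consistent (the interior trace comes from the unbounded medium, the normal points into the scatterer); tracking how the extension by zero of $u^{\mathrm{inc}}$ interacts with the trace operators so that $\bs\xi^0$ and $\bs\xi^1$ land exactly in the slots the single‑trace spaces require; and passing between the ball $B$ appearing in \eqref{eq:1} and the global spaces $\hone$, $\hdiv$ underlying $\YY,\XX$, which is legitimate by locality of traces together with an extension/cutoff near $\Gamma$. One should also keep a minimal eye on the regularity in $t$ needed to move between the first‑order system and the second‑order wave equation and to differentiate the $H(\mathrm{div})$ identity, but this is routine given the smoothness of $u^{\mathrm{inc}}$.
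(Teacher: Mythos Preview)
Your proposal is correct and follows essentially the same approach as the paper: derive the PDE by combining the two first-order equations, and verify the transmission conditions \eqref{eq:1b}--\eqref{eq:1c} via the restrict-and-glue Lemma~\ref{lemma:restrict_and_glue} together with the polarity $\XX^\circ=\YY$, $\YY^\circ=\XX$. The paper in fact only writes out the direction \eqref{eq:probMain}$\Rightarrow$\eqref{eq:1} (handling the localization to $B$ by multiplying $u^h_0$ with a compactly supported cutoff $\chi_B$ equal to $1$ on $B$, rather than invoking locality of traces as you do), and leaves the converse implicit; your treatment of both directions is more complete but otherwise matches.
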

\begin{proof}
  We only show that if $(\bs u^h,\bs w^h)$ is a solution to \eqref{eq:probMain},
    then we can reconstruct a solution $u$ to \eqref{eq:1} by setting $u|_{\Omega_\ell} = u_\ell^h$.
    On each subdomain $\Omega_\ell$, differentiating the first equality in~\eqref{eq:probMain:a}
    and inserting the second one gives:
    $$
    \ddot{u}^h_{\ell}=c_{\ell}^2 \, \fdiv  \dot{w}^h_{\ell}
    =c_{\ell}^2 \fdiv \big( \kappa_{\ell} \nabla u_{\ell}^{h}\big).
    $$
    Thus, the reconstructed $u$ solves the PDE in $\R^d \setminus \Gamma$. To see the
    jump condition~\eqref{eq:1b}, we consider
    the function $\widetilde{u}^h_{0}:=u^h_0 + u^{\mathrm{inc}} \chi_{B}$, 
    where $\chi_{B}$ is a cutoff function with compact support which is equal to $1$ in the ball of \eqref{eq:1b}, 
    and the functions $\widetilde{u}^h_{\ell}:=u_{\ell}^h$ for $\ell \geq 1$.
    By~\eqref{eq:probMain:b} and \eqref{eq:probMain:c}, 
    we can apply Lemma~\ref{lemma:restrict_and_glue} to see that the reconstructed function $\widetilde{u}$ 
    defined by $\widetilde{u}|_{\Omega_\ell}:= u_\ell^h$ is in $H^1(\R^d)$.
    Since $u + u^{\mathrm{inc}}$ and $\widetilde{u}$ coincide on the ball $B$ the jump condition~\eqref{eq:1b} follows. 
An analogous computation shows \eqref{eq:1c}.
\end{proof}

\subsection{A particular construction of the approximation spaces}
\label{sect:construction}
Consider the two or three dimensional case, i.e., $d=2$ or $d=3$.
Let us assume that all domains $\Omega_\ell$ are Lipschitz polygons in $\R^2$
or polyhedra in $\R^3$. We can thus separate $\Gamma$ into a finite collection of relatively open flat surfaces (resp. segments) $\Gamma_1,\ldots,\Gamma_M$ so that for all $\ell$, there exists an index set $\mathcal I(\ell)\subseteq \{1,\ldots,M\}$ such that
\[
\partial\Omega_\ell=\cup\{ \overline{\Gamma_i}\,:\, i\in \mathcal I(\ell)\}.
\]
We consider a conforming triangulation of $\Gamma$ that respects the subdivision of $\Gamma$ into the subsets $\Gamma_i$. For instance, we can start with a regular {\em \`a la Ciarlet} partition of the interior of $\overline{\Omega_1}\cup\ldots\cup\overline{\Omega_L}$ into open tetrahedra such that no tetrahedral element intersects $\Gamma$. In particular, this means that $\{ T\in \mathcal T_h\,:\, T\subseteq \Omega_\ell\}$ provides a partition of $\Omega_\ell$ for $\ell\ge 1$. Let then $\Gamma_h$ be the triangulation of $\Gamma$ induced by $\mathcal T_h$. We now consider the following finite dimensional spaces of functions defined on the skeleton:
\begin{alignat*}{6}
\mathcal P_h &:=\{ \phi_h:\Gamma \to \mathbb R\,:\, \phi_h|_e\in
\mathcal P_{\rho}(e) \quad \forall e\in \Gamma_h\},\\
\mathcal Q_h &:=\{\psi_h \in \mathcal C(\Gamma)\,:\, \psi_h|_e\in \mathcal P_{\rho+1}(e)\quad\forall e\in \Gamma_h\},
\end{alignat*}
where $\mathcal P_\rho(e)$ is the space of polynomials of degree up to $\rho$ defined on (tangential coordinates of) $e$. We can easily define
\begin{subequations}
  \label{eq:particular_construction_of_spaces}
\begin{align}  
\YY_h:=\{ (\psi_h|_{\partial\Omega_\ell})_{\ell=0}^L\,:\, \psi_h\in \mathcal Q_h\}\subseteq \YY,
\end{align}
which is isomorphic to $\mathcal Q_h$. To define $\XX_h$, we introduce sign functions handling orientation. For $\ell\ge 0$,  $s_\ell:\Gamma \to \{-1,0,1\}$  is constant on each $\Gamma_i$, $s_\ell\equiv 0$ outside $\partial\Omega_\ell$, and $|s_\ell|\equiv 1$ on $\partial\Omega_\ell$. We then assume that common faces have opposite signs, i.e.,
for $\ell \neq j$:
\[
s_\ell+s_j \equiv 0 \qquad \mbox{on $\Gamma_i$ for all $i\in \mathcal{I}(\ell)\cap \mathcal{I}(j)$}.
\]
These sign functions are easy to construct as follows: we assign a normal vector to each $\Gamma_i$ and then write $s_\ell|_{\Gamma_i}=1$ if the assigned normal is exterior to $\partial\Omega_\ell$ and $s_\ell|_{\Gamma_i}=-1$ otherwise. With these sign functions we can finally define
\begin{align}
\XX_h:=\{ (s_\ell\phi_h|_{\partial\Omega_\ell})_{\ell=0}^L\,:\,
\phi_h\in \mathcal P_h\},
\end{align}
\end{subequations}
which is isomorphic to $\mathcal P_h$. The approximation properties of these spaces are inherited from the approximations of $\mathcal{Q}_h$ and $\mathcal{P}_h$.
The details are in the following proposition:
\begin{proposition}
  Let all $\Omega_{\ell}$, $\ell=1,\ldots,L$, be Lipschitz polygons or polyhedrons in 2d or 3d.
The spaces $\XX_h$ and $\YY_h$ defined in~\eqref{eq:particular_construction_of_spaces} have the following approximation property for every integer $0\le r\le \rho$:
  \begin{subequations}
    \begin{align}
      \inf_{\bs \phi^h \in \XX_h}{\norm{\bs \phi - \bs \phi^h}_{\HH^{-1/2}}}
      &\leq C h^{r+3/2} \sum_{\ell=0}^{L}{\norm{\phi_{\ell}}_{\hppw{r+1}\left(\partial \Omega_{\ell}\right)}}, \\
      \inf_{\bs \psi^h \in \YY_h}{\norm{\bs \psi - \bs \psi^h}_{\HH^{1/2}}}
      &\leq C h^{r+3/2} \sum_{\ell=0}^{L}{\norm{\psi_{\ell}}_{\hppw{r+2}\left(\partial \Omega_{\ell}\right)}}
    \end{align}
  \end{subequations}
  for all $\bs \phi=\left(\phi_{\ell}\right)_{\ell=0}^{L} \in \XX$ with $\phi_{\ell}\in \hppw{r+1}\left(\partial \Omega_{\ell}\right)$
  and all $\bs \psi=\left(\psi_{\ell}\right)_{\ell=0}^{L} \in \YY$ with $\psi_{\ell}\in \hppw{r+2}\left(\partial \Omega_{\ell}\right)$
  and with the additional restriction that the lifting $u \in H^1(\R^d)$ from \eqref{eq:def_YY} is a continuous function on $\Gamma$.
  Here $\hppw{\ell}$ denotes the space of piecewise $H^{\ell}$-functions on each face (see
  \cite[Definition 4.8.48]{book_sauter_schwab} for details).
\end{proposition}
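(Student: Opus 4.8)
The plan is to dispose of the global consistency requirement (that $\XX_h$ and $\YY_h$ are parametrised by a \emph{single} scalar function on the whole skeleton) right away, and then reduce each estimate, separately for each $\ell$, to textbook approximation results for piecewise polynomials on the single surface $\partial\Omega_\ell$ carrying the induced mesh $\Gamma_h|_{\partial\Omega_\ell}$ (see, e.g., \cite{book_sauter_schwab}). The fractional trace norms are entered only at the very end, through an Aubin--Nitsche duality argument and an interpolation inequality from the integer-order spaces $L^2$ and $H^1$; in this way neither the face-jumping sign functions $s_\ell$ nor the splitting of $\Gamma$ into subsurfaces ever has to be pushed through a fractional-order norm.

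\textbf{The $\XX_h$-estimate.} Since $s_\ell\in\{-1,+1\}$ is constant on every flat face, hence on every $e\in\Gamma_h$, and since $\bs\phi\in\XX$ means the normal components match across a shared face $\Gamma_i\subseteq\partial\Omega_\ell\cap\partial\Omega_j$ (so $\phi_\ell|_{\Gamma_i}=-\phi_j|_{\Gamma_i}$ while $s_\ell|_{\Gamma_i}=-s_j|_{\Gamma_i}$, whence $s_\ell\phi_\ell|_{\Gamma_i}=s_j\phi_j|_{\Gamma_i}$), the function $\varphi_h\in\mathcal P_h$ given elementwise by $\varphi_h|_e:=\Pi_e(s_\ell\phi_\ell|_e)$ — with $\Pi_e$ the $L^2(e)$-projection onto $\mathcal P_\rho(e)$ and $\ell$ any index with $e\subseteq\partial\Omega_\ell$ — is unambiguously defined. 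Pulling the per-element constant $s_\ell$ through the linear projection shows that $s_\ell\varphi_h|_{\partial\Omega_\ell}$ is exactly the elementwise $L^2$-projection $\Pi\phi_\ell$ of $\phi_\ell$, so I take $\bs\phi^h:=(s_\ell\varphi_h|_{\partial\Omega_\ell})_{\ell=0}^L\in\XX_h$, and it remains to bound $\norm{\phi_\ell-\Pi\phi_\ell}_{H^{-1/2}(\partial\Omega_\ell)}$ for each $\ell$. This is the standard duality estimate on $\partial\Omega_\ell$: for $v\in H^{1/2}(\partial\Omega_\ell)$ one writes $\langle\phi_\ell-\Pi\phi_\ell,v\rangle=\langle\phi_\ell-\Pi\phi_\ell,v-\Pi v\rangle$ and combines $\norm{\phi_\ell-\Pi\phi_\ell}_{L^2}\le Ch^{r+1}\norm{\phi_\ell}_{\hppw{r+1}(\partial\Omega_\ell)}$ (valid because $r+1\le\rho+1$) with $\norm{v-\Pi v}_{L^2}\le Ch^{1/2}\norm{v}_{H^{1/2}(\partial\Omega_\ell)}$, giving the local rate $h^{r+3/2}$. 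Summing over $\ell$ and bounding the $\ell^2$-sum by the $\ell^1$-sum of the statement yields the claimed inequality.

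\textbf{The $\YY_h$-estimate.} Here the continuity hypothesis enters. Writing $\psi_\ell=\traceint{\ell}u$ with $u\in\hone$ continuous on $\Gamma$, the function $\psi:=u|_\Gamma$ lies in $\mathcal C(\Gamma)$ and satisfies $\psi|_{\partial\Omega_\ell}=\psi_\ell\in\hppw{r+2}(\partial\Omega_\ell)$, so $\psi$ is elementwise $H^{r+2}$ on $\Gamma_h$. Because every $e\in\Gamma_h$ has dimension at most $2<2(r+2)$, we have $H^{r+2}(e)\hookrightarrow\mathcal C(\overline{e})$, so the nodal $\mathcal P_{\rho+1}$-Lagrange interpolant $\psi_h$ of $\psi$ is well defined; it belongs to $\mathcal Q_h$ precisely because $\psi$ is continuous on all of $\Gamma$ (this is where the hypothesis is needed, to produce a \emph{single} $\psi_h\in\mathcal Q_h$ rather than only per-subdomain interpolants), and on each $\partial\Omega_\ell$ it equals the nodal interpolant of $\psi_\ell$. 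The error is controlled purely elementwise: $\norm{\psi_\ell-\psi_h}_{L^2(\partial\Omega_\ell)}\le Ch^{r+2}\norm{\psi_\ell}_{\hppw{r+2}(\partial\Omega_\ell)}$ and $\norm{\psi_\ell-\psi_h}_{H^1(\partial\Omega_\ell)}\le Ch^{r+1}\norm{\psi_\ell}_{\hppw{r+2}(\partial\Omega_\ell)}$ (using $r+2\le\rho+2$). The interpolation inequality $\norm{w}_{H^{1/2}(\partial\Omega_\ell)}\le C\norm{w}_{L^2(\partial\Omega_\ell)}^{1/2}\norm{w}_{H^1(\partial\Omega_\ell)}^{1/2}$ then gives the local rate $h^{r+3/2}$, and summing over $\ell$ with $\bs\psi^h:=(\psi_h|_{\partial\Omega_\ell})_{\ell=0}^L\in\YY_h$ completes the argument.

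\textbf{Main obstacle.} The only genuinely delicate point is the interaction of the fractional trace norms with (i) multiplication by the face-jumping sign functions $s_\ell$ and (ii) restriction of functions from $\Gamma$ to the subsurfaces $\partial\Omega_\ell$; both become awkward if one tries to obtain a best approximation on all of $\Gamma$ and then transfer it. The remedy, taken above, is to stay on each $\partial\Omega_\ell$ from the outset, exploit that $s_\ell$ is constant on each mesh element so that it commutes with the elementwise $L^2$-projection, and touch $H^{\pm1/2}$ only via duality and interpolation from the safe integer-order spaces. A minor additional nuisance is that the nodal interpolation step needs the elementwise embedding $H^{r+2}(e)\hookrightarrow\mathcal C(\overline{e})$; this is exactly why the $H^{1/2}$-estimate asks for one order more of regularity ($\hppw{r+2}$ rather than $\hppw{r+1}$) and why the continuity of the lifting $u$ on $\Gamma$ cannot be dropped.
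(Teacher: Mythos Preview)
Your proof is correct and follows essentially the same route as the paper: the same approximants are chosen (elementwise $L^2$-projection for $\XX_h$, nodal interpolant for $\YY_h$), the sign compatibility of $\XX$ and the continuity assumption on the lifting are used in the same way to ensure the approximants are well defined in the constrained spaces, and the passage to $H^{-1/2}$ is by the same duality trick. The only cosmetic difference is in the $\YY_h$ part: the paper cites the ready-made $H^{1/2}$ nodal-interpolation estimate \cite[Thm.~4.3.22]{book_sauter_schwab} directly, whereas you obtain it by combining the standard $L^2$ and $H^1$ elementwise bounds with the interpolation inequality $\|w\|_{H^{1/2}}\le C\|w\|_{L^2}^{1/2}\|w\|_{H^1}^{1/2}$; both yield the same rate.
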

\begin{proof}
  We start with the estimate for $\bs \phi$.  
  For each $i \in \{0,\dots,M\}$, we pick a subdomain $\Omega_{\ell_{i}}$, 
  such that $i \in \mathcal{I}\left(\ell_i\right)$, and set $\phi^h|_{\Gamma_i}:=s_{\ell_i}\Pi_{i}\phi_{\ell}|_{\Gamma_i}$,
  where $\Pi_{i} \phi_{\ell}$
  is the orthogonal projection with respect to the $L^2$-product on $\Gamma_i$ onto the set of discontinuous piecewise polynomials. Since 
  $\mathcal{P}_h$ is only required to be $L^2$-conforming, this defines a function in $\XX_h$
  via $\bs \phi^h:=(\phi^h_{\ell}):=\left(s_{\ell} \phi^{h}|_{\partial \Omega_{\ell}}\right)_{\ell=0}^{L}$.
  It follows from standard estimates (see, e.g., \cite[Thm.~{4.3.20}]{book_sauter_schwab}),
  $$
  \norm{\phi_{\ell_i} - \phi^h_{\ell}}_{L^2(\Gamma_i)}\lesssim h^{r+1} \norm{\phi_{\ell_i}}_{H^{r+1}(\Gamma_i)}.
  $$
  Since the functions $\phi \in \XX$ agree on shared interfaces  up to the changed sign,
  i.e. $\phi_{\ell_i}=s_{\ell} s_{\ell_i} \phi_{\ell}$, it is easy to see that for arbitrary $\ell=0,\dots,L$ 
  \begin{align*}    
    \norm{\phi_{\ell} - \phi^h_{\ell}}_{L^2(\partial \Omega_{\ell})}^2
    &\lesssim \sum_{i=1}^{M}{ \norm{ \phi_{\ell_i} - \Pi_i \phi_{\ell_i}}_{L^2(\Gamma_i)}^2}
    \lesssim h^{r+1} \sum_{i=1}^{M}{\norm{\phi_{\ell_i}}_{H^{r+1}(\Gamma_i)}^2}.
    %\lesssim h^{r+1} \sum_{\ell=0}^{L}{\norm{\phi_{\ell}}_{\hppw{r+1}(\partial \Omega_{\ell})}},    
  \end{align*}

  %where in the last step we used that for $\bs \phi \in \HH^{-1/2}$, the components of two subdomains sharing a face $\Gamma_i$ differ only by a sign,
  %and $s_{\ell}=0$ if the components do not share a face. By definition of $\phi_h$ we can therefore estimate
  %\begin{align*}
  %  \norm{\phi_{\ell} - \phi^h_{\ell}}_{L^2(\partial \Omega_{\ell})}
  %  &\lesssim \sum_{i=1}^{M}{ \norm{\phi_{\ell_i} - \Pi_i \phi_{\ell_i}}_{L^2(\Gamma_i)}}
  %  \lesssim h^{r+1} \sum_{i=1}^{M}{\norm{\phi_{\ell_i}}_{H^{r+1}(\Gamma_i)}} \\
  %  &\lesssim h^{r+1} \sum_{\ell=0}^{L}{\norm{\phi_{\ell}}_{\hppw{r+1}(\partial \Omega_{\ell})}},
  %\end{align*}
  %where we used  the approximation property of the BEM space (see \cite[Theorem 4.3.20]{book_sauter_schwab}).

  To get an estimate in the $\HH^{-1/2}$-norm, we can use  a standard duality argument, (see \cite[Thm.~{4.3.20}]{book_sauter_schwab}),
  using the fact that $\phi_{\ell} - \phi^h_{\ell}$ is orthogonal to the piecewise polynomials on each face gaining
  an extra factor $\sqrt{h}$ in the process.
  %We calculate for $\ell\in \{0,\dots,L\}$,
  %using the fact that $\phi_{\ell} - \phi^h_{\ell}$ is orthogonal to the piecewise polynomials on each face:
  %\begin{align*}
  %  \norm{\phi_{\ell} - \phi^h_{\ell}}_{H^{-1/2}(\partial \Omega_{\ell})}
  %  &=\!\!\sup_{\substack{\chi \in H^{1/2}(\partial \Omega_{\ell}) \\ \chi \neq 0}}
  %  {\!\frac{\bdryproductl{\phi_{\ell} - \phi^h_{\ell}}{\chi}}{\norm{\chi}_{H^{1/2}(\partial \Omega_{\ell})}}}
    %= \sup_{0\neq \chi \in H^{1/2}(\partial \Omega_{\ell})} \sum_{i \in \mathcal{I}(\ell)}
    %  {\frac{\innerproduct[\Gamma_i]{\phi_{\ell} - \phi^h_{\ell}}{\chi}}{\norm{\chi}_{H^{1/2}(\partial \Omega_{\ell})}}} \\
  %  =\!\!\!\sup_{\substack{\chi \in H^{1/2}(\partial \Omega_{\ell}) \\ \chi \neq 0}} \sum_{i \in \mathcal{I}(\ell)}
  %    {\!\!\frac{\innerproduct[\Gamma_i]{\phi_{\ell} - \phi^h_{\ell}}{\chi - \Pi_i \chi}}{\norm{\chi}_{H^{1/2}(\partial \Omega_{\ell})}}}  \\
    %&\lesssim \sqrt{h}\sup_{0\neq \chi \in H^{1/2}(\partial \Omega_{\ell})} \sum_{i \in \mathcal{I}(\ell)}
    %  {\frac{\norm{\phi_{\ell} - \phi^h_{\ell}}_{L^2(\Gamma_i)} \norm{\chi}_{H^{1/2}(\Gamma_i)}}{\norm{\chi}_{H^{1/2}(\partial \Omega_{\ell})}}}
  %  &\lesssim \sqrt{h} \norm{\phi_{\ell} - \phi^h_{\ell}}_{L^2(\partial \Omega_{\ell})}.
  %\end{align*}
  
  For estimating $\bs \psi$, we note that our assumptions on the lifting $u$ implies  that the functions $\psi_{\ell}$ are continuous
  on $\partial \Omega_{\ell}$, most notably at the boundary of the facets.
  Therefore, we may employ a nodal interpolation operator 
 $I_{\ell}$.  It is well known that  if $d\leq 3$
  $$\norm{\psi_{\ell} - I_{\ell} \psi_{\ell}}_{H^{1/2}(\partial \Omega_{\ell})}\lesssim h^{r+3/2} \norm{\psi_{\ell}}_{\hppw{r+2}(\partial \Omega_{\ell})},$$
  see~\cite[Thm.~{4.3.22}]{book_sauter_schwab}.  
  Since the functions $\psi_{\ell}$, $\psi_{k}$ are assumed to be traces of a continuous function $u$,
  they must coincide on $\partial \Omega_{\ell} \cap \partial \Omega_k$.
  This means that the interpolated functions $I_{\ell} \psi_{\ell}$ also coincide on $\partial \Omega_{\ell} \cap \partial \Omega_k$
  or $(I_{\ell} \psi_{\ell})_{\ell=0}^{L} \in \XX_h$.
\end{proof}

\subsection{Towards an analyzable form}
\label{sect:analyzable_form}
For the sake of analysis (also of the forthcoming time discretization), we  find it advantageous to introduce some further notation. In order to not
overwhelm notation, the norms and inner products of $L^2(\Omega)$ and $[L^2(\Omega)]^d$ will be equally denoted $\|\cdot\|_{L^2(\Omega)}$ and $(\cdot,\cdot)_{L^2(\Omega)}$ respectively. We now consider:
\begin{enumerate}[(a)]
\item The product spaces $\LL:=L^2(\R^d)^{L+1}$ and $\bsLL:=(L^2(\R^d)^d)^{L+1}$.
\item The natural componentwise differential operators
$\nabla: \HH \to \bsLL$ and $\nabla\cdot :\HHdiv\to \LL$. 
\item The diagonal scaling operators $\bs T_{c^2}:\LL \to \LL$ and $\bs T_\kappa:\bsLL \to \bsLL $ given by
\[
\bs T_{c^2}(u_\ell)_{\ell=0}^L= \big(c_\ell^2\, u_\ell\big)_{\ell=0}^L,
	\qquad
\bs T_\kappa (w_\ell)_{\ell=0}^L =\big(\kappa_\ell w_\ell\big)_{\ell=0}^L.
\]
\item A second layer of product spaces given by
$\hH:=\LL \times \bsLL$, $\mathbb B:=\HH^{1/2}_\Gamma\times \HH^{-1/2}_\Gamma$, and  $\vV:=\HH\times \HHdiv$, where $\vV$ and $\mathbb B$ are endowed with the product norm, while in $\hH$ we consider the weighted norm
\[
\| (\bs u,\bs w)\|_{\hH}^2
=\|\left((u_\ell)_{\ell=0}^L, (w_\ell)_{\ell=0}^L\right)\|_{\hH}^2:=
\sum_{\ell=0}^L c_\ell^{-2} \| u_\ell\|_{L^2(\R^d)}^2
+\sum_{\ell=0}^L \kappa_\ell^{-1} \| w_\ell\|_{L^2(\R^d)}^2,
\]
  with the associated inner product given by
  \begin{align*}
    \big\langle (\bs u,\bs w),(\bs v,\bs p)\big\rangle_{\hH}
    &=\big\langle\left((u_\ell)_{\ell=0}^L, (w_\ell)_{\ell=0}^L\right),
    \left((v_\ell)_{\ell=0}^L, (p_\ell)_{\ell=0}^L\right)
      \big\rangle_{\hH}\\
    &:=
    \sum_{\ell=0}^L c_\ell^{-2} ( u_\ell,v_\ell)_{L^2(\R^d)}
    +\sum_{\ell=0}^L \kappa_\ell^{-1} ( w_\ell,p_\ell)_{L^2(\R^d)}. 
  \end{align*}

\item The operator $\bs A_\star:\vV\to \hH$ (see the right-hand side of \eqref{eq:probMain:a}) given by
\[
\bs A_\star(\bs u,\bs w):=(\bs T_{c^2} \nabla\cdot \bs w,\, \bs T_\kappa \nabla \bs u).
\]
\item The space $\mM:= (\YY_h^\circ)'\times (\XX_h^\circ)'\times \XX_h'\times \YY_h'$, endowed with the product dual norm, where in $\XX_h$ and $\YY_h^\circ$ we use the $\HH^{-1/2}_\Gamma$ norm and in $\YY_h$ and $\XX_h^\circ$ we use the $\HH^{1/2}_\Gamma$ norm.

    Note that while the spaces $(\YY_h^\circ)'$  and $(\XX_h^\circ)'$ have quite complicated structure, we will only
    directly use the subset of functionals of the following form: given $\bs \psi \in \HH^{1/2}_{\Gamma}
    $ and $\bs \phi \in \HH^{-1/2}_{\Gamma}$:
    $$
    \YY_h^\circ\ni\bs\xi\mapsto \langle\bs\xi,\bs \psi\rangle_\Gamma \qquad 
  \text{ or } \qquad 
  \XX_h^\circ\ni\bs\eta\mapsto \langle\bs \phi,\bs \eta\rangle_\Gamma,
$$
   i.e., restrictions of standard $\HH_\Gamma^{-1/2}$ or $\HH_\Gamma^{1/2}$ functionals to
   the smaller subspace of functions satisfying an additional orthogonality condition.
  
\item The boundary operator $\bs B_h:\vV\to \mM$ given by
\[
\bs B_h(\bs u,\bs w):=
(\prodtracejump{\bs u}|_{\YY_h^\circ},
	\prodnormaltracejump{\bs w}|_{\XX_h^\circ},
	\prodtraceext{u}|_{\XX_h},
	\prodnormaltraceext{\bs w}|_{\YY_h}
).
\]
The operator $\bs N_h: \mathbb B \to \mM$ given by $\bs N_h\bs \xi=\bs N_h(\bs\xi^0,\bs\xi^1):=-(\bs\xi^0|_{\mathcal Y_h^\circ},\bs\xi^1|_{\mathcal X_h^\circ},0,0)$.
\end{enumerate}
In the definitions of $\bs B_h$ and $\bs N_h$ we use the same notational convention as in~\cite{sayas_new_analysis} that we explain with the first component of $\bs B_h$: since $\prodtracejump{\bs u}\in \HH^{1/2}_\Gamma=(\HH^{-1/2}_\Gamma)'$ and $\YY_h^\circ\subseteq \HH^{-1/2}_\Gamma$, we can consider $\prodtracejump{\bs u}|_{\YY_h^\circ}\in (\YY_h^\circ)'$ as the functional 
$\YY_h^\circ\ni\bs\xi\mapsto \langle\bs\xi,\prodtracejump{\bs u}\rangle_\Gamma$.
Note that since
  identifying functionals with their Riesz-representant,
  restricting functionals, and padding with zeros  all have operator norm $1$ it holds that
\begin{equation}\label{eq:26}
\|\bs N_h\bs\xi(t)\|_{\mM}\le \| \bs\xi(t)\|_{\mathbb B}
\end{equation}
and that the operator norm of $\bs B_h$ can be bounded independently of the choice of $\XX_h$ and $\YY_h$
as it can be written as a combination of trace operators, restriction maps and the Riesz-isomorphism. 
We can then write problem \eqref{eq:probMain} in the following condensed form. We look for $(\bs u^h,\bs w^h):[0,\infty)\to \vV$ satisfying
\begin{subequations}\label{eq:25}
\begin{alignat}{6}
(\dot{\bs u}^h(t),\dot{\bs w}^h(t))&=\bs A_\star (\bs u^h(t),\bs w^h(t)) &\qquad & \forall t\ge 0, \label{eq:25a}\\
\bs B_h (\bs u^h(t),\bs w^h(t))&=\bs N_h\bs\xi(t) & & \forall t > 0,\\
(\bs u^h(0),\bs w^h(0))&=0.
\end{alignat}
\end{subequations}
Occasionally, we will write $\AA:=\AAstar|_{\ker(\BB_h)}$ for the operator endowed with homogeneous
boundary conditions, i.e., with $\dom(\AA)=\ker(\BB_h) \subseteq \vV$.

\subsection{Analysis}

The next three lemmas will verify the hypotheses of the general framework of \cite[Appendix~A]{brown2018}, which had streamlined the hypotheses of \cite[Sect.~{3}]{sayas_new_analysis}.

\begin{lemma}\label{lemma:1}
The following equality holds:
\[
(\bs A_\star(\bs u,\bs w),(\bs u,\bs w))_\hH=0 \qquad \forall (\bs u,\bs w)\in \ker \bs  B_h.
\]
\end{lemma}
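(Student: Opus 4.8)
The plan is to rewrite the left-hand side as a sum of boundary pairings via componentwise integration by parts, and then to read off from the definition of $\bs B_h$ that every such pairing involves two factors that annihilate each other in the $\Gamma$-duality.

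First I would unfold the definitions. Since $\bs A_\star(\bs u,\bs w)=(\bs T_{c^2}\nabla\cdot\bs w,\bs T_\kappa\nabla\bs u)$ and the weights in $\langle\cdot,\cdot\rangle_{\hH}$ are precisely $c_\ell^{-2}$ and $\kappa_\ell^{-1}$, the diagonal scalings cancel, so
\[
(\bs A_\star(\bs u,\bs w),(\bs u,\bs w))_{\hH}=\sum_{\ell=0}^L\Big[(\nabla\cdot w_\ell,u_\ell)_{L^2(\R^d)}+(w_\ell,\nabla u_\ell)_{L^2(\R^d)}\Big].
\]
Because $(\bs u,\bs w)\in\vV=\HH\times\HHdiv$, each $u_\ell\in\hone[\R^d\setminus\partial\Omega_\ell]$ and $w_\ell\in\hdiv[\R^d\setminus\partial\Omega_\ell]$, so I can apply Green's formula separately on $\Omega_\ell$ and on the complementary (possibly unbounded, but Lipschitz) domain $\R^d\setminus\overline{\Omega_\ell}$; the formula remains valid on the unbounded pieces since the fields are globally square-integrable. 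Bearing in mind that $\nu_\ell$ always points out of $\Omega_\ell$ — hence it is the inward normal of the complementary region, which produces the sign change in the exterior term — each summand becomes $\langle\normaltraceint{\ell} w_\ell,\traceint{\ell} u_\ell\rangle_{\partial\Omega_\ell}-\langle\normaltraceext{\ell} w_\ell,\traceext{\ell} u_\ell\rangle_{\partial\Omega_\ell}$.

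Next I would use the elementary identity $a_{\mathrm i}b_{\mathrm i}-a_{\mathrm e}b_{\mathrm e}=\tfrac12(a_{\mathrm i}+a_{\mathrm e})(b_{\mathrm i}-b_{\mathrm e})+(a_{\mathrm i}-a_{\mathrm e})\tfrac12(b_{\mathrm i}+b_{\mathrm e})$, which holds verbatim with the $H^{-1/2}$–$H^{1/2}$ duality in place of the scalar product, together with the diagonal operators $\prodtrace$ and $\prodnormaltrace$, to recast the sum as
\[
(\bs A_\star(\bs u,\bs w),(\bs u,\bs w))_{\hH}=\langle\prodnormaltracemean{\bs w},\prodtracejump{\bs u}\rangle_\Gamma+\langle\prodnormaltracejump{\bs w},\prodtracemean{\bs u}\rangle_\Gamma,
\]
which in fact holds for every $(\bs u,\bs w)\in\vV$. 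Substituting $\prodtracemean{\bs u}=\prodtraceext{\bs u}+\tfrac12\prodtracejump{\bs u}$ and $\prodnormaltracemean{\bs w}=\prodnormaltraceext{\bs w}+\tfrac12\prodnormaltracejump{\bs w}$ and expanding by bilinearity (the two copies of $\tfrac12\langle\prodnormaltracejump{\bs w},\prodtracejump{\bs u}\rangle_\Gamma$ adding up), the right-hand side reduces to
\[
\langle\prodnormaltraceext{\bs w},\prodtracejump{\bs u}\rangle_\Gamma+\langle\prodnormaltracejump{\bs w},\prodtracejump{\bs u}\rangle_\Gamma+\langle\prodnormaltracejump{\bs w},\prodtraceext{\bs u}\rangle_\Gamma.
\]

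Finally I would use the description of $\ker\bs B_h$. By the definition of $\bs B_h$ and the bipolar theorem (cf.\ the discussion around \eqref{XYpolar}), membership $(\bs u,\bs w)\in\ker\bs B_h$ is equivalent to the homogeneous transmission and Galerkin conditions $\prodtracejump{\bs u}\in\YY_h$, $\prodnormaltracejump{\bs w}\in\XX_h$, $\prodtraceext{\bs u}\in\XX_h^\circ$, $\prodnormaltraceext{\bs w}\in\YY_h^\circ$. Then the first term above vanishes because $\prodnormaltraceext{\bs w}\in\YY_h^\circ$ annihilates $\prodtracejump{\bs u}\in\YY_h$; the second vanishes because, using the polar inclusion $\YY_h\subseteq\XX_h^\circ$ recorded after \eqref{XYpolar}, $\prodtracejump{\bs u}\in\XX_h^\circ$ is annihilated by $\prodnormaltracejump{\bs w}\in\XX_h$; and the third vanishes because $\prodtraceext{\bs u}\in\XX_h^\circ$ is annihilated by $\prodnormaltracejump{\bs w}\in\XX_h$. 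Hence the left-hand side is zero. The only delicate point I anticipate is the orientation bookkeeping in the integration-by-parts step (the inward/outward normal swap the authors explicitly warn about for $\Omega_0$) and matching each of the three residual $\Gamma$-pairings with the correct polar condition from $\ker\bs B_h$; there is no genuine analytic obstacle, since the Green's formulas on these Lipschitz domains and the bipolar identification are standard.
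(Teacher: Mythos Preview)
Your proof is correct and follows essentially the same approach as the paper: integration by parts followed by the four polar conditions characterizing $\ker\bs B_h$ together with the inclusion $\YY_h\subseteq\XX_h^\circ$. The only cosmetic difference is that the paper uses the more direct identity $a_{\mathrm i}b_{\mathrm i}-a_{\mathrm e}b_{\mathrm e}=(a_{\mathrm i}-a_{\mathrm e})b_{\mathrm i}+a_{\mathrm e}(b_{\mathrm i}-b_{\mathrm e})$ to arrive at the two terms $\langle\prodnormaltracejump{\bs w},\prodtraceint{\bs u}\rangle_\Gamma+\langle\prodnormaltraceext{\bs w},\prodtracejump{\bs u}\rangle_\Gamma$ (cf.~\eqref{eq:22.8}), whereas you pass through the mean--jump decomposition and then collapse back to three terms; the resulting three pairings are exactly the expansion of the paper's two.
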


\begin{proof}
A simple computation, using integration by parts on each subdomain, shows that
\begin{alignat}{6}
\nonumber
 (\bs A_\star(\bs u,\bs w),(\bs u,\bs w))_\hH
& = \sum_{\ell=0}^L 
\langle \normaltraceint\ell  w_\ell, \traceint\ell u_\ell \rangle_{\partial\Omega_\ell}-
\langle \normaltraceext\ell  w_\ell, \traceext\ell u_\ell \rangle_{\partial\Omega_\ell}\\
& = \langle \prodnormaltracejump{\bs w}, \prodtraceint{\bs u}\rangle_\Gamma
+\langle \prodnormaltraceext{\bs w},\prodtracejump{\bs u}\rangle_\Gamma
\qquad\forall (\bs u,\bs w)\in \hH.
\label{eq:22.8}
\end{alignat}
Note that $(\bs u,\bs w)\in \ker \bs B_h$ if and only if
\begin{equation}\label{eq:22.10}
\prodtracejump{\bs u}\in \YY_h,
	\qquad
\prodnormaltracejump{\bs w}\in \XX_h,
	\qquad
\prodtraceext{\bs u}\in \XX_h^\circ,
	\qquad
\prodnormaltraceext{\bs w}\in \YY_h^\circ,
\end{equation}
given that
$\dualproduct{\prodnormaltracejump{\bs w}}{\bs \eta}=0 \; \forall \bs \eta \in \XX_h^{\circ}$ implies
$\prodnormaltracejump{\bs w} \in \XX_h$ (in short $(\XX_h^\circ)^\circ=\XX_h$;
this is a simple consequence of the closedness of $\XX_h$ and the Hahn-Banach theorem).
Similarly, $(\YY_h^\circ)^\circ=\YY_h$. 
Since  $\YY_h\subseteq \XX_h^\circ$, the first and third conditions 
in \eqref{eq:22.10} imply $\prodtraceint{\bs u}\in \XX_h^\circ$. 
The result is then a consequence of \eqref{eq:22.8}.
\end{proof}

\begin{lemma}\label{lemma:2}
For all $(\bs f,\bs g)\in \hH$ and $\bs \zeta\in \mM$, there exists a unique $(\bs u,\bs w)\in \vV$ such that
\[
(\bs u,\bs w)=\bs A_\star(\bs u,\bs w)+(\bs f,\bs g), \qquad \bs B_h(\bs u,\bs w)=\bs \zeta,
\]
and there exists a constant $C>0$, depending only on the geometry and the physical parameters (and thus independent of the choice of $\XX_h$ and $\YY_h$) such that
\[
\| (\bs u,\bs w)\|_\hH\le C \big(\|(\bs f,\bs g)\|_\hH +\|\bs \zeta\|_\mM\big).
\]
\end{lemma}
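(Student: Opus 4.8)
The plan is to solve the resolvent-type problem in \eqref{eq:probMain} directly, reducing it to a coercive variational problem on a suitable single-trace subspace of $\vV$. First I would rewrite the system $(\bs u,\bs w)=\bs A_\star(\bs u,\bs w)+(\bs f,\bs g)$ componentwise: it reads $u_\ell - c_\ell^2\nabla\cdot w_\ell = f_\ell$ and $w_\ell - \kappa_\ell\nabla u_\ell = g_\ell$ on each $\R^d\setminus\partial\Omega_\ell$. Using the second equation to eliminate $w_\ell = \kappa_\ell\nabla u_\ell + g_\ell$ and substituting into the first yields a second-order equation $u_\ell - c_\ell^2\kappa_\ell\Delta u_\ell = f_\ell + c_\ell^2\nabla\cdot g_\ell$ on each subdomain, coupled only through the boundary conditions encoded in $\bs B_h(\bs u,\bs w)=\bs\zeta$.

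The key step is to recast the boundary conditions variationally. Writing $\bs\zeta=(\zeta_1,\zeta_2,\zeta_3,\zeta_4)\in (\YY_h^\circ)'\times(\XX_h^\circ)'\times\XX_h'\times\YY_h'$, I would first lift the inhomogeneous data: choose $(\bs u_0,\bs w_0)\in\vV$ with $\prodtracejump{\bs u_0}$ matching $\zeta_1$ on $\YY_h^\circ$, $\prodtraceext{\bs u_0}$ matching $\zeta_3$ on $\XX_h$, and analogously for $\bs w_0$ and $\zeta_2,\zeta_4$; such a lifting exists with norm controlled by $\|\bs\zeta\|_\mM$ because the trace operators are surjective onto the relevant (sub)spaces and the data functionals are of the explicit form described after the definition of $\bs N_h$. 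After subtracting the lifting we are reduced to homogeneous boundary data, i.e. to finding $(\bs u,\bs w)\in\ker\bs B_h=\dom(\AA)$ with $(\id-\AA)(\bs u,\bs w)=(\tilde{\bs f},\tilde{\bs g})$. Testing with $(\bs v,\bs p)\in\ker\bs B_h$, integrating by parts on each $\Omega_\ell$, and using that all the trace/normal-trace jumps and exterior traces of test functions satisfy the orthogonality relations in \eqref{eq:22.10} (exactly as in the proof of Lemma~\ref{lemma:1}), the boundary terms cancel; this gives a bilinear form $a\big((\bs u,\bs w),(\bs v,\bs p)\big) := \langle(\bs u,\bs w),(\bs v,\bs p)\rangle_\hH - \langle\bs A_\star(\bs u,\bs w),(\bs v,\bs p)\rangle_\hH$ that, after symmetrization (replace $w$ by $\partial_t^{-1}$-type rescaling, or work with the symmetric part), is coercive on $\ker\bs B_h$ with constant depending only on $\min_\ell c_\ell^{-2},\kappa_\ell^{-1}$. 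Lax--Milgram then yields existence, uniqueness, and the bound $\|(\bs u,\bs w)\|_\hH\lesssim\|(\tilde{\bs f},\tilde{\bs g})\|_\hH$; combining with the lifting estimate gives the claimed bound with a constant independent of $\XX_h,\YY_h$.

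The main obstacle is establishing that $\ker\bs B_h$ is a closed subspace on which Lax--Milgram genuinely applies, and—more delicately—that the lifting of the boundary data $\bs\zeta$ has norm controlled by $\|\bs\zeta\|_\mM$ \emph{uniformly in $\XX_h,\YY_h$}. The uniformity is the crux: one must build the lifting from the \emph{continuous} trace operators (whose surjectivity constants depend only on the geometry) and then simply restrict the resulting functionals, rather than inverting any discrete trace map; the observation after the definition of $\bs N_h$—that restriction and zero-padding have operator norm $1$—is precisely what makes this work, and one must be careful that the components $\zeta_1,\zeta_2$ living in the awkward spaces $(\YY_h^\circ)',(\XX_h^\circ)'$ are only ever used through their action as restricted $\HH^{\mp1/2}_\Gamma$ functionals. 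A secondary technical point is that $\bs A_\star$ is only skew-adjoint on $\ker\bs B_h$ rather than on all of $\vV$, so the natural bilinear form $a(\cdot,\cdot)$ is coercive only after restricting to $\ker\bs B_h$; one should verify that the homogeneous-data reduced problem indeed keeps the solution in $\ker\bs B_h$, which follows because the lifting was chosen to absorb exactly the inhomogeneity in each of the four boundary conditions.
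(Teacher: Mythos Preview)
Your first paragraph has the right idea---eliminate $\bs w$ via $w_\ell=\kappa_\ell\nabla u_\ell+g_\ell$ and reduce to a second-order problem for $\bs u$---and this is exactly what the paper does. But your second paragraph abandons it and instead tries Lax--Milgram directly on $\ker\bs B_h\subset\vV$ with the first-order form $a(x,y)=\langle x,y\rangle_\hH-\langle\bs A_\star x,y\rangle_\hH$. That is where the gap lies: by Lemma~\ref{lemma:1} this form satisfies $a(x,x)=\|x\|_\hH^2$, which is coercivity only in the $\hH$-norm, not the $\vV$-norm; yet $a$ is unbounded in the $\hH$-norm since it contains $\bs A_\star$. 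Your proposed fixes do not help: the ``symmetric part'' is precisely $\langle\cdot,\cdot\rangle_\hH$, again only $\hH$-coercive, and there is no time variable here to ``$\partial_t^{-1}$-rescale''. So Lax--Milgram applies on $\ker\bs B_h$ in neither norm, and invoking an abstract resolvent estimate for $\AA$ would be circular---this lemma is one of the ingredients used to prove that $\AA$ generates a semigroup.

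The paper carries your first paragraph through: it works on $\mathcal W=\{\bs v\in\HH:\prodtracejump{\bs v}\in\YY_h,\ \prodtraceext{\bs v}\in\XX_h^\circ\}$ (the $\bs u$-slice of $\ker\bs B_h$) with the genuinely $H^1$-bounded and $H^1$-coercive form $a(\bs u,\bs v)=\sum_\ell c_\ell^{-2}(u_\ell,v_\ell)+\kappa_\ell(\nabla u_\ell,\nabla v_\ell)$. Only the Dirichlet-type data $\zeta_1,\zeta_3$ are lifted as essential constraints (surjectivity of the trace map plus a Hahn--Banach norm-preserving right inverse gives the $h$-uniform bound you were worried about); the Neumann-type data $\zeta_2,\zeta_4$ enter naturally in the right-hand side functional and are recovered a posteriori once one sets $\bs w:=\bs T_\kappa\nabla\bs u+\bs g$. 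Your instinct about uniform lifting via continuous trace operators is correct, but lifting all four components at once and then retreating to the first-order problem on $\ker\bs B_h$ forfeits the coercivity you need.
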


\begin{proof}
We write $\bs f=(f_\ell)_{\ell=0}^L$, $\bs g=(\bs g_\ell)_{\ell=0}^L$, and
$
\bs \zeta=(\zeta_1,\zeta_2,\zeta_3,\zeta_4).%\in (\YY_h^\circ)'\times (\XX_h^\circ)'\times \XX_h'\times \YY_h'.
$
Consider the space
\begin{alignat*}{6}
\mathcal W:= & \{ \bs v \in \HH\,:\, \prodtracejump{\bs v}\in \YY_h, \prodtraceext{\bs v}\in \XX_h^\circ\}\\
	\stackrel{\YY_h \subset \XX_h^\circ}{=}& \{ \bs v\in \HH\,:\, \prodtracejump{\bs v}\in \YY_h, \prodtraceint{\bs v}\in \XX_h^\circ\}
	\stackrel{(\ref{eq:22.10})}{=} \{ \bs v\in \HH\,:\, (\bs v,0)\in \ker \bs B_h\},
\end{alignat*}
and  
\begin{alignat*}{6}
a(\bs u,\bs v) :=& \sum_{\ell=0}^L c_\ell^{-2} (u_\ell, v_\ell)_{L^2(\R^d)}
		+\sum_{\ell=0}^L 
		\kappa_\ell (\nabla u_\ell, \nabla v_\ell)_{L^2(\R^d\setminus\partial\Omega_\ell)},\\
b(\bs v) :=& \sum_{\ell=0}^L c_\ell^{-2} (f_\ell,v_\ell)_{L^2(\R^d)}
		- \sum_{\ell=0}^L (g_\ell,\nabla v_\ell)_{L^2(\R^d\setminus\partial\Omega_\ell)}
		 + \langle \zeta_2,\prodtraceint \bs v\rangle_{(\XX_h^\circ)'\times \XX_h^\circ}
			+\langle \zeta_4,\prodtracejump{\bs v} \rangle_{\YY_h'\times \YY_h}.
\end{alignat*}
On the closed subspace ${\mathcal W} \subset \HH$ the bilinear form $a$ is bounded and coercive with constants depending only on the physical coefficients. The linear functional $b$ is bounded and
\[
\| b\|_{\mathcal W'}\le C (\| (\bs f,\bs g)\|_{\hH}+\|\bs \zeta\|_{\mM}).
\]
We now look for $\bs u\in \HH$ satisfying
\begin{subequations}\label{eq:VP}
\begin{alignat}{6}
\label{eq:VP1}
& \prodtracejump{\bs u}|_{\YY_h^\circ}=\zeta_1, \quad \prodtraceext{\bs u}|_{\XX_h}=\zeta_3,\\
& a(\bs u ,\bs v)=b(\bs v) \quad\forall \bs v\in \mathcal W. \label{eq:VP2}
\end{alignat}
\end{subequations}
To that end, we observe that the map
\[
\HH \ni \bs u \longmapsto (\prodtracejump{\bs u},\prodtraceext{\bs u})\in 
\HH^{1/2}_\Gamma\times \HH^{1/2}_\Gamma
\]
admits a bounded right-inverse and that the restriction map
\[
\HH^{1/2}_\Gamma\times \HH^{1/2}_\Gamma
=(\HH^{-1/2}_\Gamma)'\times(\HH^{-1/2}_\Gamma)'
\to (\YY_h^\circ)'\times \XX_h'
\]
admits a norm preserving right-inverse by the Hahn-Banach theorem.
Hence, the linear map that imposes the essential transmission conditions in \eqref{eq:VP1} admits a bounded right-inverse with bound independent of the choice of $\XX_h$ and $\YY_h$. 
  Existence of the solution $\bs u$ of \eqref{eq:VP} is therefore ensured by first lifting the essential transmission conditions
  to get a function $\widetilde{\bs u}$ and then  solving~\eqref{eq:VP2} 
  with homogeneous transmission conditions and a modified right-hand side to obtain a function $\bs u_0 \in \mathcal{W}$.
  (Recall that on this space, the bilinear form $a$ is coercive.)
  Setting $\bs u=\bs u_0+\widetilde{\bs u}$, we note that the transmission conditions~\eqref{eq:VP1}
  still hold, since
  by the definition of polar sets
  $\prodtracejump{\bs u_0} \in \YY_h$ implies $\prodtracejump{\bs u_0}|_{\YY_h^\circ}=0$
  and $\prodtraceext{\bs u_0} \in \XX_{h}^{\circ}$
  gives $\prodtraceext{\bs u_0}|_{\XX_h}=0$.  

  With a solution $\bs u$ to~\eqref{eq:VP} in hand, we
define $\bs w:=\bs T_\kappa \nabla \bs{u}+\bs g$. It is simple to prove that $\bs T_{c^2}\nabla\cdot\bs w+\bs f=\bs u$ (therefore $\bs u\in \HHdiv$), 
where all the operators are applied in a component-wise way and separately on $\Omega_{\ell}$ and
$\R^d \setminus \overline{\Omega_{\ell}}$. Hence, 
$(\bs u,\bs w)=\bs A_\star(\bs u,\bs w)+(\bs f,\bs g). $
It is also easy  to check that 
\begin{align}
  \label{eq:natural_transmission_conditions}
\langle \prodnormaltracejump{\bs w},\prodtraceint{\bs v}\rangle_\Gamma
	+\langle \prodnormaltraceext{\bs w},\prodtracejump {\bs v}\rangle_\Gamma
	=\langle \zeta_2,\prodtraceint{\bs v}\rangle_{(\XX_h^\circ)'\times \XX_h^\circ}
			+\langle\zeta_4,\prodtracejump{\bs v}\rangle_{\YY_h'\times\YY_h}\; \forall \bs v\in \mathcal W.
\end{align}
\eqref{eq:natural_transmission_conditions} implies $\prodnormaltracejump{\bs w}|_{\XX_h^\circ} = \zeta_2$
and $\prodnormaltraceext{\bs w}|_{\YY_h} = \zeta_4$ in view of the surjectivity of the map 
\[
\mathcal W \ni \bs v \longmapsto 
(\prodtraceint{\bs v},\prodtracejump{\bs v})\in{\XX_h^\circ}\times \YY_h. 
\]
%is surjective, the missing two conditions to obtain $\bs B_h(\bs u ,\bs w)=\bs \zeta$
%follow from~\eqref{eq:natural_transmission_conditions}. 
Together with \eqref{eq:VP1}, we see that $\bs B_h(\bs u,\bs w) = \bs \zeta$. 
The norm bound follows by the construction.
\end{proof}

\begin{lemma}\label{lemma:3}
The sign flipping operator $\Phi(\bs u,\bs w)=(\bs u,-\bs w)$ is an isometric involution in $\hH$ that preserves $\ker\bs B$ and satisfies $\Phi\bs A_\star=-\bs A_\star\Phi$.
\end{lemma}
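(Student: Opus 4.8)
The plan is to verify the three asserted properties of $\Phi$ in turn, all of which are essentially direct computations once one unwinds the definitions of the inner product on $\hH$, the operator $\bs A_\star$, and the boundary operator $\bs B_h$ (really $\bs B$, i.e.\ the full-space version with $\XX_h = \XX$, $\YY_h = \YY$). First I would observe that $\Phi$ flips only the second (vector-valued) component, leaving the scalar component untouched. Isometry is then immediate from the fact that the weighted $\hH$-norm is a sum of a term in $\bs u$ and a term in $\bs w$, with the $\bs w$-term depending only on $\norm{w_\ell}_{L^2}^2$, which is insensitive to the sign change; and $\Phi^2 = \mathrm{id}$ is obvious. So the isometric-involution part is a one-line observation.

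Next I would check the anticommutation relation $\Phi \bs A_\star = -\bs A_\star \Phi$. Applying $\bs A_\star$ to $\Phi(\bs u, \bs w) = (\bs u, -\bs w)$ gives $(\bs T_{c^2}\nabla\cdot(-\bs w),\, \bs T_\kappa \nabla \bs u) = (-\bs T_{c^2}\nabla\cdot\bs w,\, \bs T_\kappa \nabla\bs u)$, whereas $-\bs A_\star(\bs u,\bs w) = (-\bs T_{c^2}\nabla\cdot\bs w,\, -\bs T_\kappa\nabla\bs u)$; these agree in the first slot but \emph{not} the second, so one must then apply $\Phi$ on the outside: $\Phi\bigl(\bs A_\star(\bs u,-\bs w)\bigr) = \Phi(-\bs T_{c^2}\nabla\cdot\bs w,\ \bs T_\kappa\nabla\bs u) = (-\bs T_{c^2}\nabla\cdot\bs w,\ -\bs T_\kappa\nabla\bs u) = -\bs A_\star(\bs u,\bs w)$. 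So it is a two-line computation; the only subtlety is keeping track of which $\Phi$ is inside and which outside, i.e.\ reading the identity as $(\Phi\circ\bs A_\star)(\bs u,\bs w) = -(\bs A_\star\circ\Phi)(\bs u,\bs w)$ rather than $\Phi\bs A_\star\Phi = -\bs A_\star$ (which is the same statement since $\Phi^2=\mathrm{id}$).

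Finally, for $\Phi$ preserving $\ker\bs B$: since $\Phi$ does not change $\bs u$ at all, the two conditions $\prodtracejump{\bs u}\in\YY$ and $\prodtraceext{\bs u}\in\XX^\circ$ are untouched; and since $\Phi$ changes $\bs w$ only by an overall sign, and the trace operators $\prodnormaltracejump{\cdot}$ and $\prodnormaltraceext{\cdot}$ are linear, the conditions $\prodnormaltracejump{\bs w}\in\XX$ and $\prodnormaltraceext{\bs w}\in\YY^\circ$ are preserved because $\XX$ and $\YY^\circ$ are linear subspaces (closed under negation). By the characterization \eqref{eq:22.10} of $\ker\bs B_h$ (specialized to $\XX_h=\XX,\YY_h=\YY$), this shows $\Phi(\ker\bs B)\subseteq\ker\bs B$, and applying $\Phi$ again (it is an involution) gives equality. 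I do not anticipate any real obstacle here; the statement is purely bookkeeping, and the only thing to be careful about is that the claim is about $\bs B$ (the continuous operator) rather than the discrete $\bs B_h$, though in fact the same argument works verbatim for $\bs B_h$ since $\XX_h$, $\YY_h$, $\XX_h^\circ$, $\YY_h^\circ$ are all linear subspaces.
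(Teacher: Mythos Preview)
Your proposal is correct and is precisely the direct verification the paper has in mind; the paper's own proof consists of the single word ``Straightforward.'' Your remark that the argument works verbatim for $\bs B_h$ (not just the continuous $\bs B$) is apt, since the subsequent application to the semigroup framework uses $\ker\bs B_h$.
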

\begin{proof} Straightforward.\end{proof}

Following the arguments in \cite[Appendix A]{brown2018},  
Lemmas~\ref{lemma:1}---\ref{lemma:3} prove that the unbounded operator $\bs A_\star|_{\ker\bs B_h}$ is the infinitesimal generator of a group of isometries in $\hH$.

\begin{theorem}\label{thm:26}
If $\bs\xi \in \mathcal C^2([0,\infty);\mathbb B)$ satisfies
$\bs\xi(0)=\dot{\bs\xi}(0)=0$, then the unique solution of \eqref{eq:25} satisfies
\begin{equation}\label{eq:22.12}
\| \big(\bs u^h(t),\bs w^h(t)\big)\|_\vV \le C t \big( 
	\max_{0\le \tau\le t}\|\bs\xi(\tau)\|_{\mathbb B}
	+\max_{0\le \tau\le t}\|\dot{\bs\xi}(\tau)\|_{\mathbb B}
	+\max_{0\le \tau\le t}\|\ddot{\bs\xi}(\tau)\|_{\mathbb B}\big).
  \end{equation}
  Moreover, for $\ell \in \N$,
  if in addition $\bs\xi \in \mathcal C^{\ell+2}([0,\infty);\mathbb B)$ and ${\bs\xi}^{(j)}(0)=0$ for $j\leq\ell+1$, we can also estimate
\begin{equation}\label{eq:28}
\big\|\frac{d^\ell}{dt^\ell}\big(\bs u^h(t), \bs w^h(t)\big)\big\|_{\vV} \le 
C t \sum_{j=\ell}^{\ell+2}\big( 
	\max_{0\le \tau\le t}\|{\bs\xi}^{(j)}(\tau)\|_{\mathbb B}
	%+\max_{0\le \tau\le t}\|\ddot{\bs\xi}(\tau)\|_{\mathbb B}
	%+\max_{0\le \tau\le t}\|\dddot{\bs\xi}(\tau)\|_{\mathbb B}
	\big).
  \end{equation}
\end{theorem}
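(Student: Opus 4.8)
I would prove Theorem~\ref{thm:26} by the by-now standard "lifting plus isometry semigroup" reduction, exploiting that Lemmas~\ref{lemma:1}--\ref{lemma:3} make $\bs A:=\bs A_\star|_{\ker\bs B_h}$ the generator of a $C_0$-group of isometries $e^{t\bs A}$ on $\hH$. \emph{Step 1 (elliptic lifting of the boundary data).} For each fixed $t$, apply Lemma~\ref{lemma:2} with $(\bs f,\bs g)=0$ and $\bs\zeta=\bs N_h\bs\xi(t)$ to obtain a unique $\bs U_\star(t)\in\vV$ with $\bs U_\star(t)=\bs A_\star\bs U_\star(t)$, $\bs B_h\bs U_\star(t)=\bs N_h\bs\xi(t)$, and $\|\bs U_\star(t)\|_\vV\le C\|\bs N_h\bs\xi(t)\|_\mM\le C\|\bs\xi(t)\|_{\mathbb B}$, where the last step is \eqref{eq:26} and $C$ depends only on the geometry and physical parameters (hence not on $\XX_h,\YY_h$). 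Since the lifting map furnished by Lemma~\ref{lemma:2} is bounded, linear, and $t$-independent, $\bs U_\star$ inherits the $\mathcal C^2$-regularity of $\bs\xi$, and $\dot{\bs U}_\star(t)$, $\ddot{\bs U}_\star(t)$ solve the same elliptic problem with data $\dot{\bs\xi}(t),\ddot{\bs\xi}(t)$; in particular $\|\dot{\bs U}_\star(t)\|_\vV\le C\|\dot{\bs\xi}(t)\|_{\mathbb B}$, $\|\ddot{\bs U}_\star(t)\|_\vV\le C\|\ddot{\bs\xi}(t)\|_{\mathbb B}$, and $\bs\xi(0)=\dot{\bs\xi}(0)=0$ forces $\bs U_\star(0)=\dot{\bs U}_\star(0)=0$.

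\emph{Step 2 (reduction to a homogeneous problem with a source).} Set $\bs U_0:=(\bs u^h,\bs w^h)-\bs U_\star$. Then $\bs B_h\bs U_0(t)=\bs N_h\bs\xi(t)-\bs N_h\bs\xi(t)=0$, i.e. $\bs U_0(t)\in\dom(\bs A)$; subtracting the two evolution equations and using $\bs A_\star\bs U_\star=\bs U_\star$ gives $\dot{\bs U}_0=\bs A\bs U_0+\bs F$ with $\bs F:=\bs U_\star-\dot{\bs U}_\star$ and $\bs U_0(0)=0$. Since $\bs F\in\mathcal C^1([0,\infty);\hH)$ (Step 1) and $\bs F(0)=0$, the variation-of-constants formula yields, as classical identities, $\bs U_0(t)=\int_0^t e^{(t-s)\bs A}\bs F(s)\,\mathrm ds$ and $\dot{\bs U}_0(t)=\int_0^t e^{(t-s)\bs A}\dot{\bs F}(s)\,\mathrm ds$, and isometry of $e^{t\bs A}$ gives $\|\bs U_0(t)\|_\hH\le t\max_{0\le s\le t}\|\bs F(s)\|_\hH$ and $\|\dot{\bs U}_0(t)\|_\hH\le t\max_{0\le s\le t}\|\dot{\bs F}(s)\|_\hH$. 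This also settles well-posedness: $\bs U_\star+\bs U_0$ is a solution, and a difference of two solutions solves $\dot{\bs D}=\bs A\bs D$, $\bs D(0)=0$, hence vanishes.

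\emph{Step 3 (from $\hH$ to $\vV$, and proof of \eqref{eq:22.12}).} On all of $\vV$ the graph norm of $\bs A_\star$ is equivalent to $\|\cdot\|_\vV$ with constants depending only on $\min_\ell c_\ell,\kappa_\ell$ and $\max_\ell c_\ell,\kappa_\ell$; this is immediate from the definitions of $\bs A_\star$, $\hH$ and $\vV$. Hence $\|\bs U_0(t)\|_\vV\le C(\|\bs U_0(t)\|_\hH+\|\bs A_\star\bs U_0(t)\|_\hH)$, and on $\dom(\bs A)$ one has $\bs A_\star\bs U_0=\bs A\bs U_0=\dot{\bs U}_0-\bs F$. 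Using $\bs\xi(0)=\dot{\bs\xi}(0)=0$ to trade a power of $t$ for a derivative — $\|\bs\xi(t)\|_{\mathbb B}\le t\max\|\dot{\bs\xi}\|$, $\|\dot{\bs\xi}(t)\|_{\mathbb B}\le t\max\|\ddot{\bs\xi}\|$, whence $\|\bs F(t)\|_\hH\le C(\|\bs\xi(t)\|_{\mathbb B}+\|\dot{\bs\xi}(t)\|_{\mathbb B})$ and $\|\dot{\bs F}(t)\|_\hH\le C(\|\dot{\bs\xi}(t)\|_{\mathbb B}+\|\ddot{\bs\xi}(t)\|_{\mathbb B})$, and $\|\bs U_\star(t)\|_\vV\le Ct\max\|\dot{\bs\xi}\|$ — and combining with the bounds of Step 2 gives $\|\bs U_0(t)\|_\vV\le Ct(\max\|\bs\xi\|+\max\|\dot{\bs\xi}\|+\max\|\ddot{\bs\xi}\|)$ over $[0,t]$; adding $\|\bs U_\star(t)\|_\vV$ yields \eqref{eq:22.12}.

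\emph{Step 4 (higher derivatives).} Differentiating \eqref{eq:25} $\ell$ times and using that $\bs A_\star,\bs B_h,\bs N_h$ are $t$-independent bounded operators shows that $\tfrac{d^\ell}{dt^\ell}(\bs u^h,\bs w^h)$ is the (unique) solution of \eqref{eq:25} with $\bs\xi$ replaced by $\bs\xi^{(\ell)}$; vanishing of all $t$-derivatives of $(\bs u^h,\bs w^h)$ at $t=0$ follows inductively from $\tfrac{d^{j+1}}{dt^{j+1}}(\bs u^h,\bs w^h)(0)=\bs A_\star\tfrac{d^{j}}{dt^{j}}(\bs u^h,\bs w^h)(0)$ and $(\bs u^h,\bs w^h)(0)=0$. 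The hypotheses on $\bs\xi$ give $\bs\xi^{(\ell)}\in\mathcal C^2([0,\infty);\mathbb B)$ with $\bs\xi^{(\ell)}(0)=\dot{(\bs\xi^{(\ell)})}(0)=0$, so \eqref{eq:28} follows by applying \eqref{eq:22.12} to this derived problem. The only genuinely non-routine point is Step~3, the passage from the $\hH$-estimate produced by the isometry semigroup to the $\vV$-estimate in the statement: this forces one extra differentiation of the reduced equation (hence the appearance of $\ddot{\bs\xi}$), and keeping the factor $t$ rather than $1$ in front requires systematically using the vanishing initial data of $\bs\xi$ and $\dot{\bs\xi}$; the attendant regularity bookkeeping needed to make the variation-of-constants identities classical rather than merely mild is the other point demanding care.
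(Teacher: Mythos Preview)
Your proof is correct and follows essentially the same approach as the paper's. The paper black-boxes Steps~1--3 by citing the abstract framework of \cite{brown2018} (streamlining \cite{sayas_new_analysis}), which itself carries out precisely the lifting-plus-isometry-semigroup argument you wrote out; the paper then does the same passage from $\hH$ to $\vV$ via the graph-norm equivalence and \eqref{eq:25a}, and the same shifting argument for \eqref{eq:28}. One cosmetic point: Lemma~\ref{lemma:2} is stated only with an $\hH$-bound, but your claimed $\vV$-bound on $\bs U_\star$ follows immediately since $\bs U_\star=\bs A_\star\bs U_\star$ forces $\|\bs A_\star\bs U_\star\|_\hH=\|\bs U_\star\|_\hH$.
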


\begin{proof}
  Let $\bs \xi\in \mathcal C^2([0,\infty);\mM)$ satisfy $\bs \xi(0)=\dot{\bs \xi}(0)=0$. Using~\cite{brown2018} (a slight simplification of \cite{sayas_new_analysis}),
  we can prove that equation \eqref{eq:25} with boundary condition $\bs B_h(\bs u^h,\bs w^h)=\bs \zeta:=\bs N_h \bs \xi$ has a unique classical solution satisfying
\begin{subequations}
\label{eq:thm26-10}
\begin{alignat}{3}
\label{eq:thm26-10-a}
\| (\bs u^h(t),\bs w^h(t)\|_{\hH} \le & C t \max_{0\le \tau\le t} (\| \bs \zeta(\tau)\|_\mM+\|\dot {\bs \zeta}(\tau)\|_\mM), \\
\label{eq:thm26-10-b}
\| (\dot{\bs u}(t),\dot{\bs w}(t)\|_{\hH} \le & 
C t \max_{0\le \tau\le t} (\| \bs{\zeta}(\tau)\|_\mM+\| \dot{\bs \zeta}(\tau)\|_\mM+\|\ddot{\bs \zeta}(\tau)\|_\mM).
\end{alignat}
\end{subequations}
To obtain~\eqref{eq:22.12} from (\ref{eq:thm26-10}), we use~\eqref{eq:26} and~\eqref{eq:25a} 
and the fact that 
$\|(\bs{u},\bs{w})\|_{\vV} \sim 
\|\bs{A}_\star (\bs{u},\bs{w})\|_{\hH} + 
\|(\bs{u},\bs{w})\|_{\hH}$. The  estimate 
\eqref{eq:28}  follows from a simple shifting argument, i.e., by differentiating the equation.
\end{proof}

\section{A system of semidiscrete TDBIE}
\label{sect:integral_equations}

In this section we relate \eqref{eq:probMain} with a system of semidiscrete-in-space time-domain boundary integral equations (TDBIE). Some concepts of TDBIE are needed for the sequel. Full details, in the same language, but with a slightly different notation (we use here Lubich's operational notation) can be found in \cite{sayas_book}.

The retarded potentials for the acoustic wave equation can be introduced through their Laplace transforms and all associated boundary integral operators will be derived using the standard rules of the Calder\'on calculus \cite[Chap.~{1}]{sayas_book}. 
For $s \in \C_+:=\{z \in  \C: \;\Re(z) > 0\}$, we denote the fundamental solution for the differential operator $\laplace - s^2$ 
by
  \begin{align*}
    \Phi(z;s):=\begin{cases}
      \frac{\ii}{4} H_0^{(1)}\left(\ii s \abs{z}\right), & \text{ for $d=2$}, \\
      \frac{e^{-s\abs{z}}}{4 \pi \abs{z}}, & \text{ for $d = 3$},
    \end{cases}
  \end{align*}
where $H_0^{(1)}$ denotes the Hankel function of the first kind and order zero.
We then define the single and double layer potentials for the Laplace resolvent equation
\begin{align*}
    \left(\mathrm S_{\ell}(s) \phi \right)(\mathbf x)
    	&:=\int_{\partial \Omega_{\ell}}{\Phi(\mathbf x-\mathbf y;s) \phi(\mathbf y) 
    	\;d\sigma(\mathbf y)}, \\
    \left(\mathrm D_{\ell}(s) \psi \right)(\mathbf x)
    	& :=\int_{\partial {\Omega_{\ell}}}{\partial_{\nu(\mathbf y)}
    	\Phi(\mathbf x-\mathbf y;s) \psi(\mathbf y) \;d\sigma(\mathbf y)},
\end{align*}
where $d\sigma$ is the arc/area element on $\partial\Omega_\ell$. We will use the symbol for normal derivatives $\bs\partial_\nu:=\bs\gamma_\nu\nabla$ in expressions for interior/exterior traces, jumps, and averages.

\begin{enumerate}
\item On each boundary $\partial\Omega_\ell$, we define the single and double layer retarded potentials
\begin{alignat*}{6}
  \mathrm S_{\ell}(\partial_t)\phi &
  :=\mathscr{L}^{-1}
  \big\{ \mathrm S(\cdot/m_\ell) \mathscr{L} \{\phi\} \big\} \;\qquad\text{and}\;\qquad
  \mathrm D_{\ell}(\partial_t)\psi
  :=\mathscr{L}^{-1}\big\{ \mathrm D(\cdot/m_\ell) \mathscr{L} \{\psi\} \big\},
\end{alignat*}
where $m_\ell:=c_\ell\sqrt{\kappa_\ell}$ and $\mathscr L$ is the distributional Laplace transform.
\item The subdomain potentials are collected in diagonal operators
\begin{alignat*}{6}
{\mathrm{ \bs S}}(\partial_t)\bs\phi
	& = \mathrm{\bs S}(\partial_t)(\phi_\ell)_{\ell=0}^L
	& & := (\mathrm S_\ell(\partial_t)\phi_\ell)_{\ell=0}^L,\\
\mathrm{\bs D}(\partial_t)\bs\psi
	& = \mathrm{\bs D}(\partial_t)(\psi_\ell)_{\ell=0}^L
	& & := (\mathrm D_\ell(\partial_t)\psi_\ell)_{\ell=0}^L,
\end{alignat*}
and we also introduce 
\[
\mathrm{\bs  G}(\partial_t):=
\left[\begin{array}{cc} 
-\mathrm{\bs  D}(\partial_t) & \mathrm{\bs S}(\partial_t)
\end{array}\right],
\]
which satisfies
\begin{equation}\label{eq:jump1}
\left[\begin{array}{c} 
	\prodtracejump{\,\cdot}\\
	\prodnormalderjump{\,\cdot} 					 
\end{array}\right]
\mathrm{\bs G}(\partial_t)=\mathrm I.
\end{equation}
\item The matrix with the time domain boundary integral operators (from all $L+1$ boundaries and using different wave speeds) is defined by
\begin{equation}\label{eq:jump2}
\mathrm{\bs C}(\partial_t):=
\left[\begin{array}{c} 
	\prodtracemean{\,\cdot} \\
	\prodnormaldermean{\,\cdot}
\end{array}\right]
\mathrm{\bs G}(\partial_t)
=\left[\begin{array}{cc}
-\mathrm{\bs K}(\partial_t) & \mathrm{\bs V}(\partial_t) \\
\mathrm{\bs W}(\partial_t) & \mathrm {\bs K^t}(\partial_t)
\end{array}\right].
\end{equation}
(The latter matrix of operators is given for ease of comparison with the literature.) Note that by \eqref{eq:jump1} and \eqref{eq:jump2}, we have
\begin{equation}\label{eq:jump3}
\left[\begin{array}{c} 
	\prodtraceext \\
	\prodnormalderext{} 				
\end{array}\right]
\mathrm{\bs G}(\partial_t)
=\mathrm{\bs C}(\partial_t)-\tfrac12\mathrm I. 
\end{equation}
\item We introduce the diagonal scaling operator
$\mathrm Q_\kappa 
(\bs\psi,\bs\phi)^\top:
=(\bs\psi,(\kappa_\ell\phi_\ell)_{\ell=0}^L)^\top,$
and the partial anti-differentiation operator $\mathrm{\bs J}(\partial_t)(\bs\xi^0,\bs\xi^1):=(\partial_t^{-1}\bs\xi^0,\bs\xi^1)$.
\end{enumerate}

Kirchhoff's formula (see \cite[Proposition~3.5.1]{sayas_book})
shows that if 
\[
\dot{\bs u} =\bs T_{c^2}\nabla\cdot \bs w, \qquad \dot{\bs w}= \bs T_\kappa\nabla \bs u,
\]
(with some very mild distributional regularity conditions and with time-differentiation understood in the sense of vector-valued distributions), then
\begin{equation}\label{eq:3.2}
\bs u=\mathrm{\bs S}(\partial_t)\prodnormaltracejump{\bs T_\kappa^{-1} \dot{\bs w}}
	-\mathrm{\bs D}(\partial_t)\prodtracejump{\bs u}=\mathrm{\bs G}(\partial_t) \mathrm Q_\kappa^{-1}
	(\prodtracejump{\bs u},\prodnormaltracejump{\dot{\bs w}})^\top.
\end{equation}
A precise statement of a theorem relating a system of semidiscrete TDBIE with a distributional version of \eqref{eq:probMain} would use the language of Laplace transformable causal distributions that we will avoid.

To make notation more compact and compatible with the definition of $\mathrm {\bs G}(\partial_t)$, we will collect the $\mathbb B$-valued densities $(\bs\psi,\bs\phi)$ in the column vector $\bs\lambda:=(\bs\psi,\bs\phi)^\top$. The data will appear in the somewhat peculiar form $\mathrm {\bs J}(\partial_t)\dot{\bs\xi}=(\bs\xi^0,\dot{\bs\xi}^1)$. The operator $\mathrm {\bs J}(\partial_t)$ will be part of what we will discretize in time, while we will work with $\dot{\bs\xi}$ as data, which means that we will use
$\traceint0 \dot u^{\mathrm{inc}}$ and $\normaltraceint0 \nabla u^{\mathrm{inc}}$ as data for the numerical method expressed with TDBIE (see Proposition \ref{prop:2.2}), i.e., we either differentiate the incident wave in space or in time. When the incident wave is a plane wave $u^{\mathrm{inc}}(t)(\mathbf x)=g(\mathbf x\cdot\mathbf d-t)$ (for $\mathbf d\in \R^d$ with $|\mathbf d|=1$), we will only need to evaluate $\dot g(\mathbf x\cdot\mathbf d-t)$ for points $\mathbf x\in \partial\Omega_0$.

As in the previous section, we treat the continuous-  and the semidiscrete problem
  simultaneously. The continuous solutions can always be recovered by taking
  $\XX_h=\XX$ and $\YY_h=\YY$. We denote the continuous field by removing the
  index $h$, i.e., writing $\bs u$, $\bs w$, $\bs \lambda$, etc.

\begin{theorem}\label{thm:32}
If $(\bs u^h,\bs w^h)$ is a $\vV$-valued causal distribution satisfying
\begin{equation}\label{eq:33.5}
(\dot{\bs u}^h,\dot{\bs w}^h)=\bs A_\star(\bs u^h,\bs w^h),
\qquad
\bs B_h(\bs u^h,\bs w^h)=\bs N_h\bs\xi,
\end{equation}
then $\bs \lambda^h:=(
	\prodtracejump{\bs u^h}+\bs\xi^0,
	\prodnormaltracejump{\dot{\bs w}^h}+\dot{\bs\xi}^1)^\top$
is the unique $\mathbb B$-valued causal distribution satisfying
\begin{subequations}\label{eq:36}
\begin{alignat}{6}
\label{eq:36a}
& \bs\lambda^h \in \YY_h\times \XX_h,\\
\label{eq:36b}
& \langle \mathrm Q_\kappa \mathrm{\bs C}(\partial_t)\mathrm Q_\kappa^{-1}\bs\lambda^h,
	\bs\varpi\rangle = 
	\langle \mathrm Q_\kappa (\mathrm {\bs C}(\partial_t)-\tfrac12\mathrm I)
	\mathrm Q_\kappa^{-1}\mathrm{\bs J}(\partial_t)\dot{\bs\xi},\bs\varpi\rangle 
	\qquad\forall \bs\varpi\in \XX_h\times \YY_h,
\end{alignat}
where the angled bracket is the $\mathbb B \times \mathbb B'$ duality product. 
\end{subequations}
Reciprocally, if  $\bs\lambda^h$ is the solution of \eqref{eq:36} and we let
\begin{equation}\label{eq:377}
\bs u^h:=\mathrm {\bs G}(\partial_t)\mathrm Q_\kappa^{-1}
(\bs\lambda^h-\mathrm{\bs J}(\partial_t)\dot{\bs\xi}),
\qquad
\bs w^h=\bs T_\kappa \nabla\partial_t^{-1} \bs u^h,
\end{equation}
then $(\bs u^h,\bs v^h)$ satisfies \eqref{eq:33.5}.
\end{theorem}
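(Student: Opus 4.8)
The plan is to prove the two implications of Theorem~\ref{thm:32} by translating back and forth between the first-order evolution system~\eqref{eq:33.5} and the semidiscrete boundary integral system~\eqref{eq:36}, using the representation formula~\eqref{eq:3.2} as the bridge. For the forward direction, suppose $(\bs u^h,\bs w^h)$ solves~\eqref{eq:33.5}. First I would observe that, since $\dot{\bs u}^h=\bs T_{c^2}\nabla\cdot\bs w^h$ and $\dot{\bs w}^h=\bs T_\kappa\nabla\bs u^h$ hold in the distributional sense on each $\R^d\setminus\partial\Omega_\ell$, Kirchhoff's formula~\eqref{eq:3.2} applies componentwise and yields $\bs u^h=\mathrm{\bs G}(\partial_t)\mathrm Q_\kappa^{-1}(\prodtracejump{\bs u^h},\prodnormaltracejump{\dot{\bs w}^h})^\top$. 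Adding and subtracting the data $(\bs\xi^0,\dot{\bs\xi}^1)^\top=\mathrm{\bs J}(\partial_t)\dot{\bs\xi}$ shows this equals $\mathrm{\bs G}(\partial_t)\mathrm Q_\kappa^{-1}(\bs\lambda^h-\mathrm{\bs J}(\partial_t)\dot{\bs\xi})$, which is the first half of~\eqref{eq:377}. Next, applying the exterior trace and normal-derivative operators to this representation and invoking~\eqref{eq:jump3}, I would compute $\prodtraceext{\bs u^h}$ and $\prodnormalderext{}\bs u^h$ in terms of $(\mathrm{\bs C}(\partial_t)-\tfrac12\mathrm I)$ acting on $\mathrm Q_\kappa^{-1}(\bs\lambda^h-\mathrm{\bs J}(\partial_t)\dot{\bs\xi})$; similarly the average trace and normal derivative produce $\mathrm{\bs C}(\partial_t)$ applied to the same argument.

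The membership~\eqref{eq:36a} comes directly from the boundary conditions encoded in $\bs B_h(\bs u^h,\bs w^h)=\bs N_h\bs\xi$: the relaxed transmission conditions~\eqref{eq:probMain:b} state precisely that $\prodtracejump{\bs u^h}+\bs\xi^0\in\YY_h$ and $\prodnormaltracejump{\bs w^h}+\bs\xi^1\in\XX_h$; differentiating the latter in time (and using $\dot{\bs w}^h=\bs T_\kappa\nabla\bs u^h$, so that $\prodnormaltracejump{\dot{\bs w}^h}=\prodnormalderjump{\bs T_\kappa\bs u^h}$) gives $\bs\lambda^h\in\YY_h\times\XX_h$, modulo a careful check that the scaling $\bs T_\kappa$ versus $\mathrm Q_\kappa$ is accounted for correctly. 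For the variational identity~\eqref{eq:36b}, I would test against $\bs\varpi\in\XX_h\times\YY_h$. The key point is that the conditions $\prodtraceext{\bs u^h}\in\XX_h^\circ$ and $\prodnormaltraceext{\bs w^h}\in\YY_h^\circ$ from~\eqref{eq:probMain:c} mean exactly that the exterior traces pair to zero against $\XX_h$ and $\YY_h$ respectively; so pairing the exterior-trace expression (which via~\eqref{eq:jump3} involves $\mathrm{\bs C}(\partial_t)-\tfrac12\mathrm I$ applied to $\mathrm Q_\kappa^{-1}(\bs\lambda^h-\mathrm{\bs J}(\partial_t)\dot{\bs\xi})$) against $\bs\varpi$ gives zero, and rearranging the $\mathrm{\bs C}(\partial_t)$ terms onto opposite sides produces~\eqref{eq:36b}. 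Uniqueness of $\bs\lambda^h$ follows because the map in~\eqref{eq:377} is injective on $\YY_h\times\XX_h$ (via~\eqref{eq:jump1}, the jump of $\mathrm{\bs G}(\partial_t)$ applied to a density recovers that density), so two solutions of~\eqref{eq:36} yielding the same $(\bs u^h,\bs w^h)$ must coincide.

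For the converse, given $\bs\lambda^h$ solving~\eqref{eq:36} I would define $(\bs u^h,\bs w^h)$ by~\eqref{eq:377} and verify~\eqref{eq:33.5}. That $(\dot{\bs u}^h,\dot{\bs w}^h)=\bs A_\star(\bs u^h,\bs w^h)$ is essentially automatic: $\bs u^h=\mathrm{\bs G}(\partial_t)(\cdots)$ is a superposition of retarded layer potentials, each of which solves the homogeneous resolvent/wave equation off $\partial\Omega_\ell$, so $\ddot{\bs u}^h_\ell=m_\ell^2\laplace\bs u^h_\ell=c_\ell^2\kappa_\ell\laplace\bs u^h_\ell$ on $\R^d\setminus\partial\Omega_\ell$, and setting $\bs w^h=\bs T_\kappa\nabla\partial_t^{-1}\bs u^h$ splits this second-order equation into the desired first-order pair (using that the potentials are causal so $\partial_t^{-1}$ is well-defined). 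For the boundary condition $\bs B_h(\bs u^h,\bs w^h)=\bs N_h\bs\xi$, I would compute each of the four components of $\bs B_h$ using~\eqref{eq:jump1} and~\eqref{eq:jump3}: the jump of $\bs u^h$ plus $\bs\xi^0$ equals the first component of $\bs\lambda^h$, which lies in $\YY_h$ by~\eqref{eq:36a}; similarly for the normal-trace jump of $\bs w^h$; and the exterior traces are handled by~\eqref{eq:36b}, where the freedom to test against all of $\XX_h\times\YY_h$ forces $\prodtraceext{\bs u^h}\in\XX_h^\circ$ and $\prodnormaltraceext{\bs w^h}\in\YY_h^\circ$.

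The main obstacle I anticipate is the bookkeeping around the operator $\mathrm Q_\kappa$, the anti-differentiation operator $\mathrm{\bs J}(\partial_t)$, and the ``peculiar form'' of the data $\mathrm{\bs J}(\partial_t)\dot{\bs\xi}=(\bs\xi^0,\dot{\bs\xi}^1)$: one must be scrupulous about which quantities are differentiated in time, which are scaled by $\kappa_\ell$ versus the wave-speed factors $m_\ell=c_\ell\sqrt{\kappa_\ell}$ hidden inside $\mathrm{\bs S}_\ell(\partial_t)$ and $\mathrm{\bs D}_\ell(\partial_t)$, and why the symmetrized form $\mathrm Q_\kappa\mathrm{\bs C}(\partial_t)\mathrm Q_\kappa^{-1}$ appears in~\eqref{eq:36b}. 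A secondary technical point is making the distributional/causal framework rigorous — matching the hypothesis that $(\bs u^h,\bs w^h)$ is a $\vV$-valued causal distribution with the mapping properties of $\mathrm{\bs G}(\partial_t)$ and $\partial_t^{-1}$ on causal distributions — but since the statement explicitly works at the level of Laplace-transformable causal distributions and defers to~\cite{sayas_book} for the analytic underpinnings, I would invoke those results rather than reprove them, and focus the written proof on the algebraic identities~\eqref{eq:3.2}, \eqref{eq:jump1}, \eqref{eq:jump3} together with the polarity relations $(\XX_h^\circ)^\circ=\XX_h$, $(\YY_h^\circ)^\circ=\YY_h$ already established in Lemma~\ref{lemma:1}.
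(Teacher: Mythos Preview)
Your proposal is correct and follows essentially the same route as the paper: use Kirchhoff's formula~\eqref{eq:3.2} to obtain the representation~\eqref{eq:377}, then apply~\eqref{eq:jump3} to the exterior traces and combine with the boundary conditions~\eqref{eq:probMain:b}--\eqref{eq:probMain:c} to derive~\eqref{eq:36}. The one step the paper makes more explicit than your ``rearranging'' is that the extra term $-\tfrac12\langle\bs\lambda^h,\bs\varpi\rangle$ arising when you expand $(\mathrm{\bs C}(\partial_t)-\tfrac12\mathrm I)\mathrm Q_\kappa^{-1}\bs\lambda^h$ vanishes upon pairing with $\bs\varpi\in\XX_h\times\YY_h$ precisely because $\bs\lambda^h\in\YY_h\times\XX_h\subseteq\XX_h^\circ\times\YY_h^\circ$; this is the inclusion you already flagged via the polarity relations, so you have all the ingredients.
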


\begin{proof} %The proof is quite simple. 
First of all the boundary conditions in \eqref{eq:33.5} are equivalent to
\begin{subequations}\label{eq:35}
\begin{alignat}{6}
\label{eq:35b}
& \prodtracejump{\bs u^h}+\bs\xi^0\in \YY_h, & \quad 
	& \prodnormaltracejump{\dot{\bs w}^h}+\dot{\bs\xi}^1 \in \XX_h, \\
\label{eq:35c}
& \prodtraceext \bs u^h\in \XX_h^\circ, & 
	& \prodnormaltraceext{\dot{\bs w}^h}\in \YY_h^\circ.
\end{alignat}
\end{subequations}
(Compare with \eqref{eq:probMain} and note that we have differentiated the conditions related to $\bs w^h$ for later convenience.)
Given a solution to \eqref{eq:33.5}, we can use \eqref{eq:3.2} to write the pair $(\bs u^h,\bs w^h)$  in the form \eqref{eq:377}. The condition \eqref{eq:35b} is equivalent to $\bs\lambda^h\in \YY_h\times \XX_h$, while \eqref{eq:35c} is equivalent (using \eqref{eq:jump3}) to 
\begin{equation}\label{eq:37}
\mathrm Q_\kappa (\mathrm {\bs C}(\partial_t)-\tfrac12\mathrm I)
	\mathrm Q_\kappa^{-1}(\bs\lambda^h-\mathrm{\bs J}(\partial_t)\dot{\bs\xi})
	\in \XX_h^\circ\times \YY_h^\circ.
\end{equation}
However, since $\YY_h\times\XX_h\subseteq \XX_h^\circ\times \YY_h^\circ$, \eqref{eq:36a} and \eqref{eq:37} are equivalent to \eqref{eq:36a} and
\begin{equation}\label{eq:38}
\mathrm Q_\kappa \mathrm{\bs C}(\partial_t)\mathrm Q_\kappa^{-1}\bs\lambda^h
-\mathrm Q_\kappa (\mathrm{\bs C}(\partial_t)-\tfrac12\mathrm I)
	\mathrm Q_\kappa^{-1}\mathrm{\bs J}(\partial_t)\dot{\bs\xi}
	\in \XX_h^\circ\times \YY_h^\circ.
\end{equation}
But \eqref{eq:38} is just a short hand version of \eqref{eq:36b}. The proof of the reciprocal statement is very similar.
\end{proof}

The estimates of Theorem \ref{thm:26} hold for the solution of \eqref{eq:36} if we prove (which can be easily done using the techniques of 
\cite[Sect.~{3}]{sayas_new_analysis}) that the strong solution of \eqref{eq:probMain}, extended by zero to negative times, is the distributional solution of \eqref{eq:35}. 

\begin{theorem}
\label{thm:approximation_estimate_in_space}
Assume that
$\bs \xi \in \mathcal{C}^{6}([0,\infty);\mathbb{B})$ with
$\bs\xi^{(\ell)}(0)=0$ for $\ell=0,\dots,5$.
Let $\bs\lambda=(\bs\psi,\bs\phi)^\top$ be the solution of
\begin{subequations}\label{eq:39}
\begin{alignat}{6}
& \bs\lambda \in \YY\times \XX \\
& \langle \mathrm Q_\kappa \mathrm{\bs C}(\partial_t)\mathrm Q_\kappa^{-1}\bs\lambda,
	\bs\varpi\rangle = 
	\langle \mathrm Q_\kappa (\mathrm{\bs C}(\partial_t)-\tfrac12\mathrm I)
	\mathrm Q_\kappa^{-1}\mathrm{\bs J}(\partial_t)\dot{\bs\xi},\bs\varpi\rangle 
	\qquad\forall \bs\varpi\in \XX\times \YY,
\end{alignat}
\end{subequations}
 and let $\bs\lambda^h=(\bs\psi^h,\bs\phi^h)^\top$ be
the solution of \eqref{eq:36}. Consider the associated potentials
\[
\bs u=\mathrm{\bs G}(\partial_t) \mathrm Q_\kappa^{-1}
	(\bs\lambda-\mathrm{\bs J}(\partial_t)\dot{\bs\xi}), \qquad
\bs u^h=\mathrm{\bs G}(\partial_t) \mathrm Q_\kappa^{-1}
	(\bs\lambda^h-\mathrm J(\partial_t)\dot{\bs\xi}).
\]
Then: 
\begin{alignat*}{6}
\| \bs u(t)-\bs u^h(t)\|_{\vV} &\le C t 
\sum_{j=0}^2 \max_{0\le \tau\le t}  
	\|(\bs\psi^{(j+1)},\bs\phi^{(j)})(\tau)
		-\Pi(\bs\psi^{(j+1)},\bs\phi^{(j)})(\tau)\|_{\mathbb B},\\
\| \bs\psi(t)-\bs\psi^h(t)\|_{\HH^{1/2}} &\le C t  
\sum_{j=0}^2 \max_{0\le \tau\le t}
	\|(\bs\psi^{(j+1)},\bs\phi^{(j)})(\tau)
		-\Pi(\bs\psi^{(j+1)},\bs\phi^{(j)})(\tau)\|_{\mathbb B},\\
\| \bs\phi(t)-\bs\phi^h(t)\|_{\HH^{-1/2}} &\le C t  
\sum_{j=1}^3 \max_{0\le \tau\le t}
	\|(\bs\psi^{(j+1)},\bs\phi^{(j)})(\tau)
		-\Pi(\bs\psi^{(j+1)},\bs\phi^{(j)})(\tau)\|_{\mathbb B}.
\end{alignat*}
where $\Pi:\mathbb B\to \YY_h\times \XX_h$ is the best approximation operator onto $\YY_h\times \XX_h$.
\end{theorem}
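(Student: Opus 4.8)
The plan is to run a Céa/Strang-type Galerkin argument entirely inside the time-domain evolution framework of Section~\ref{sect:analyzable_form}, avoiding Laplace-domain mapping properties of the boundary integral operators: the error of the potentials solves the \emph{same} abstract evolution problem \eqref{eq:25} as $(\bs u^h,\bs w^h)$, but now with a right-hand side that is exactly the best-approximation error of the continuous densities, and the stability bounds behind Theorem~\ref{thm:26} then close the estimate. Concretely, I set $\bs e_u:=\bs u-\bs u^h$, $\bs e_w:=\bs w-\bs w^h$, with $\bs w,\bs w^h$ recovered from $\bs u,\bs u^h$ as in \eqref{eq:377}. Applying the reciprocal implication of Theorem~\ref{thm:32} once to $\bs\lambda^h$ and once to $\bs\lambda$ (the latter being the case $\XX_h=\XX$, $\YY_h=\YY$), both pairs satisfy the first-order system in \eqref{eq:33.5}, so by linearity $(\bs e_u,\bs e_w)$ solves $(\dot{\bs e}_u,\dot{\bs e}_w)=\bs A_\star(\bs e_u,\bs e_w)$ with vanishing initial data. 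Hence $(\bs e_u,\bs e_w)$ fits the abstract problem \eqref{eq:25} with a prescribed $\mM$-valued boundary datum $\bs\zeta_{\mathrm{err}}:=\bs B_h(\bs e_u,\bs e_w)$, and the $\mM$-valued stability estimates \eqref{eq:thm26-10} (and their shifted variants behind \eqref{eq:28}) apply once $\bs\zeta_{\mathrm{err}}$ is identified.

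The core of the proof is to compute $\bs\zeta_{\mathrm{err}}$ and recognise it as a projection error. Since the two representations \eqref{eq:377} share the data term $\mathrm{\bs J}(\partial_t)\dot{\bs\xi}$, it cancels and $\bs e_u=\mathrm{\bs G}(\partial_t)\mathrm Q_\kappa^{-1}(\bs\lambda-\bs\lambda^h)$; the jump relations \eqref{eq:jump1}, \eqref{eq:jump3} then give $\prodtracejump{\bs e_u}=\bs\psi-\bs\psi^h$ and $\prodnormaltracejump{\dot{\bs e}_w}=\bs\phi-\bs\phi^h$, whence $\prodnormaltracejump{\bs e_w}=\partial_t^{-1}(\bs\phi-\bs\phi^h)$. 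Because $\bs\psi^h\in\YY_h$ and $\bs\phi^h\in\XX_h$, the first two slots of $\bs B_h(\bs e_u,\bs e_w)$ --- restrictions to the polar sets $\YY_h^\circ$, $\XX_h^\circ$ --- reduce to the restrictions of $\bs\psi$ and $\partial_t^{-1}\bs\phi$ alone; the two remaining ``natural'' slots vanish identically, since the equivalence of Theorem~\ref{thm:32} (which encodes the Galerkin orthogonality obtained by subtracting \eqref{eq:36b} from the variational equation in \eqref{eq:39}) makes both $\bs u^h$ and $\bs u$ satisfy the exterior-trace conditions of \eqref{eq:35} relative to their respective spaces, and $\XX^\circ\subseteq\XX_h^\circ$, $\YY^\circ\subseteq\YY_h^\circ$. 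Thus $\bs\zeta_{\mathrm{err}}=(\bs\psi|_{\YY_h^\circ},\,\partial_t^{-1}\bs\phi|_{\XX_h^\circ},\,0,\,0)$. The duality identity $\|f|_{S^\circ}\|_{(S^\circ)'}=\operatorname{dist}(f,S)$, valid for a closed subspace with $(S^\circ)^\circ=S$ (the Hahn--Banach argument recalled in the proof of Lemma~\ref{lemma:1}, applied to $S=\YY_h,\XX_h$), then identifies the $\mM$-norm of each slot, and of every time derivative of $\bs\zeta_{\mathrm{err}}$, with a best-approximation error measured by $\Pi$.

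It then remains to substitute this into \eqref{eq:thm26-10} together with $\|(\bs v,\bs p)\|_\vV\sim\|\bs A_\star(\bs v,\bs p)\|_\hH+\|(\bs v,\bs p)\|_\hH$, and to account for derivative orders. Because $\Pi$ commutes with $\partial_t^{\pm1}$ and $\bs\psi,\bs\phi$ and their low-order derivatives vanish at $t=0$ (from $\bs\xi^{(\ell)}(0)=0$, $\ell\le 5$, via Theorem~\ref{thm:26} applied to \eqref{eq:39}), the contributions $\|(I-\Pi)\bs\psi(\tau)\|$ and $\|(I-\Pi)\partial_t^{-1}\bs\phi(\tau)\|$ are controlled by (time integrals of) the best-approximation errors of $\dot{\bs\psi}$ and $\bs\phi$, so $\|\bs\zeta_{\mathrm{err}}^{(j)}(\tau)\|_\mM$ gets bounded by $\|(\bs\psi^{(j+1)},\bs\phi^{(j)})(\tau)-\Pi(\bs\psi^{(j+1)},\bs\phi^{(j)})(\tau)\|_{\mathbb B}$. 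This yields the estimate for $\bs u-\bs u^h=\bs e_u$ in $\vV$; the estimate for $\bs\psi-\bs\psi^h=\prodtracejump{\bs e_u}$ in $\HH^{1/2}_\Gamma$ follows from boundedness of the jump trace $\HH\to\HH^{1/2}_\Gamma$; and the estimate for $\bs\phi-\bs\phi^h=\prodnormaltracejump{\dot{\bs e}_w}$ in $\HH^{-1/2}_\Gamma$ requires $\dot{\bs e}_w$, i.e.\ one further time derivative of the error (which solves the analogous problem with datum $\dot{\bs\zeta}_{\mathrm{err}}$), which is precisely why that bound carries the derivative indices $j=1,2,3$ rather than $0,1,2$.

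I expect the main obstacle to be exactly the second step: pinning down $\bs\zeta_{\mathrm{err}}$. One must keep consistent the bookkeeping of the partial anti-differentiation $\mathrm{\bs J}(\partial_t)$, the scaling $\mathrm Q_\kappa$, the polar-set restrictions built into $\bs B_h$ and the norm of $\mM$, and the Galerkin orthogonality, so that $\bs\zeta_{\mathrm{err}}$ emerges \emph{exactly} as the best-approximation error of the continuous densities with no spurious loss, and then match the resulting combination of projection errors, time derivatives and powers of $t$ against the stated right-hand sides. Verifying the compatibility conditions at $t=0$ needed to invoke Theorem~\ref{thm:26} for the error problem is routine but should not be skipped.
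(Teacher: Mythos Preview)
Your proposal is correct and follows essentially the same strategy as the paper: both arguments recognize that the error pair $(\bs e_u,\bs e_w)$ solves the abstract first-order system with boundary data given, up to elements of $\YY_h\times\XX_h$, by the continuous traces $(\bs\psi,\partial_t^{-1}\bs\phi)$, and then invoke the stability estimate of Theorem~\ref{thm:26} together with the trace theorem and the shifted estimate~\eqref{eq:28}. The only cosmetic difference is that the paper exploits the freedom to subtract an arbitrary $(\bs\chi_h,\bs\mu_h)\in\YY_h\times\XX_h$ from the data before applying Theorem~\ref{thm:26} and then optimizes via $\Pi$, whereas you compute $\bs B_h(\bs e_u,\bs e_w)$ directly and appeal to the duality identity $\|f|_{S^\circ}\|_{(S^\circ)'}=\operatorname{dist}(f,S)$; these are two phrasings of the same fact.
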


\begin{proof}
    We consider the difference $\bs e:=(\bs e_1,\bs e_2):= (\bs u,\bs w) -  ( \bs u^h,\bs w^h)$.
  This function solves the differential equation $\dot{\bs e}=\AA_{\star} \bs e$, and the transmission conditions satisfied by $\bs u^h$ give
  the following transmission conditions for $\bs e$:
  \begin{alignat*}{3}
    \prodtracejump{\bs{e}_1}(t) &- \prodtracejump{u}(t) - \bs \xi^0(t) \in \YY_h,&
    \qquad  \prodnormaltracejump{\bs e_{2}}(t) &- \prodnormaltracejump{\bs v}(t) - \bs \xi^1(t) \in \XX_h, \\
    \prodtraceext{\bs{e}_1}(t) &\in \XX_h^{\circ},& \qquad \prodnormaltraceext{\bs e_2}(t) &\in \YY_h^{\circ}
  \end{alignat*}
  for all $t\geq 0$.
  Secondly, we notice that these conditions are invariant under subtracting discrete functions, i.e., for 
  $\bs\chi_h(t) \in \YY_h$, $\bs\mu_h(t) \in \XX_h$, they are equivalent to the  following conditions:  
    \begin{alignat*}{3}
    \prodtracejump{\bs{e}_1}(t) &- \bs \psi(t) + \bs \chi_h(t) \in \YY_h,&
    \qquad  \prodnormaltracejump{\bs e_2}(t) &- \partial_t^{-1}\bs \phi(t) + \bs \mu_h(t) \in \XX_h, \\
    \prodtraceext{\bs{e}_1}(t) &\in \XX_h^{\circ},& \qquad \prodnormaltraceext{\bs e_2}(t) &\in \YY_h^{\circ},
  \end{alignat*}
  where we  also inserted the definitions of $\bs \psi$ and $\bs \phi$ to shorten notation.
  This is structurally the same as~\eqref{eq:probMain}.
  Using the best approximation operator $\Pi$, setting $(\bs \chi_h(t),\bs \mu_h(t)):=\Pi \bs \lambda(t)$
  and applying the stability estimate of Theorem~\ref{thm:26} gives the
  estimate for $\bs{u}-\bs{u}^h$.
  The bound for $\|\bs\psi(t)-\bs\psi^h(t)\|_{\HH^{1/2}}=\| \prodtracejump{\bigl(\bs u(t) -\bs u^h(t)\bigr)}\|_{\HH^{1/2}}$ follows from the trace theorem. Finally, the bound for
\[
\|\bs\phi(t)-\bs\phi^h(t)\|_{\HH^{-1/2}}=\|\mathrm Q_\kappa
\prodnormaltracejump{\bigl(\dot{\bs w}(t)-\dot{\bs w}^h(t)\bigr)}\|_{\HH^{-1/2}}
\le C \|  \dot{\bs w}(t)-\dot{\bs w}^h(t)\|_{\HHdiv}
\]
requires \eqref{eq:28}.
The requirements on $\bs \xi$ are such that the exact traces $\bs \psi$ and $\bs\phi$
  have the required regularity by Theorem~\ref{thm:26}.
\end{proof}

  \begin{remark}
    We point out out that the formulation~\eqref{eq:36} is just a time-domain version
    of the formulation introduced by von~Petersdorff in~\cite{von_petersdorff}. The only
    minor difference compared to a straight-forward adaptation
    is that we use $\dot{\bs \xi}$ instead of $\bs \xi$ as the data; see also~\eqref{eq:the_bad_method}.
  \end{remark}

\section{Time discretization - Runge Kutta convolution quadrature}
\label{sect:rkcq}

An implicit Runge-Kutta method with $m$ stages is given by a matrix $\rkA\in \R^{m\times m}$ and two vectors $\rkb,\rkc\in \R^m$. Its stability function is the rational function $r(z):=1+z\rkb^\top (I-z\rkA)^{-1}\ones$, where $\ones:=(1,\ldots,1)^\top$. In everything that follows, we will always assume that $\rkA$ is invertible, which is a necessary condition to be in the framework of RK-based convolution quadrature methods. Therefore, the limit $r(\infty)=\lim_{z\to \infty} r(z)=1-\rkb^\top \rkA^{-1}\ones$ exists. We say that the RK method is:
\begin{enumerate}
\item[(a)] A-stable when $|r(\imath t)|\le 1$ for all $t\in \R$,
\item[(b)] strictly A-stable when $|r(\imath t)|<1$ for $t\in \R\setminus\{0\}$ and $r(\infty)<1$,
\item[(c)] stiffly accurate, when $\rkb^\top\rkA^{-1}=(0,\ldots,0,1)$ and therefore $c_m=1$ and $r(\infty)=0$. 
\end{enumerate}
We will assume that the stage order of the RK method is $q$, while its classical order is $p\ge q$. The methods of the Radau IIa family of RK methods have invertible matrix $\rkA$, are strictly A-stable and stiffly accurate. 
These methods are standard for applications in convolution quadrature, despite their
  damping properties, which are not ideal for wave equations.
  This is in part due to the fact that the standard theory (see, e.g.,~\cite{BanLM})
  makes some assumptions not satisfied by the Gauss methods.
  We also would like to point out that in higher order methods
  the dissipation and dispersion is much better controlled than for the low order
  cousins~\cite[Section~{4.3}]{banjai_schanz}, which is another good reason for utilizing
  Runge-Kutta methods for wave propagation applications.

\subsection{The fully discrete method}

In Section \ref{sect:integral_equations} we have introduced operators $\mathrm H(\partial_t)$ (with $\mathrm H\in \{\mathrm{\bs C},{\mathrm{ \bs G}},{\mathrm{ \bs J}}\}$) such that there exists an analytic function $\mathrm H:\C_+\to \mathcal B(\mathcal Z_1,\mathcal Z_2)$ (here $\C_+:=\{ z\in \C\,:\, \mathrm{Re}\,z>0\}$ and $\mathcal B(\mathcal Z_1, \mathcal Z_2)$ is the space of bounded 
linear operators between two Hilbert spaces) such that 
\[
\mathscr L\{ \mathrm H(\partial_t)\xi\}=\mathrm H\, \mathscr L\{\xi\}.
\]
We can then expand
\[
\mathrm H\left( k^{-1} \left(\rkA-\frac{z}{1-z}\ones\rkb^\top\right)^{-1}\right)
	= \sum_{j=0}^\infty z^j \mathrm H_j,
\]
where evaluating $\mathrm H$ with a matrix as its argument can be done with Riesz-Dunford calculus (see \cite[Chap.~{11}]{golub_van_loan}
or~\cite[Chap.~{VIII.7}]{yosida_fana}), and the series is a Maclaurin expansion of an analytic function with coefficients $\mathrm H_j\in \mathcal B(\mathcal Z_1^m, \mathcal Z_2^m)$. Note that for $\abs{z}<1$ the spectrum of $\rkA-(1-z)^{-1}z \ones\rkb^\top$
is contained in $\C_+$ for every A-stable RK method with invertible $\rkA$ by \cite[Lemma~{3}]{BanLM}.
Given a sequence of vectors $\Xi:=(\Xi_{n})_{n=0}^{\infty} \subseteq \mathcal{Z}_1^m$, the discrete convolution defined by the above sequence of operators 
\[
Y_n:= \sum_{j=0}^n \mathrm H_j \Xi_{n-j}, \qquad n\ge 0,
\]
transforms sequences in $\mathcal Z_1^m$ into sequences in $\mathcal Z_2^m$ and will be denoted $Y= H(\partial_k)\Xi$. Additionally, we can produce a sequence in $\mathcal Z_2$ in the postprocessed form
\begin{equation}\label{eq:4.0}
y_0:=0, \qquad 
y_n:=r(\infty)y_{n-1}+\rkb^\top \rkA^{-1} Y_{n-1}, \quad n\ge 1,
\end{equation}
which in the case of stiffly accurate RK methods just delivers the sequence with the $m$-th components of $\{Y_n\}$, namely, $y_n=(0,\ldots,0,1) Y_{n-1}$. The postprocessing step described in \eqref{eq:4.0} will be denoted $\{ y_n\}=\postproc\,\{Y_n\}$. The computation of $y=\partial_k^{-1} \Xi$ (the RK-CQ method when $\mathrm H(s)=s^{-1} \mathrm I$) can be easily seen to be equivalent to the recurrence 
\begin{equation}\label{eq:4.00}
y_0:=0, 
	\qquad
Y_n=\ones y_n+k\rkA \Xi_n,
	\quad
y_{n+1}=r(\infty) y_n+\rkb^\top \rkA^{-1} Y_n, \quad n\ge 0,
\end{equation}
which computes the `postprocessed' sequence simultaneously. When $\Xi_n=\xi(t_n+\rkc k):=(\xi(t_n+c_1\,k),\ldots,\xi(t_n+c_m\,k))^\top$, this is just 
the application of the RK method to 
\[
\dot y(t)=\xi(t), \quad t\ge 0, \qquad y(0)=0,
\]
which we can write as the operator equation $y=\partial_t^{-1}\xi$. In \eqref{eq:4.0} and \eqref{eq:4.00} we have used the product of scalar matrices by elements of $\mathcal Z_2^m$, which has to be understood as
taking linear combinations of elements of $\mathcal Z_2$ using  the coefficients of the matrix.
We will also use the following instance of Kronecker products: given $R\in \mathcal B(\mathcal Z_1,\mathcal Z_2)$ we denote
\begin{align}
\label{eq:def_breve}
\breve R
	:=I_{m\times m}\otimes R
	:=\begin{bmatrix} R \\ & \ddots \\ & & R\end{bmatrix}
		\in \mathcal B(\mathcal Z_1^m;\mathcal Z_2^m). 
\end{align}

The fully discrete numerical method that we propose and analyze is an RK-CQ discretization of \eqref{eq:36}, followed by the RK-CQ discretization of the potentials \eqref{eq:377}. We start by sampling the data
\begin{subequations}\label{eq:4.1}
\begin{equation}
\dot{\bs\Xi}^k:=\{ \dot{\bs\xi}(t_n+\rkc k)\}_{n=0}^\infty, \qquad t_n:=n\,k.
\end{equation}
Next we compute a sequence
\begin{equation}\label{eq:4.1a}
\bs\Lambda^{h,k}=\{ \bs\Lambda^{h,k}_n\}_{n=0}^\infty, 
	\qquad \bs\Lambda^{h,k}_n\in (\YY_h\times \XX_h)^m,
\end{equation}
satisfying 
\begin{equation}
\label{eq:4.1b}
\langle \breve{\mathrm Q}_\kappa \mathrm{\bs C}(\partial_k)\breve{\mathrm Q}_\kappa^{-1}
	\bs\Lambda^{h,k},
	\bs\varpi\rangle = 
	\langle \breve{\mathrm Q}_\kappa (\mathrm{\bs C}(\partial_k)-\tfrac12\mathrm I)
	\breve{\mathrm Q}^{-1}_\kappa\mathrm{\bs J}(\partial_k)\dot{\bs\Xi}^k,\bs\varpi\rangle 
	\qquad\forall \bs\varpi\in (\XX_h\times \YY_h)^m.
\end{equation}
The expression \eqref{eq:4.1b} represents a discrete convolutional system that yields the different time-values of the sequence $\bs\Lambda^{h,k}$ as a recursion. Each time step requires the solution of a square linear system of equations with $m(\mathrm{dim}\,\YY_h+\mathrm{dim}\,\XX_h)$ unknowns. We finally compute 
\begin{equation}\label{eq:4.1c}
\bs U^{h,k}=\mathrm{\bs G}(\partial_k)\breve{\mathrm Q}_\kappa^{-1}
(\bs\Lambda^{h,k}-\mathrm {\bs J}(\partial_k)\dot{\bs\Xi}^k),
\quad
\bs{W}^{h,k}=\breve{\bs T}_\kappa \breve\nabla\partial_k^{-1} \bs U^{h,k}.
\end{equation}

Corresponding to these stage vectors, we can then define the approximations at the endpoints via
\begin{align}
  \label{eq:def_postprocessed_fd}
  \bs \lambda^{h,k}&:=(\bs \psi^{h,k},\bs \phi^{h,k}):=\postproc{\Lambda^{h,k}}, \quad
  \bs u^{h,k}=\postproc{\bs U^{h,k}}, \quad \text{and } \quad \bs w^{h,k}:=\postproc{\bs W^{h,k}}.
\end{align}
\end{subequations}
(Here we committed the slight abuse of notation and identified  $(\XX_h\times \YY_h)^m$ with $(\XX_h^m\times \YY_h^m)$.)

Our next effort is to relate \eqref{eq:4.1} with a discretization of a certain IBVP related to the pair $\bs x^h:=(\bs u^h,\bs w^h)$, in the same way that Theorem \ref{thm:32} related the semidiscrete system of TDBIE \eqref{eq:36}, postprocessed with the retarded potential expressions \eqref{eq:377} to a weak-in-time version of \eqref{eq:25}. 
In strong form, $\bs x^h:=(\bs u^h,\bs w^h):[0,\infty)\to \vV$ satisfies
\begin{equation}\label{eq:4.4}
\dot{\bs x}^h(t)=\bs A_\star \bs x^h(t), \qquad 
\bs B_h\dot{\bs x}^h(t)=\bs N_h\dot{\bs\xi}(t), \qquad
\bs{x}^h(0)=0,
\end{equation}
which is equivalent to \eqref{eq:25}. The boundary condition can equivalently be written $\bs B {\bs x}^h=\partial_t^{-1} \bs{N}_h\dot{\bs\xi}=\bs N_h\bs\xi$, but, as we have already mentioned, we will use $\dot{\bs\xi}$ as data. An RK-CQ approximation of \eqref{eq:4.4} simply substitutes time derivatives by $\partial_k$: 
\begin{equation}\label{eq:4.5}
\partial_k \bs{X}^{h,k}=\breve{\bs A_\star} \bs{X}^{h,k},\qquad
\breve{\bs B}_h \partial_k \bs{X}^{h,k}=\breve{\bs N}_h \dot{\bs\Xi}^k, \qquad
\bs x^{h,k}=\postproc \bs{X}^{h,k}.
\end{equation}
This can also be written in RK form
\begin{subequations}
  \label{eq:4.5_rk_form}
\begin{alignat}{6}
& \bs{x}^{h,k}_0:=0,\\
& \bs{X}^{h,k}_n=\ones \bs{x}^{h,k}_n+ k\rkA\breve{\bs A_\star} \bs{X}^{h,k}_n, \qquad
 \breve{\bs B}_h \bs{X}^{h,k}_n=\bs\Theta^k_n, \\
& \bs{x}^{h,k}_{n+1}=\bs{x}^{h,k}_n+k\rkb^\top\breve{\bs A_\star}\bs{X}^{h,k}_n
	=r(\infty) \bs{x}^{h,k}_n+ \rkb^\top \rkA^{-1} \bs{X}^{h,k}_n,
\end{alignat}
\end{subequations}
where $\bs\Theta^k=\{\bs\Theta^k_n\}:=\breve{\bs N_h} \partial_k^{-1}\dot{\bs\Xi}^k$.

\begin{proposition}
If $\bs\Lambda^{h,k}, \bs U^{h,k}, \bs W^{h,k}$ solve \eqref{eq:4.1}, then $\bs X^{h,k}:=(\bs U^{h,k},\bs W^{h,k})$ solves \eqref{eq:4.5}. Reciprocally, if $\bs X^{h,k}=(\bs U^{h,k},\bs W^{h,k})$ solves \eqref{eq:4.5}, then the shifted traces 
$\bs\Lambda^{h,k}
	:= (\llbracket \breve{\prodtrace} \bs U^{h,k}\rrbracket,
		\llbracket \breve{\boldsymbol\gamma}_\nu \bs W^{h,k}\rrbracket)^\top %%fixme
		+\partial_k^{-1} \dot{\bs\Xi}^k$
satisfy \eqref{eq:4.1a}-\eqref{eq:4.1b} and \eqref{eq:4.1c} holds.
\end{proposition}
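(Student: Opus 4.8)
The plan is to prove this as the fully discrete mirror of Theorem~\ref{thm:32}: I would transcribe that argument essentially verbatim, replacing the time derivative $\partial_t$ by the RK-CQ operator $\partial_k$ everywhere, and the (co)domain-matching identities of Section~\ref{sect:integral_equations} by their $\breve{\cdot}$-lifted versions. The single structural fact that makes this transcription legitimate — which I would first isolate as a short lemma — is that RK-CQ is an \emph{operational calculus}: for operator symbols $\mathrm H_1,\mathrm H_2$ with compatible ranges and domains one has $(\mathrm H_1\mathrm H_2)(\partial_k)\Xi=\mathrm H_1(\partial_k)\bigl(\mathrm H_2(\partial_k)\Xi\bigr)$ on stage-vector sequences, and operators with a $z$-independent symbol (in particular the $\breve{\cdot}$-lifted traces $\breve{\prodtrace}$, $\breve{\boldsymbol\gamma}_\nu$, $\breve\nabla$, and the diagonal scalings) commute through $\mathrm H(\partial_k)$. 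I would obtain both facts by passing to the $\mathscr Z$-transform $\ztrafo{\Xi}(z):=\sum_{n\ge 0}z^n\Xi_n$, where by construction $\ztrafo{\mathrm H(\partial_k)\Xi}(z)=\mathrm H\!\bigl(k^{-1}\Delta(z)\bigr)\ztrafo{\Xi}(z)$ with $\Delta(z):=\bigl(\rkA-\tfrac{z}{1-z}\ones\rkb^\top\bigr)^{-1}$; since $k^{-1}\Delta(z)$ has spectrum in $\C_+$ for $|z|<1$ by \cite[Lemma~3]{BanLM}, every symbol in play is evaluated at an admissible matrix and multiplicativity of the Riesz--Dunford calculus at a fixed matrix argument turns products of symbols into products of operators. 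In particular the operational identities \eqref{eq:jump1}, \eqref{eq:jump2}, \eqref{eq:jump3} and Kirchhoff's formula \eqref{eq:3.2} all carry over verbatim to the $\partial_k$-calculus.

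\emph{Forward direction.} Given $\bs\Lambda^{h,k},\bs U^{h,k},\bs W^{h,k}$ solving \eqref{eq:4.1}, I would set $\bs X^{h,k}:=(\bs U^{h,k},\bs W^{h,k})$ and first check $\partial_k\bs X^{h,k}=\breve{\bs A_\star}\bs X^{h,k}$: the representation of $\bs U^{h,k}$ in \eqref{eq:4.1c} as a CQ combined potential together with the $\partial_k$-versions of \eqref{eq:jump1} shows that $\bs U^{h,k}$ satisfies the discrete wave equation on each $\Omega_\ell$ and on $\R^d\setminus\overline{\Omega_\ell}$, which combined with $\bs W^{h,k}=\breve{\bs T}_\kappa\breve\nabla\partial_k^{-1}\bs U^{h,k}$ is the first-order pair $\partial_k\bs U^{h,k}=\breve{\bs T}_{c^2}\breve\nabla\cdot\bs W^{h,k}$, $\partial_k\bs W^{h,k}=\breve{\bs T}_\kappa\breve\nabla\bs U^{h,k}$. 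For the boundary conditions I would repeat the bookkeeping of Theorem~\ref{thm:32}: membership $\bs\Lambda^{h,k}\in(\YY_h\times\XX_h)^m$ supplies the two ``jump'' components of $\breve{\bs B}_h\partial_k\bs X^{h,k}$, while \eqref{eq:4.1b}, using $\YY_h\times\XX_h\subseteq\XX_h^\circ\times\YY_h^\circ$ and testing over $(\XX_h\times\YY_h)^m$, is equivalent to the membership $\breve{\mathrm Q}_\kappa\bigl(\mathrm{\bs C}(\partial_k)-\tfrac12\mathrm I\bigr)\breve{\mathrm Q}_\kappa^{-1}\bigl(\bs\Lambda^{h,k}-\mathrm{\bs J}(\partial_k)\dot{\bs\Xi}^k\bigr)\in(\XX_h^\circ\times\YY_h^\circ)^m$, which via \eqref{eq:jump3} in $\partial_k$-form is precisely the ``exterior-trace'' part of the discrete analogue of \eqref{eq:35}. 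Together these are the boundary conditions of \eqref{eq:4.5}.

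\emph{Converse.} Starting from a solution $\bs X^{h,k}$ of \eqref{eq:4.5}, the identity $\partial_k\bs X^{h,k}=\breve{\bs A_\star}\bs X^{h,k}$ gives the first-order pair again, so the $\partial_k$-Kirchhoff formula \eqref{eq:3.2} reproduces $\bs U^{h,k}$ as $\mathrm{\bs G}(\partial_k)\breve{\mathrm Q}_\kappa^{-1}\bigl(\llbracket\breve{\prodtrace}\bs U^{h,k}\rrbracket,\llbracket\breve{\boldsymbol\gamma}_\nu\partial_k\bs W^{h,k}\rrbracket\bigr)^\top$; with $\bs\Lambda^{h,k}$ as defined in the statement this is exactly \eqref{eq:4.1c}, and the boundary conditions of \eqref{eq:4.5}, read through \eqref{eq:jump3}, yield \eqref{eq:4.1a}--\eqref{eq:4.1b}. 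Uniqueness I would not argue separately: \eqref{eq:4.1b} is a square convolutional recursion, triangular in the time index, whose $n=0$ block is the Galerkin restriction of $\breve{\mathrm Q}_\kappa\mathrm{\bs C}(k^{-1}\Delta(0))\breve{\mathrm Q}_\kappa^{-1}$ and is invertible by the same ellipticity used for \eqref{eq:36} in Theorem~\ref{thm:32}.

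\emph{Expected obstacle.} Once the homomorphism property of the RK-CQ calculus is recorded, no new analytic estimate is needed and nothing here is hard in the analytic sense. I expect the only real friction to be purely notational: correctly tracking the anti-differentiation operators — $\mathrm{\bs J}(\partial_k)$ in \eqref{eq:4.1b}, the factors $\partial_k^{-1}$ in \eqref{eq:4.1c} and in the definition of $\bs\Lambda^{h,k}$, and the single extra $\partial_k$ carried by the $\bs w$-conditions of \eqref{eq:4.5} relative to \eqref{eq:4.1b} (exactly the differentiation already noted around \eqref{eq:35}) — so that the trace-built $\bs\Lambda^{h,k}$ simultaneously lies in $(\YY_h\times\XX_h)^m$, satisfies \eqref{eq:4.1b}, and makes \eqref{eq:4.1c} an identity.
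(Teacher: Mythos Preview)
Your proposal is correct and matches the paper's own proof, which is in fact terser than yours: it simply says to pass to the $Z$-transforms of \eqref{eq:4.1} and \eqref{eq:4.5}, where the problem reduces to the Laplace-domain version of Theorem~\ref{thm:32}, and refers to \cite{schroedinger} for an analogous computation. Your write-up spells out exactly this reduction --- the operational-calculus lemma via $\mathscr Z$-transform and Riesz--Dunford functional calculus at the matrix argument $k^{-1}\Delta(z)$ --- and then transcribes the bookkeeping of Theorem~\ref{thm:32}, which is precisely what the paper intends.
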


\begin{proof}
Taking the Laplace transforms of the corresponding continuous problems (Theorem \ref{thm:32}) and $Z$-transforms of the discrete problems \eqref{eq:4.1} and \eqref{eq:4.5}, we can easily prove the statement. See~\cite{schroedinger} for a detailed analogous computation.
\end{proof}

In the numerical experiments Section \ref{sect:numerics} , we will compare \eqref{eq:4.1} with a method that has $\mathrm{\bs{J}}(\partial_t)\bs\xi$ as data, i.e.,
where $\mathrm{\bs{J}}(\partial_t)$ is not discretized in the time variable. In this method we first sample
\begin{subequations}
  \label{eq:the_bad_method}
\begin{equation}
\bs\Sigma^k:=\{ (\bs\xi^0(t_n+\rkc k),{\dot{\bs\xi}^1}(t_n+\rkc k))\}_{n=0}^\infty,
\end{equation}
next look for
\begin{equation}
\bs\Lambda^{h,k}=\{ \bs\Lambda^{h,k}_n\}_{n=0}^\infty, 
	\qquad \bs\Lambda^{h,k}_n\in (\YY_h\times \XX_h)^m
\end{equation}
satisfying
\begin{equation}
\langle \breve{\mathrm Q}_\kappa \mathrm C(\partial_k)\breve{\mathrm Q}_\kappa^{-1}
	\bs\Lambda^{h,k},
	\bs\varpi\rangle = 
	\langle \breve{\mathrm Q}_\kappa (\mathrm C(\partial_k)-\tfrac12\mathrm I)
	\breve{\mathrm Q}^{-1}_\kappa\bs\Sigma^k,\bs\varpi\rangle 
	\qquad\forall \bs\varpi\in (\XX_h\times \YY_h)^m
\end{equation}
and finally postprocess by setting
\begin{equation}
\bs U^{h,k}=\{\bs U^{h,k}_n\}:=\mathrm G(\partial_k)\breve{\mathrm Q}_\kappa^{-1}
(\bs\Lambda^{h,k}-\bs\Sigma^k),
\quad
\bs W^{h,k}=\{\bs W^{h,k}_n\}:=\breve{\bs T}_\kappa \breve\nabla\partial_k^{-1} \bs U^{h,k}.
\end{equation}
\end{subequations}

\subsection{Some regularity theorems}
In this section we verify that the semidiscrete solution to~\eqref{eq:25} satisfies the assumptions
of the abstract RK-theory in~\cite{mallo_palencia_optimal_orders_rk,semigroups}.
\begin{lemma}\label{lemma:4.1}
The map $\lifting: \mathbb B\to \mathbb V$, given by $\lifting \bs\zeta:=(\bs u,\bs w)$, where
\begin{align}
  (\bs u,\bs w)&=\bs A_\star(\bs u,\bs w), \qquad \bs B_h(\bs u,\bs w)=\bs N_h\bs\zeta,
  \label{eq:def_lifting}
\end{align}
is well defined and bounded independently of the choice of the spaces $\XX_h$ and $\YY_h$.
\end{lemma}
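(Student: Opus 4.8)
The plan is to obtain the statement directly from Lemma~\ref{lemma:2}. First I would apply that lemma with the right-hand side $(\bs f,\bs g)=0$ and with boundary datum $\bs N_h\bs\zeta$, which is a legitimate element of $\mM$ since $\bs N_h:\mathbb B\to\mM$. Lemma~\ref{lemma:2} then produces a unique $(\bs u,\bs w)\in\vV$ with $(\bs u,\bs w)=\bs A_\star(\bs u,\bs w)$ and $\bs B_h(\bs u,\bs w)=\bs N_h\bs\zeta$, i.e., $\lifting$ is well defined as a map into $\vV$, together with the a~priori bound $\|(\bs u,\bs w)\|_{\hH}\le C\,\|\bs N_h\bs\zeta\|_{\mM}$, where $C$ depends only on the geometry and the physical parameters and, crucially, not on $\XX_h$ or $\YY_h$. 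Linearity of $\lifting$ then follows from this uniqueness together with the linearity of $\bs A_\star$, $\bs B_h$ and $\bs N_h$.

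Next I would convert this $\hH$-bound into the desired $\vV$-bound. Since $(\bs u,\bs w)$ solves $(\bs u,\bs w)=\bs A_\star(\bs u,\bs w)$, we have $\|\bs A_\star(\bs u,\bs w)\|_{\hH}=\|(\bs u,\bs w)\|_{\hH}$. Combining this with the norm equivalence $\|(\bs u,\bs w)\|_{\vV}\sim\|\bs A_\star(\bs u,\bs w)\|_{\hH}+\|(\bs u,\bs w)\|_{\hH}$ already invoked in the proof of Theorem~\ref{thm:26} yields $\|(\bs u,\bs w)\|_{\vV}\le C\,\|(\bs u,\bs w)\|_{\hH}$. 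This equivalence merely records that the weighted $L^2$-norms of $\bs u$, $\nabla\bs u$, $\bs w$ and $\nabla\cdot\bs w$ control the $\HH$- and $\HHdiv$-norms (and conversely), with constants governed by $\min_\ell\{c_\ell,\kappa_\ell\}$ and $\max_\ell\{c_\ell,\kappa_\ell\}$, hence independent of the discretization.

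Finally I would invoke~\eqref{eq:26}, i.e., $\|\bs N_h\bs\zeta\|_{\mM}\le\|\bs\zeta\|_{\mathbb B}$, to chain the estimates into $\|\lifting\bs\zeta\|_{\vV}=\|(\bs u,\bs w)\|_{\vV}\le C\,\|\bs\zeta\|_{\mathbb B}$, with a constant $C$ inheriting its independence of $\XX_h$ and $\YY_h$ from Lemma~\ref{lemma:2} and~\eqref{eq:26}.

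I do not expect a genuine obstacle here, since the lemma is essentially a repackaging of Lemma~\ref{lemma:2} in the $\vV$-norm. The only point deserving a line of care is the norm equivalence $\|\cdot\|_{\vV}\sim\|\bs A_\star\,\cdot\|_{\hH}+\|\cdot\|_{\hH}$ and the verification that its constants do not degenerate as $\XX_h,\YY_h$ vary; but this is immediate from the definitions of $\hH$, $\HH$, $\HHdiv$ and the fact that $\bs T_{c^2}$ and $\bs T_\kappa$ are diagonal operators with entries bounded above and below by fixed positive constants.
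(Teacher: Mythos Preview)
Your argument is correct and follows exactly the route the paper takes: the paper's proof is the single sentence ``It is a direct consequence of Lemma~\ref{lemma:2} and~\eqref{eq:26}.'' You have simply unpacked this, applying Lemma~\ref{lemma:2} with $(\bs f,\bs g)=0$, upgrading the $\hH$-bound to a $\vV$-bound via the equation $(\bs u,\bs w)=\bs A_\star(\bs u,\bs w)$, and using~\eqref{eq:26} to pass from $\|\bs N_h\bs\zeta\|_{\mM}$ to $\|\bs\zeta\|_{\mathbb B}$.
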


\begin{proof}
It is a direct consequence of Lemma \ref{lemma:2} and \eqref{eq:26}.
\end{proof}

We consider the spaces
\begin{equation}
\label{eq:hHmu}
\hH_\mu:=[\hH,\ker \bs B_h]_{\mu,\infty}, \qquad \mu\in (0,1), \qquad \text{ with } \|\cdot\|_{\ker \bs B_h}:=\|\cdot\|_{\hH} + \|\AA_{\star} \cdot\|_{\hH},
\end{equation}
obtained by the real interpolation method for Banach spaces (see \cite{tartar07,triebel95} or \cite[Appendix~B]{mclean}).
We recall that for two Banach spaces $\mathcal{X}_1 \subseteq \mathcal{X}_0$ with continuous embedding, the norm is given by:
\begin{align}
  \label{eq:definition_interpolation_norm}
  \norm{u}_{[\mathcal{X}_0,\mathcal{X}_1]_{\mu,\infty}}
  &:=\operatorname{ess\,sup}_{t>0}{\Big(t^{-\mu} \inf_{v \in \mathcal{X}_1} \left[\norm{u-v}_{\mathcal{X}_0} + t \norm{v}_{\mathcal{X}_1}\right]\Big)}.
\end{align}

\begin{lemma}
  \label{lemma:more_regularity}
  For $\mu \leq 1/2$,
  the map $\lifting$ of~\eqref{eq:def_lifting} is bounded from $\HH^{1/2}_\Gamma\times \HH^{-1/2+\mu}_\Gamma$ to $\hH_\mu$.
\end{lemma}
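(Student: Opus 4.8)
The plan is to estimate the interpolation quantity \eqref{eq:definition_interpolation_norm} of $\lifting\bs\zeta$ for the couple $(\hH,\ker\bs B_h)$ of \eqref{eq:hHmu} directly, by exhibiting a scale-dependent ``rough'' correction of $\lifting\bs\zeta$. Write $\bs\zeta=(\bs\zeta^0,\bs\zeta^1)$, $\|\bs\zeta\|_\ast:=\|\bs\zeta\|_{\HH^{1/2}_\Gamma\times\HH^{-1/2+\mu}_\Gamma}$, $(\bs u,\bs w):=\lifting\bs\zeta$, and suppose that for each $t\in(0,1]$ we have constructed $\bs r_t\in\vV$ with $\bs B_h\bs r_t=\bs N_h\bs\zeta$, $\|\bs r_t\|_{\hH}\lesssim t^{\mu}\|\bs\zeta\|_\ast$ and $\|\bs A_\star\bs r_t\|_{\hH}\lesssim t^{\mu-1}\|\bs\zeta\|_\ast$, with constants depending only on the geometry and on $c,\kappa$. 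Then $\bs k_t:=(\bs u,\bs w)-\bs r_t\in\ker\bs B_h$; since $(\bs u,\bs w)=\bs A_\star(\bs u,\bs w)$ by \eqref{eq:def_lifting} and Lemma~\ref{lemma:4.1} (with the equivalence $\|\cdot\|_{\vV}\sim\|\cdot\|_{\hH}+\|\bs A_\star\cdot\|_{\hH}$ used in the proof of Theorem~\ref{thm:26}) gives $\|(\bs u,\bs w)\|_{\hH}+\|\bs A_\star(\bs u,\bs w)\|_{\hH}\lesssim\|\bs\zeta\|_{\mathbb B}\le\|\bs\zeta\|_\ast$, the $K$-functional of \eqref{eq:hHmu} obeys $K\bigl(t,(\bs u,\bs w)\bigr)\le\|\bs r_t\|_{\hH}+t\|\bs k_t\|_{\ker\bs B_h}\lesssim t^{\mu}\|\bs\zeta\|_\ast$ for $t\le1$ (using $t\le t^\mu$), while trivially $K\bigl(t,(\bs u,\bs w)\bigr)\le\|(\bs u,\bs w)\|_{\hH}\lesssim\|\bs\zeta\|_\ast$ for $t\ge1$. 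Taking the supremum of $t^{-\mu}K\bigl(t,(\bs u,\bs w)\bigr)$ over $t>0$ yields $\|\lifting\bs\zeta\|_{\hH_\mu}\lesssim\|\bs\zeta\|_\ast$.

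It remains to build $\bs r_t$. Since $\bs N_h\bs\zeta=-(\bs\zeta^0|_{\YY_h^\circ},\bs\zeta^1|_{\XX_h^\circ},0,0)$, it is enough to enforce the \emph{full} (hence $\XX_h,\YY_h$-independent) identities $\prodtracejump{\bs p_t}=-\bs\zeta^0$, $\prodtraceext\bs p_t=0$, $\prodnormaltracejump{\bs q_t}=-\bs\zeta^1$, $\prodnormaltraceext\bs q_t=0$ for $\bs r_t=(\bs p_t,\bs q_t)$, which imply $\bs B_h\bs r_t=\bs N_h\bs\zeta$. I would set $\bs p_t,\bs q_t$ componentwise to $0$ on $\R^d\setminus\overline{\Omega_\ell}$ and, on $\Omega_\ell$, to scale-$t$ resolvent liftings: $p_{t,\ell}:=v_\ell$ with $-\Delta v_\ell+t^{-2}v_\ell=0$ in $\Omega_\ell$, $\traceint{\ell}v_\ell=-\zeta^0_\ell$, and $q_{t,\ell}:=\nabla z_\ell$ with $-\Delta z_\ell+t^{-2}z_\ell=0$ in $\Omega_\ell$, $\partial_\nu z_\ell=-\zeta^1_\ell$. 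Then $\bs r_t\in\vV$, $\nabla\cdot q_{t,\ell}=t^{-2}z_\ell$ on $\Omega_\ell$, and the four trace identities hold by construction. The standard resolvent bounds $\|v_\ell\|_{L^2(\Omega_\ell)}\lesssim t^{1/2}\|\zeta^0_\ell\|_{H^{1/2}(\partial\Omega_\ell)}$, $\|\nabla v_\ell\|_{L^2(\Omega_\ell)}\lesssim t^{-1/2}\|\zeta^0_\ell\|_{H^{1/2}(\partial\Omega_\ell)}$ for the shifted Dirichlet problem and $\|\nabla z_\ell\|_{L^2(\Omega_\ell)}+t^{-1}\|z_\ell\|_{L^2(\Omega_\ell)}\lesssim t^{\mu}\|\zeta^1_\ell\|_{H^{-1/2+\mu}(\partial\Omega_\ell)}$ for the Neumann one (proved below), together with $t^{1/2}\le t^{\mu}$, $t^{-1/2}\le t^{\mu-1}$ for $t\le1$ and $\mu\le\tfrac12$, and the identity $\bs A_\star\bs r_t=(\bs T_{c^2}\nabla\cdot\bs q_t,\bs T_\kappa\nabla\bs p_t)$, yield after summing over $\ell$ exactly the two bounds on $\bs r_t$ required above.

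The part that needs care — and that forces the restriction $\mu\le\tfrac12$ — is the Neumann estimate. With $E:=\|\nabla z_\ell\|_{L^2(\Omega_\ell)}^2+t^{-2}\|z_\ell\|_{L^2(\Omega_\ell)}^2$, testing the weak form with $z_\ell$ gives $E=-\langle\zeta^1_\ell,\traceint{\ell}z_\ell\rangle_{\partial\Omega_\ell}\le\|\zeta^1_\ell\|_{H^{-1/2+\mu}(\partial\Omega_\ell)}\|\traceint{\ell}z_\ell\|_{H^{1/2-\mu}(\partial\Omega_\ell)}$; for $\mu<\tfrac12$ the trace theorem and interpolation give $\|\traceint{\ell}z_\ell\|_{H^{1/2-\mu}}\lesssim\|z_\ell\|_{L^2(\Omega_\ell)}^{\mu}\|z_\ell\|_{H^1(\Omega_\ell)}^{1-\mu}\lesssim t^{\mu}E^{1/2}$ (since $\|z_\ell\|_{L^2(\Omega_\ell)}\le tE^{1/2}$ and $\|z_\ell\|_{H^1(\Omega_\ell)}\lesssim E^{1/2}$ for $t\le1$), whence $E\lesssim t^{2\mu}\|\zeta^1_\ell\|_{H^{-1/2+\mu}(\partial\Omega_\ell)}^2$, while at the borderline $\mu=\tfrac12$ — where $H^{1/2}(\Omega_\ell)\to L^2(\partial\Omega_\ell)$ is not bounded — one replaces this step by the multiplicative trace inequality $\|\traceint{\ell}z_\ell\|_{L^2(\partial\Omega_\ell)}^2\lesssim\|z_\ell\|_{L^2(\Omega_\ell)}\|z_\ell\|_{H^1(\Omega_\ell)}+\|z_\ell\|_{L^2(\Omega_\ell)}^2\lesssim tE$, reaching the same bound. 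The remaining ingredients (well-posedness and uniform-in-$t$ resolvent estimates for $-\Delta+t^{-2}$ on a fixed Lipschitz domain, including the unbounded $\Omega_0$ with decay at infinity) are classical, and no $\XX_h,\YY_h$-dependence ever appears because $\bs r_t$ is built from $\bs\zeta$ alone. A less self-contained alternative is to prove only the endpoint $\mu=\tfrac12$ and obtain intermediate $\mu$ by real interpolation of $\lifting$ with $\lifting\colon\mathbb B\to\hH$ from Lemma~\ref{lemma:4.1}, using $[\HH^{1/2}_\Gamma\times\HH^{-1/2}_\Gamma,\HH^{1/2}_\Gamma\times\HH^{0}_\Gamma]_{2\mu,\infty}=\HH^{1/2}_\Gamma\times\HH^{-1/2+\mu}_\Gamma$ on the data side and the reiteration identity $[\hH,[\hH,\ker\bs B_h]_{1/2,\infty}]_{2\mu,\infty}=[\hH,\ker\bs B_h]_{\mu,\infty}$ on the target side.
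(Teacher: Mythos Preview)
Your proof is correct and takes a genuinely different route from the paper's. The paper establishes only the endpoint $\mu=1/2$ and then interpolates: it builds a fixed (scale-independent) lifting $(\bs u_0,\bs w_0)$ of the data with vanishing exterior traces and shows that it lies in $\hH_{1/2}$ by invoking Besov-type embeddings $H^1\subset B^{1/2}_{2,1}\subset[L^2,H^1_0]_{1/2,\infty}$ and an $H(\mathrm{div})$-lifting result, both imported from the companion paper~\cite{semigroups}; the difference $\lifting\bs\zeta-(\bs u_0,\bs w_0)$ then lies in $\ker\bs B_h$ and is controlled in $\vV$. Intermediate $\mu$ are recovered via reiteration. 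Your argument instead estimates the $K$-functional of $\lifting\bs\zeta$ directly, using the one-parameter family of resolvent liftings $-\Delta+t^{-2}$ (Dirichlet for the scalar component, Neumann for the flux component) to produce the correction $\bs r_t$ with the right $t$-scaling. This is more self-contained---it avoids the external Besov results and the reiteration theorem, and the restriction $\mu\le\tfrac12$ emerges transparently from the multiplicative trace inequality---at the cost of carrying out the resolvent estimates explicitly. Both approaches yield constants independent of $\XX_h,\YY_h$ for the same reason: the auxiliary lifting satisfies the full trace conditions $\prodtraceext=0$, $\prodnormaltraceext=0$, so that membership in $\ker\bs B_h$ of the remainder holds for any choice of discrete spaces.
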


\begin{proof}
  For $\mu=0$, the statement follows from Lemma~\ref{lemma:4.1}. We focus on $\mu=1/2$.
  Given $\bs\zeta_0\in \HH^{1/2}_{\Gamma}$, we take $\bs u_0\in \HH$ satisfying
  \[
    -\Delta \bs u_0+\bs u_0=0, \qquad \prodtraceint \bs u_0=\bs\zeta_0,
    \qquad \prodtraceext u_0=0.
  \]
  This is a collection of $L+1$ decoupled interior-exterior Dirichlet problems in $\R^d\setminus\partial\Omega_\ell$ with vanishing exterior components in all cases.
  %By~\cite[Theorem A.1]{semigroups} and the nestedness of Besov spaces (see also the proof of \cite[Proposition 8.11]{semigroups}) we get that
  We claim that each component of $\bs u_0=(u_{0,\ell})_{\ell=0}^{L}$ satisfies
  $$u_{0,\ell} \in  \big[L^2(\R^d),H^1_0(\R^d\setminus\partial\Omega_\ell) \big]_{\frac{1}{2},\infty}.$$
This follows from the observation $u_{0,\ell} \in H^1(\R^d\setminus\partial\Omega_\ell) $, the embeddings 
$H^1(\omega) \subset B^{1/2}_{2,1}(\omega) \subset [L^2(\omega),H^1_0(\omega)]_{1/2,\infty}$ 
(for $\omega \in \{\Omega_\ell, \R^d \setminus \overline{\Omega_\ell}\}$) asserted in \cite[Thm.~{A.1}]{semigroups}. 
Given $\bs\zeta_1=(\zeta_{1,\ell})_{\ell =0}^{L}\in  \prod_{\ell}{L^2(\partial\Omega_\ell)}$,
we use \cite[Thm.~{A.4}]{semigroups} to construct on each subdomain a function
  $w_{0,\ell} \in H(\operatorname{div},\Omega_\ell)$ with
  $$ 
  \gamma^{\mathrm{int}}_{\nu,\ell} w_{0,\ell} = \zeta_{1,\ell}, \quad \text{and} \quad
  \norm{w_{0,\ell}}_{\big[L^2(\Omega_{\ell}),H_0(\operatorname{div},\Omega_{\ell})\big]_{\frac{1}{2},\infty}}\lesssim \norm{\zeta_{1,\ell}}_{L^2(\partial \Omega_\ell)}.
  $$
  Here, $H_0(\operatorname{div}, \Omega_{\ell})$ denotes the functions in $H(\operatorname{div},\Omega_{\ell})$ with vanishing interior normal trace.
  Similarly, we  write $H_0(\operatorname{div}, \R^d \setminus \partial \Omega_{\ell})$ for functions with vanishing interior and exterior normal traces.
  
  Extending these functions $w_{0,\ell}$ by zero outside $\Omega_{\ell}$ and collecting them in $\bs w_0:=(w_{0,\ell})_{\ell=0}^{L}$, we get  $\bs w_0\in \HHdiv$ satisfying
  \[
    \prodnormaltraceint \bs w_0=\bs\zeta_1,
    \qquad \prodnormaltraceext  \bs w_0=0.
  \]

 Since true zero boundary conditions are stronger than those imposed by $\ker(\bs B_h)$ it is easy to see  that
  $$
  \prod_{\ell=0}^{L}{H_0^1(\R^d \setminus \Omega)} \times \prod_{\ell=0}^{L}{H_{0}(\operatorname{div},\R^d \setminus \Omega)}
  \subseteq \ker(\bs B_h)=\dom(\AA).
  $$
  Since interpolation of product spaces corresponds to the product of interpolation spaces
  (see~\cite[Lemma A.5]{semigroups}),
  we get that 
  \begin{align*}
    (\bs u_0,\bs w_0)
    &\in
      \prod_{\ell=0}^{L} [L^2(\R^d),H^1_0(\R^d\setminus\partial\Omega_\ell)]_{\frac{1}{2},\infty}
      \times \prod_{\ell=0}^{L} \big[L^2(\R^d \setminus \partial \Omega_{\ell}),H_0(\operatorname{div},\R^d \setminus \partial \Omega_{\ell})\big]_{\frac{1}{2},\infty} \\
    &= \Big[ \mathcal{L}^2, \prod_{\ell=0}^{L}H_0^1(\R^d \setminus \Omega)\Big]_{\frac{1}{2},\infty}
      \times \Big[ \bs L^2, \prod_{\ell=0}^{L}{H_{0}(\operatorname{div},\R^d \setminus \Omega)} \Big]_{\frac{1}{2},\infty}
    \subseteq  \hH_{1/2}.
%\prod_{\ell=0}^{L} \big[L^2(\R^d \setminus \partial \Omega_{\ell}),H(\operatorname{div},\R^d \setminus \partial \Omega_{\ell})\big]_{frac{1}{2},\infty}}
  \end{align*} 

Since all these inclusions come with norm estimates, we thus have a bounded operator
  \[
    \mathbb B \ni \bs\zeta\longmapsto (\bs u_0, \bs w_0)\in \vV
  \]
  (this is not the lifting $\lifting$) such that
  \[
    \HH^{1/2}_\Gamma\times \HH^{0}_\Gamma \ni \bs\zeta
    \longmapsto (\bs u_0,\bs w_0) \in \hH_{1/2}
  \]
  is also bounded.
 Therefore,
  for $(\bs u,\bs w):=\lifting\bs \zeta$, we have
  \[
    (\bs u-\bs u_0,\bs w- \bs w_0)\in \ker \bs B_h\subseteq \hH_{1/2}.  
  \]
  Since for elements of $\ker \bs B_h$ the $\hH_{1/2}$ norm can be estimated by the $\vV$ norm, (cf.\ (\ref{eq:hHmu}))
  in which $\lifting$ is bounded, this concludes the  proof for $\mu \in \{0,1/2\}$.
    An interpolation argument and the reiteration theorem~\cite[Theorem 26.3]{tartar07},
      extends this bound to $\mu \in [0,1/2]$.
\end{proof}

To shorten some expressions, we introduce notation for the norm on the right-hand side of~\eqref{eq:regularity_estimate}.
For $m \in \N$ and $\psi \in \mathcal C^{m}([0,T],H^{1/2}(\partial \Omega_0))$, $\phi \in \mathcal C^{m-1}([0,T],H^{-1/2+\mu}(\partial \Omega_0))$,
$\mu \in [0,1/2]$, we write
\begin{align}
  \label{eq:def_regularity_norm}
  \triplenorm{ (\psi,\phi)}_{m,T,\mu}
  :=\sum_{j=0}^{m}{ \,
  \sup_{0 \leq t  \leq T} { \left(\norm{\psi^{(j)}(t)}_{H^{\frac{1}{2}}(\partial \Omega_0)}      
  +\norm{ \phi^{(j-1)}(t)}_{H^{-\frac{1}{2}+\mu}(\partial \Omega_0)}\right)}}.
\end{align}

Lemma~\ref{lemma:more_regularity} then directly gives the following corollary for the semidiscrete solution:
\begin{corollary}
  \label{exact_solution_in_hH_mu}
  For $\mu \in [0,1/2]$  and $m\in \N_0$,
  let $\gamma \uinc \in \Cpspace[H^{1/2}(\partial \Omega_0)]{m+2}$ and  
  $\partial_\nu \uinc \in \Cpspace[H^{-1/2 + \mu}(\partial \Omega_0)]{m+1}$.
  Then the solution $\bs x^h$ to~\eqref{eq:probMain} is in $\Cpspace[\hH_{\mu}]{m}$, and for $\ell \leq m$ it satisfies the bound
  \begin{align}
    \label{eq:regularity_estimate}
    \Big\|{\frac{d^\ell}{dt^\ell}\bs x^h(t)}\Big\|_{\mathbb{H}_\mu}
    &\leq C \, t\,
      \triplenorm{\big(\gamma \uinc,\partial_{\nu} \uinc\big)}_{\ell+2,T,\mu}.
  \end{align}
\end{corollary}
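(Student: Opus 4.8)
The plan is to reduce the statement to two facts that are already at our disposal: the improved mapping property of the lifting $\lifting$ from Lemma~\ref{lemma:more_regularity}, and the $\vV$-stability of the evolution from Theorem~\ref{thm:26}. The device is to split, at each fixed time, the semidiscrete solution into a boundary-lifting part and a part lying in $\ker\bs B_h$. Recall that for the scattering data of Proposition~\ref{prop:2.2} one has $\bs\xi=(\bs\xi^0,\bs\xi^1)$ with $\bs\xi^0=(\gamma\uinc,0,\dots,0)$ and $\bs\xi^1=\kappa_0\,\partial_t^{-1}(\partial_\nu\uinc,0,\dots,0)$ on $\partial\Omega_0$. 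Setting $\bs y(t):=\bs x^h(t)-\lifting\bs\xi(t)$, the defining property~\eqref{eq:def_lifting} of $\lifting$ and the boundary condition in~\eqref{eq:probMain} give $\bs B_h\bs y(t)=\bs N_h\bs\xi(t)-\bs N_h\bs\xi(t)=0$, so $\bs y(t)\in\ker\bs B_h=\dom(\AA)$; since $\bs B_h$ is bounded and commutes with time differentiation, also $\tfrac{d^\ell}{dt^\ell}\bs y(t)\in\ker\bs B_h$ for $\ell\le m$. Here we use that $\bs\xi\in\mathcal C^{m+2}([0,T];\mathbb B)$ (note that $\bs\xi^1$ gains one derivative over $\partial_\nu\uinc$ through the antidifferentiation) and that $\bs\xi^{(j)}(0)=0$ for $j\le m+1$, so that Theorem~\ref{thm:26} applies and delivers $\bs x^h$, together with its time-derivatives up to order $m$, as $\vV$-valued continuous functions; the vanishing of the initial traces is inherited from the causality hypothesis $\mathrm{supp}\,\uinc(t)\subseteq\Omega_0$ for $t\le0$, which forces $\uinc(t)$ to vanish near the compact skeleton $\Gamma$, hence all time-derivatives of $\gamma\uinc$ and $\partial_\nu\uinc$ to vanish at $t=0$.

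For the lifting part, Lemma~\ref{lemma:more_regularity} gives, for $\mu\in[0,1/2]$, $\|\tfrac{d^\ell}{dt^\ell}\lifting\bs\xi(t)\|_{\hH_\mu}=\|\lifting\bs\xi^{(\ell)}(t)\|_{\hH_\mu}\lesssim\|\bs\xi^{(\ell)}(t)\|_{\HH^{1/2}_\Gamma\times\HH^{-1/2+\mu}_\Gamma}$; by inspection the Dirichlet component of $\bs\xi^{(\ell)}$ is $(\gamma\uinc)^{(\ell)}$ and the Neumann component is $\kappa_0(\partial_\nu\uinc)^{(\ell-1)}$ (with $(\partial_\nu\uinc)^{(-1)}:=\partial_t^{-1}\partial_\nu\uinc$ when $\ell=0$), so, $\kappa_0$ being a fixed constant, the right-hand side is dominated by terms of $\triplenorm{\big(\gamma\uinc,\partial_\nu\uinc\big)}_{\ell+2,T,\mu}$, and a Taylor estimate $\|\bs\xi^{(\ell)}(t)\|\le t\sup_{0\le\tau\le t}\|\bs\xi^{(\ell+1)}(\tau)\|$ (legitimate since $\bs\xi^{(\ell)}(0)=0$) produces the factor $t$ in~\eqref{eq:regularity_estimate}. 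For the part in $\ker\bs B_h$, the real interpolation space $\hH_\mu=[\hH,\ker\bs B_h]_{\mu,\infty}$ contains $\ker\bs B_h$ with continuous embedding, so (cf.~\eqref{eq:hHmu} and the equivalence $\|\cdot\|_\vV\sim\|\AAstar\cdot\|_\hH+\|\cdot\|_\hH$ used in the proof of Theorem~\ref{thm:26}) $\|\tfrac{d^\ell}{dt^\ell}\bs y(t)\|_{\hH_\mu}\lesssim\|\tfrac{d^\ell}{dt^\ell}\bs y(t)\|_\vV\le\|\bs x^{h,(\ell)}(t)\|_\vV+\|\lifting\bs\xi^{(\ell)}(t)\|_\vV$. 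The first summand is bounded by Theorem~\ref{thm:26}, via~\eqref{eq:22.12} for $\ell=0$ and~\eqref{eq:28} for $\ell\ge1$, by $t\sum_{j=\ell}^{\ell+2}\max_{0\le\tau\le t}\|\bs\xi^{(j)}(\tau)\|_{\mathbb B}$; the embedding $\HH^{-1/2+\mu}_\Gamma\hookrightarrow\HH^{-1/2}_\Gamma$ (valid because $\mu\ge0$) bounds each $\|\bs\xi^{(j)}(\tau)\|_{\mathbb B}$ by the $j$-th term of $\triplenorm{\big(\gamma\uinc,\partial_\nu\uinc\big)}_{\ell+2,T,\mu}$. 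The second summand is handled as above, using the boundedness $\lifting:\mathbb B\to\vV$ from Lemma~\ref{lemma:4.1} and again the Taylor estimate for the factor $t$. Adding the two contributions gives~\eqref{eq:regularity_estimate}, and $\bs x^h\in\mathcal C^m([0,T];\hH_\mu)$ follows since $\lifting\bs\xi\in\mathcal C^{m+2}([0,T];\hH_\mu)$ and $\bs y\in\mathcal C^m([0,T];\ker\bs B_h)\hookrightarrow\mathcal C^m([0,T];\hH_\mu)$.

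I do not expect a genuine obstacle: the two analytic inputs — the gain of half a derivative by the lifting and the time-domain stability of the evolution — are already established, and what remains is organizational. The two points that need care are, first, that the factor $t$ in~\eqref{eq:regularity_estimate} is produced (for the lifting contribution) only once the compatibility conditions $\bs\xi^{(j)}(0)=0$ are invoked, so these must be made explicit as a consequence of the standing causality assumption on $\uinc$; and second, the matching of the antiderivative hidden in $\bs\xi^1$ with the shifted index $\phi^{(j-1)}$ (where $\phi^{(-1)}=\partial_t^{-1}\phi$) in the definition~\eqref{eq:def_regularity_norm} of $\triplenorm\cdot_{m,T,\mu}$, which is exactly what that shifted index is tailored to absorb.
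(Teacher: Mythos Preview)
Your proposal is correct and follows essentially the same route as the paper: split $\bs x^h(t)=\lifting\bs\xi(t)+\bigl(\bs x^h(t)-\lifting\bs\xi(t)\bigr)$, use Lemma~\ref{lemma:more_regularity} to put the lifting into $\hH_\mu$, and for the remainder in $\ker\bs B_h$ control its $\hH_\mu$-norm by its $\vV$-norm via Theorem~\ref{thm:26} and Lemma~\ref{lemma:4.1}. You are somewhat more explicit than the paper about the compatibility conditions $\bs\xi^{(j)}(0)=0$ (needed to produce the factor $t$) and about how the antiderivative in $\bs\xi^1$ matches the shifted index in $\triplenorm{\cdot}_{m,T,\mu}$, but these are exactly the bookkeeping points the paper leaves implicit.
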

\begin{proof}
  For $\bs \xi^0:=\left(\gamma \uinc,0,\dots, 0\right)$, 
  $\bs \xi^1:=\left(\kappa_{0} \partial_\nu  \partial_t^{-1} \uinc,0, \dots, 0\right)$
  and $\bs \zeta:=\bs N(\bs \xi^0,\bs \xi^1)$,
  Theorem~\ref{thm:26} gives that $\bs x^h:=(\bs u^h,\bs w^h) \in \Cpspace[\vV]{m}$.
  
  We write $\bs x^h= \left(\bs x^h - \lifting \bs \zeta\right) + \lifting \bs \zeta$. Due to the boundary conditions on $\bs x^h$ 
  we get that $\bs x^h(t) - \lifting\bs \zeta(t) \in \dom(\AA)=\dom(\AAstar) \cap \ker(\bs B_h) $ and we can estimate:
  \begin{align*}
    \big\|\bs x^h(t) - \lifting\bs \zeta(t)\big\|_{\hH_{\mu}}
    &\leq \big\|{\bs x^h(t) - \lifting\bs \zeta(t)}\big\|_{\hH} +  \big\|{\AA\big(\bs x^h(t) - \lifting\bs \zeta(t)\big)}\big\|_{\hH}\\
    &\lesssim \big\|{\bs x^h(t)}\big\|_{\vV} + \big\|{\bs \zeta(t)}\big\|_{\mM}.
  \end{align*}
  The term $\norm{\lifting[\bs \zeta(t)]}_{\hH_{\mu}}$ can be estimated by Lemma~\ref{lemma:more_regularity}.
  The triangle inequality and Lemma~\ref{lemma:more_regularity} give 
  \begin{align*} 
  \big\|\bs x^h(t)\big\|_{\hH_{\mu}}
  & \leq \big\|\bs x^h(t) - \lifting\bs \zeta(t)\big\|_{\hH_{\mu}}+ \big\|\lifting\bs \zeta(t)\big\|_{\hH_{\mu}}  \\
&  \lesssim \|\bs x^h(t)\|_{\vV} + \|\bs \zeta(t)\|_{\mM} + \|\bs \zeta(t)\|_{\HH^{1/2}_\Gamma\times \HH^{-1/2+\mu}_\Gamma}.
\end{align*}
  To get to the explicit estimate in terms of the data, we use Theorem~\ref{thm:26}.
  We conclude the proof for $m=0$ with the remark that we can estimate  $\|{\bs \zeta(t)}\|_{\mM}\lesssim t  \|{\dot{\bs \zeta}(t)}\|_{\mM}$
  since $\bs \zeta(0)=0$.  
  A similar argument applied to the differentiated equation gives the result for $m \in \N$.  
\end{proof}

\subsection{Convergence of the time discretization}
We are now in position to prove the main convergence result for the time discretization.

\begin{theorem}  
  \label{thm:rk_convergence}  
  Let $\bs x^{h}:=(\bs u^h,\bs w^h)$ be the solution to Problem~\eqref{eq:33.5}
  and assume $\gamma \uinc \in \mathcal C^{p+4}([0,T],H^{1/2}(\partial \Omega_0))$ and
   $\partial_{\nu} \uinc \in \mathcal C^{p+3}([0,T],H^{-1/2+\mu}(\partial \Omega_0))$ 
  for some $\mu \in [0,1/2]$.

  Assume that the Runge-Kutta method employed is A-stable and that $\rkA$ is invertible.
  Set $\alpha:=1$ if the Runge-Kutta method is strictly A-stable and $\alpha:=0$ otherwise.

  If $\bs x^{h,k}$ is the solution to~\eqref{eq:4.5_rk_form}, then the following error estimates hold for $0< t_n \leq T$:
  \begin{subequations}
  \begin{align}
    \norm{\bs x^{h}(t_n) - \fdx(t_n)}_{\hH}
    & \leq C\, T^2 k^{\min(q+\mu+1+\alpha,p)}
     \triplenorm{(\gamma \uinc,\partial_{\nu} \uinc)}_{p+4,T,\mu},  \label{eq:rk_conv_Hh}\\ 
    \norm{\bs x^{h}(t_n) - \fdx(t_n)}_{\vV}
    & \leq C  \,T^2 k^{\min(q+\mu+\alpha,p)} 
      \triplenorm{(\gamma \uinc,\partial_{\nu} \uinc)}_{p+4,T,\mu},  \label{eq:rk_conv_A}
  \end{align}
  where $p$ and $q$ denote the classical and stage order of the Runge-Kutta method employed.
  For the trace component $\bs \psi^{h,k}$, computed in~\eqref{eq:def_postprocessed_fd},
  the following estimates can be shown:
  \begin{align}
    \norm{\bs \psi^h(t_n) - \bs \psi^{h,k}(t_n)}_{\HH^{1/2}}
    & \leq C \, T^2 k^{\min(q+\mu+\alpha,p)}\,
      \triplenorm{(\gamma \uinc,\partial_\nu \uinc)}_{p+4,T,\mu}. \label{eq:rk_conv_psi}
  \end{align}
  \end{subequations}
  The constants depend on the Runge-Kutta method, $\mu$, and the geometry.
\end{theorem}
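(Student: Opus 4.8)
\textit{Strategy.} The plan is to identify the computed scheme with an abstract Runge--Kutta convolution quadrature for the $C_0$-group generated by $\AA=\AAstar|_{\ker\BB_h}$ and then to invoke the abstract convergence theory of \cite{mallo_palencia_optimal_orders_rk,semigroups}, whose structural hypotheses have been arranged to hold in Section~\ref{sect:rkcq}. By the proposition relating \eqref{eq:4.1} and \eqref{eq:4.5}, the postprocessed sequence $\fdx=\postproc\bs X^{h,k}$ produced by \eqref{eq:4.1}--\eqref{eq:4.1c} is precisely the RK-CQ approximation of the abstract initial-boundary value problem \eqref{eq:4.4} governed by $\AAstar$, $\BB_h$, $\bs N_h$, so it suffices to estimate this RK-CQ error. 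Using the splitting $\bs x^h=\bs\eta^h+\lifting(\bs N_h\bs\xi)$ with $\bs\eta^h(t)\in\ker\BB_h=\dom(\AA)$ and the defining property $\AAstar\lifting\bs\zeta=\lifting\bs\zeta$ of the lifting (Lemma~\ref{lemma:4.1}), $\bs\eta^h$ solves $\dot{\bs\eta}^h=\AA\bs\eta^h+\lifting\bigl(\bs N_h(\bs\xi-\dot{\bs\xi})\bigr)$ with vanishing initial datum; the RK-CQ discretization respects this splitting, so the analysis reduces to the standard setting of an abstract evolution equation with a $C_0$-group (in fact a group of isometries, by Lemmas~\ref{lemma:1}--\ref{lemma:3}) and a smooth source term.

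\textit{Invoking the abstract theorem.} I would then collect the ingredients required by the abstract result, all of which are now available: (i) $\AA$ generates a group of isometries on $\hH$ (Section~\ref{sect:model_problem}); (ii) the data lifting $\lifting$ is bounded $\mathbb B\to\vV$ uniformly in $\XX_h,\YY_h$ (Lemma~\ref{lemma:4.1}), and, crucially, bounded from $\HH^{1/2}_\Gamma\times\HH^{-1/2+\mu}_\Gamma$ into the interpolation space $\hH_\mu$ (Lemma~\ref{lemma:more_regularity}); (iii) under the hypotheses $\gamma\uinc\in\Cpspace[H^{1/2}(\partial\Omega_0)]{p+4}$ and $\partial_\nu\uinc\in\Cpspace[H^{-1/2+\mu}(\partial\Omega_0)]{p+3}$ the semidiscrete solution lies in $\Cpspace[\hH_\mu]{p+2}$ and obeys $\|\tfrac{d^\ell}{dt^\ell}\bs x^h(t)\|_{\hH_\mu}\le C\,t\,\triplenorm{(\gamma\uinc,\partial_\nu\uinc)}_{\ell+2,T,\mu}$ for $\ell\le p+2$ (Corollary~\ref{exact_solution_in_hH_mu}), the required vanishing of $\bs\xi$ and its time derivatives at $t=0$ following from the support condition $\support u^{\mathrm{inc}}(t)\subseteq\Omega_0$ for $t\le 0$; and (iv) the A-stability/strict A-stability of the method, encoded in $\alpha\in\{0,1\}$, together with stage order $q$ and classical order $p$. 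Feeding these into the abstract RK-CQ theorem gives the error at rate $k^{\min(q+1+\mu+\alpha,p)}$ in the $\hH$-norm and at rate $k^{\min(q+\mu+\alpha,p)}$ in the graph norm $\|\cdot\|_{\hH}+\|\AAstar\cdot\|_{\hH}\sim\|\cdot\|_{\vV}$ (the standard one-order loss for the stronger norm); the factor $T^2$ arises by combining the factor $t_n$ from the discrete stability (Gronwall) bound with the factor $t$ in the regularity estimate of Corollary~\ref{exact_solution_in_hH_mu}. Since every constant involved is independent of $\XX_h$ and $\YY_h$, so is the final one, which establishes \eqref{eq:rk_conv_Hh} and \eqref{eq:rk_conv_A}.

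\textit{The trace estimate.} For \eqref{eq:rk_conv_psi} I would start from $\bs\psi^{h,k}=\postproc$ of the first block of $\bs\Lambda^{h,k}$; by the proposition relating \eqref{eq:4.1} and \eqref{eq:4.5} and the fact that $\postproc$ commutes with the componentwise trace operators and with finite sums, this equals $\prodtracejump{\bs u^{h,k}}$ plus the postprocessed RK-CQ approximation of $\partial_t^{-1}\dot{\bs\xi}^0=\bs\xi^0$. Hence $\bs\psi^h(t_n)-\bs\psi^{h,k}(t_n)=\prodtracejump{(\bs u^h-\bs u^{h,k})}(t_n)+\bigl(\bs\xi^0(t_n)-(\text{RK-CQ of }\bs\xi^0)(t_n)\bigr)$. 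The first term is controlled by the trace theorem $\|\prodtracejump{\,\cdot}\|_{\HH^{1/2}}\lesssim\|\cdot\|_{\HH}\le\|\cdot\|_{\vV}$ applied to the $\vV$-estimate already obtained; the second term is the error of the Runge--Kutta method applied to $\dot y=\dot{\bs\xi}^0$, $y(0)=0$, which is of order $p$ in $k$ with constant $\lesssim\sup_{[0,T]}\|(\gamma\uinc)^{(p)}\|_{H^{1/2}(\partial\Omega_0)}$ and therefore no worse than the main term $k^{\min(q+\mu+\alpha,p)}$ and absorbed into $\triplenorm{(\gamma\uinc,\partial_\nu\uinc)}_{p+4,T,\mu}$.

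\textit{Main obstacle.} I expect the delicate point to be not a single computation but the bookkeeping at the interface with \cite{semigroups}: checking that the concretely computed scheme \eqref{eq:4.1}--\eqref{eq:4.1c} really is the abstract RK-CQ of \eqref{eq:4.4} under the chosen lifting, and, above all, extracting the sharp exponent $\mu$ from the abstract theorem --- this is exactly where the interpolation-scale regularity of $\lifting$ (Lemma~\ref{lemma:more_regularity}) and of $\bs x^h$ (Corollary~\ref{exact_solution_in_hH_mu}) is consumed, and it is the mechanism behind the advertised $k^{1/2}$ gain over a more straightforward analysis. The $\alpha$-improvement for strictly A-stable methods and the one-order loss in passing to the $\vV$-norm are then standard features of the RK-CQ error analysis for hyperbolic problems.
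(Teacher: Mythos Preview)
Your proposal follows the same overall line as the paper: both reduce to the abstract RK-CQ theory of \cite{semigroups,mallo_palencia_optimal_orders_rk}, with the $\hH_\mu$-regularity of $\bs x^h$ supplied by Corollary~\ref{exact_solution_in_hH_mu}. For \eqref{eq:rk_conv_Hh} the paper invokes \cite[Thm.~3.4]{semigroups} directly (the ``integrated boundary condition'' variant, since the scheme imposes $\breve{\bs B}_h\bs X^{h,k}=\breve{\bs N}_h\partial_k^{-1}\dot{\bs\Xi}^k$); your lifting-based reformulation is an equivalent route into the same abstract result.

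The one substantive difference is in \eqref{eq:rk_conv_A}. You invoke a ``standard one-order loss for the stronger norm'', but this is imprecise: taken literally it would yield $k^{\min(q+\mu+1+\alpha,p)-1}$, which is \emph{not} the same as $k^{\min(q+\mu+\alpha,p)}$ when the $\hH$-rate is already capped at $p$. The paper instead observes that $\bs Y^{h,k}:=\partial_k\bs X^{h,k}=\breve{\AAstar}\bs X^{h,k}$ solves an RK scheme of its own, now with the \emph{non-integrated} boundary data $\breve{\bs N}_h\dot{\bs\xi}(t_n+k\rkc)$, and applies \cite[Thm.~1 or 2]{mallo_palencia_optimal_orders_rk} to that problem; this yields $\|\AAstar(\bs x^h-\bs x^{h,k})\|_{\hH}\lesssim k^{\min(q+\mu+\alpha,p)}$ directly and hence the $\vV$-bound. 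This differentiation step is the actual mechanism and should replace your one-order-loss hand-wave.

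For \eqref{eq:rk_conv_psi} the paper simply applies the trace theorem to the $\vV$-estimate. Your more careful decomposition, isolating the $O(k^p)$ RK quadrature error in reconstructing $\bs\xi^0$ from $\partial_k^{-1}\dot{\bs\Xi}^{0,k}$, is correct and makes explicit a term the paper silently absorbs into the dominant contribution.
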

\begin{proof}
  \def\fdY{{\bs Y}^{h,k}}
  \def\fdy{\bs y^{h,k}}
  We apply the theory developed in \cite{semigroups}.
  Since we are in the situation of an integrated boundary condition, we apply \cite[Thm.~{3.4}]{semigroups} to get \eqref{eq:rk_conv_Hh},
  using the regularity estimate~\eqref{eq:regularity_estimate}.

  By looking at the $Z$-transforms, it is easy to see that  $\fdY:=\dd \fdX=\AAstar \fdX$ solves the following problem
  \begin{subequations}
    \label{eq:differentiated_eq}
    \begin{align}
      \fdY(t_n) &= \fdy(t_n)\ones + k [\rkA \otimes \AA_{\star}] \fdY(t_n), \\
      \prodOp{\BB_{h}}\fdY(t_n) &= \left(\dd \big(\dd\big)^{-1} \dot{\bs{\Xi}}(t_n), 0, 0\right) = \bs N_{h}\dot{\bs{\Xi}}(t_n)=\bs N_{h}\dot{\bs{\xi}}(t_n+k \rkc), \\
      \fdy(t_{n+1})&=R(\infty)\fdy(t_n) + \rkb^{T}\rkA^{-1} \fdY(t_n),
    \end{align}
  \end{subequations}
  while $\bs y^h:=\AAstar \bs x^h$ solves $\dot{\bs y}^h=\AAstar \bs y^h$ and $\bs \BB_{h} \bs y^h= \bs N_{h} \dot{\bs{\xi}}$. 
  This means we can apply~\cite[Thm.~1 or 2]{mallo_palencia_optimal_orders_rk} to get the 
  following error estimate:
  \begin{align*}
    \norm{\AAstar\big(\bs x^{h}(t_n) - \fdx(t_n)\big)}_{\hH}
    & \lesssim   T k^{\min(q+\mu+\alpha,p)}
      \sum_{j=0}^{p+2}{\sup_{0\leq t \leq T}{\norm{\bs x^h(t)}_{\hH_{\mu}}}}\\
    &\lesssim T^2 k^{\min(q+\mu+\alpha,p)} \triplenorm{(\gamma \uinc, \partial_\nu \uinc)}_{p+4,T,\mu}.
  \end{align*}
  Together with the $\hH$-estimate for $\bs x^{h}(t_n) - \fdx(t_n)$ we can estimate the $\vV$-norm. The trace theorem then immediately gives~\eqref{eq:rk_conv_psi}.  
\end{proof}

\begin{theorem}
  \label{thm:rk_convergence_phi}
  Consider the same setting as in Theorem~\ref{thm:rk_convergence}
  and further assume that  $\gamma \uinc \in \mathcal C^{p+5}([0,T],H^{1/2}(\partial \Omega_0))$ and
   $\partial_{\nu} \uinc \in \mathcal C^{p+4}([0,T],H^{-1/2+\mu}(\partial \Omega_0))$ for some $\mu \in [0,1/2]$.
  
  If, in addition, the Runge-Kutta method is also stiffly accurate, then we can estimate $\bs \phi^h$ as defined in~\eqref{eq:def_postprocessed_fd} by:
  \begin{align}
    \norm{\bs\phi^h(t_n) - \bs \phi^{h,k}(t_n)}_{\HH^{-1/2}}
    & \leq C T^2 k^{r_{\bs \phi}}\triplenorm{(\gamma \uinc,\partial_\nu \uinc)}_{p+5,T,\mu},  \label{eq:rk_conv_lambda} 
  \end{align}
  where the rate $r_{\bs \phi}$ is given by
  \begin{align*}
    r_{\bs \phi}:=\begin{cases}
      q+ \mu +\alpha - 1/2 & \text{for } q+ \alpha < p \\
      q+ \alpha + \frac{\mu-1}{2} & \text{for } q+ \alpha = p \\
      p + \frac{\alpha -1}{2} & \text{for } q+ \alpha >  p.
      \end{cases}
  \end{align*}
\end{theorem}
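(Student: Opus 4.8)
The plan is to reduce the $\HH^{-1/2}_\Gamma$-estimate for $\bs\phi^h$ to a bound on the normal trace jump of the velocity error $\dot{\bs w}^h-\dot{\bs w}^{h,k}$, and then to extract that from the RK-CQ convergence theory applied to twice-differentiated versions of the evolution problem. Combining the definition of $\bs\phi^h$ from Theorem~\ref{thm:32}, the postprocessing~\eqref{eq:def_postprocessed_fd}, and the Proposition relating~\eqref{eq:4.1} with~\eqref{eq:4.5}, after the sampled Neumann data $\dot{\bs\xi}^1$ cancel on both sides one has
\[
\bs\phi^h(t_n)-\bs\phi^{h,k}(t_n)=\mathrm Q_\kappa\,\prodnormaltracejump{\bigl(\dot{\bs w}^h-\dot{\bs w}^{h,k}\bigr)(t_n)},
\]
where $\dot{\bs w}^{h,k}$ is the velocity component of the postprocessed stage vector ${\bs Y}^{h,k}:=\breve{\AAstar}{\bs X}^{h,k}$ of~\eqref{eq:differentiated_eq}. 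Here the stiff-accuracy hypothesis enters twice: it lets $\postproc$ select the last stage, so that $\postproc$ commutes cleanly with $\breve{\AAstar}$ and with $\partial_k$, and it guarantees $r(\infty)=0$, which is what makes an RK-CQ estimate available at the ``second graph level'' $\|\AAstar^2\cdot\|$ that appears below.

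Writing $\bs e:=\bs x^h-\bs x^{h,k}$ and using $\dot{\bs w}^h=(\AAstar\bs x^h)_2=\bs T_\kappa\nabla\bs u^h$, so that on each subdomain $\nabla\cdot\dot{\bs w}^h=\bs T_{c^2}^{-1}\ddot{\bs u}^h=\bs T_{c^2}^{-1}(\AAstar^2\bs x^h)_1$, I would test the normal trace jump against a lifting of $\bs\eta\in\HH^{1/2}_\Gamma$ and integrate by parts on each $\Omega_\ell$. Choosing the lifting so that it lands in the interpolation space $\hH_{1/2}$ — possible by the construction used in the proof of Lemma~\ref{lemma:more_regularity} — one arrives at
\[
\|\bs\phi^h(t_n)-\bs\phi^{h,k}(t_n)\|_{\HH^{-1/2}}\;\lesssim\;\|\AAstar\bs e(t_n)\|_{\hH}+\|\AAstar^2\bs e(t_n)\|_{(\hH_{1/2})'},
\]
where the second term lives in a norm strictly weaker than $\|\cdot\|_{\hH}$; this is the mechanism for the $\tfrac12$-order gain. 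The first term was already estimated inside the proof of Theorem~\ref{thm:rk_convergence}, where~\cite[Thm.~1 or~2]{mallo_palencia_optimal_orders_rk} applied to~\eqref{eq:differentiated_eq} gives rate $\min(q+\mu+\alpha,p)\ge r_{\bs\phi}$.

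For the second term I would use that $\bs z^h:=\AAstar^2\bs x^h=\ddot{\bs x}^h$ solves the twice-differentiated problem $\dot{\bs z}^h=\AAstar\bs z^h$, $\bs B_h\bs z^h=\bs N_h\ddot{\bs\xi}$, whose RK-CQ discretization is obtained by differentiating~\eqref{eq:differentiated_eq} once more, and that $\bs z^h\in\Cpspace[\hH_\mu]{p+2}$ by Corollary~\ref{exact_solution_in_hH_mu}; this is precisely what forces the extra regularity $\gamma\uinc\in\mathcal C^{p+5}$, $\partial_\nu\uinc\in\mathcal C^{p+4}$ and the extra vanishing initial conditions. The abstract RK-CQ theory of~\cite{semigroups,mallo_palencia_optimal_orders_rk} applied to this problem yields an $\hH$-estimate at the reduced rate of the second graph level; moving one factor of $\AAstar$ onto the $\hH_{1/2}$-lifting via the skew-adjointness $\AAstar^\ast=-\AAstar$ of Lemma~\ref{lemma:3}, and interpolating with the $\vV$-estimate of Theorem~\ref{thm:rk_convergence}, should produce $r_{\bs\phi}$: in the generic regime $q+\alpha<p$ one expects the \emph{average} $\tfrac12\bigl((q+\mu+\alpha)+(q+\mu+\alpha-1)\bigr)=q+\mu+\alpha-\tfrac12$ of the $\vV$-rate and the second-graph-level rate, and the three-case form of $r_{\bs\phi}$ then reflects how these two rates are capped by $p$ and how they depend on $r(\infty)$. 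Finally the trace theorem transfers everything to the claimed norm, and the triple-norm bookkeeping is inherited from Corollary~\ref{exact_solution_in_hH_mu} and Theorem~\ref{thm:rk_convergence}.

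The main obstacle is exactly this last step: one must propagate the RK-CQ order reduction through \emph{two} differentiations of the inhomogeneous boundary condition and through the duality pairing, and verify that the interpolation between the $\hH$- and $(\hH_{1/2})'$-estimates is admissible with only the regularity furnished by Corollary~\ref{exact_solution_in_hH_mu} and the lifting bound of Lemma~\ref{lemma:more_regularity}. The $\alpha$-dependence of the third regime — where a genuine half order is actually \emph{lost} for methods that are stiffly accurate but not strictly A-stable — shows that the weaker norm does not unconditionally buy back $\tfrac12$, so the bookkeeping cannot be purely formal and requires the quantitative form of the RK-CQ estimates in~\cite{semigroups}, in particular their dependence on $r(\infty)$ and on the location of the solution in the scale $\hH_\mu$.
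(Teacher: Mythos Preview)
Your starting point is correct: the error $\bs\phi^h-\bs\phi^{h,k}$ is, up to $\mathrm Q_\kappa$, the normal trace jump of $e_\ell:=z^h_\ell-z^{h,k}_\ell$ (second component of $\bs y^h-\bs y^{h,k}$), and integration by parts against a lifting $v\in H^1(\R^d)$ of $\eta$ gives
\[
\langle\phi^h_\ell-\phi^{h,k}_\ell,\eta\rangle_{\partial\Omega_\ell}
=(e_\ell,\nabla v)_{L^2(\R^d\setminus\partial\Omega_\ell)}+(\nabla\cdot e_\ell,v)_{L^2(\R^d\setminus\partial\Omega_\ell)}.
\]
From here, however, the paper does \emph{not} pass through the dual norm $(\hH_{1/2})'$, a second differentiation of the evolution, or any skew-adjointness transfer. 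Two concrete ingredients replace all of that. First, \cite[Thm.~{3.5}]{semigroups}, applied to the once-differentiated problem~\eqref{eq:differentiated_eq} (this is where stiff accuracy enters), yields an $\hH$-estimate for $\nabla\cdot e_\ell$ at the explicit rate $r_1:=\min(q+\mu,p)+\alpha-1$, to be paired with the rate $r_0:=\min(q+\mu+\alpha,p)$ for $\|e_\ell\|$ from the proof of Theorem~\ref{thm:rk_convergence}. Second, the lifting $v$ is chosen \emph{$k$-dependently}: \cite[Prop.~{2.5.1}]{sayas_book} provides, for any $\beta>0$, a lifting with $k^{-\beta}\|v\|+\|\nabla v\|\lesssim k^{-\beta/2}\|\eta\|_{H^{1/2}}$. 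A weighted Cauchy--Schwarz on the two terms above then gives the rate $r_0+(r_1-r_0)/2=(r_0+r_1)/2$ directly, and the three cases for $r_{\bs\phi}$ fall out by checking where the minima in $r_0,r_1$ are attained.

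Your route via $\|\AAstar^2\bs e\|_{(\hH_{1/2})'}$ is not clearly wrong in spirit, but it is more delicate than necessary and --- as you acknowledge --- the decisive interpolation step is not carried out. More concretely, the displayed bound $\|\bs\phi^h-\bs\phi^{h,k}\|_{\HH^{-1/2}}\lesssim\|\AAstar\bs e\|_{\hH}+\|\AAstar^2\bs e\|_{(\hH_{1/2})'}$ does not follow from Lemma~\ref{lemma:more_regularity} in the way you suggest: that lemma lifts boundary data into $\hH_{1/2}$, but the term $(e_\ell,\nabla v)$ requires control of $\|\nabla v\|$, a full graph-norm quantity, not an $\hH_{1/2}$-quantity. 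Nor is $\AAstar^\ast=-\AAstar$ true (only $\AA$ is skew-adjoint, on $\ker\bs B_h$), so moving a factor of $\AAstar$ onto the lifting is not automatic. The paper's $k$-dependent lifting is precisely the device that balances the two terms without any of this apparatus.
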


\begin{proof}
  Reusing the notation from Theorem~\ref{thm:rk_convergence}, write $\bs y^{h}=:\big(\bs v^{h},\bs z^{h}\big)$.
  In order to estimate $\bs \phi^h - \bs \phi^{h,k}$
  we need to control $\fdiv{\bs z^{h}} - \fdiv{\bs z^{h,k}}$.  
  This can be estimated by using \cite[Thm.~{3.5}]{semigroups}.
  Together with the regularity estimate~\eqref{eq:regularity_estimate} we get the rate
  \begin{align*}
    \norm{\fdiv{\bs z^{h}} - \fdiv\bs z^{h,k}}_{\hH}
    &\lesssim T^2 k^{\min(q+\mu,p)+\alpha -1} \triplenorm{(\gamma \uinc,\partial_\nu \uinc)}_{p+5,T,\mu}.      
  \end{align*}
  Define $r_0:=\min(q+\mu+\alpha,p)$ and $r_1:=\min(q+\mu,p)+\alpha -1$ for the
  convergence rates of $\norm{\bs z^{h} - \bs z^{h,k}}_{\hH}$ and $\norm{\fdiv \bs z^{h} - \fdiv \bs z^{h,k}}_{\hH}$, respectively.

  Setting $e_\ell:=z^{h}_\ell - z^{h,k}_\ell$ for the $\ell$-th component of the error,
  we can calculate for
  $\eta \in H^{1/2}(\partial \Omega_{\ell})$ and $v\in H^1(\R^d)$ with
  $\traceint{\ell} v =\traceext{\ell} v=\eta$:
  \begin{align*}    
    \dualproduct[\partial \Omega_{\ell}]{\phi^h_{\ell} - \phi^{h,k}}{\eta}     
    &= \big( e_{\ell} ,\nabla v \big)_{L^2(\R^d \setminus \partial \Omega_\ell)}
      + \big(\fdiv{e}_{\ell},v \big)_{L^2(\R^d \setminus \partial \Omega_{\ell})} \\    
    &\lesssim \begin{multlined}[t][11cm]
    \big(\!\norm{e_{\ell}}_{L^2(\R^d \setminus \partial \Omega_\ell)}\! +\! k^{r_0-r_1}
    \!\norm{ \fdiv e_{\ell}}_{L^2(\R^d \setminus \partial \Omega_\ell)}\!\big)  \\
    \times \big(\!\norm{\nabla v}_{L^2(\R^d \setminus \partial \Omega_\ell)}\! +\! k^{r_1-r_0}\!\norm{ v}_{L^2(\R^d \setminus \partial \Omega_\ell)}\!\big)
  \end{multlined}
    \\            
    &= \left( \bigO(k^{r_0}) + \bigO(k^{r_0-r_1} k^{r_1})\right)
    \big(\norm{\nabla v}_{L^2(\R^d \setminus \partial \Omega_\ell)} + k^{r_1-r_0}\norm{ v}_{L^2(\R^d \setminus \partial \Omega_\ell)}\big).
  \end{align*}
  
      We are still free to pick the precise lifting $v$. 
      By \cite[Prop.~{2.5.1}]{sayas_book}, we have for arbitrary $\beta> 0$ 
      \[
      \inf\{ k^{-\beta}\|v\|_{\R^d}+\|\nabla v\|_{L^2(\R^d)}\,:\, v\in H^1(\R^d),\,\traceint{\ell}{v}=\eta\}
      \le C_\beta\max\{1,k^{-\beta/2}\}\| \eta\|_{H^{1/2}(\partial_{\Omega_\ell})}.
      \]
      We thus get the convergence rate:
      $$
      \norm{\bs \phi^h(t_n) - \bs \phi^{h,k}(t_n)}_{\HH^{-1/2}} \lesssim C(\uinc) T^2 k^{r_0+ (r_1-r_0)/2}.
      $$
      The full statement then follows by explicitly checking the different cases and working out the
      dependencies of $C(\uinc)$.
      \end{proof}

  Finally, we have the following convergence result away from the boundary:
  \begin{theorem}
    \label{thm:local_and_pointwise_convergence}
    Consider the setting of Theorem~\ref{thm:rk_convergence}.
    In addition, assume that $w=0$ is the only point in the right-half plane where
    the stabilty function of the RK-method satisfies $r(w)=1$ .
    Let the incident wave satisfy $\gamma \uinc \in \Cpspace[H^{1/2}(\partial \Omega_0)]{2p+5-q}$ as well as  
    $\partial_{\nu} \uinc \in \Cpspace[H^{-1/2}(\partial \Omega_0)]{2p+4-q}$.
    %
    %Let $\bs \lambda:=(\bs \phi,\bs \psi)$ be the exact solution of~\eqref{eq:39},
    %and $\bs x:=(\bs u,\bs w)$ be the corresponding exact solution to~\eqref{eq:probMain}.
    %Let $\bs \lambda^{h,k}:=(\fdl,\fdp)$ denote the solutions to~\eqref{eq:4.1},
    %and $\fdx=(\fdu,\fdw)$ the post-processing using the representation formula
  %\eqref{eq:4.1c} and~\eqref{eq:def_postprocessed_fd}.

  Let $\widetilde{\Omega}\subseteq \R^d$ be an open subset of $\R^d$ such that
  $\overline{\widetilde{\Omega}} \cap \Gamma=\emptyset$, and $\varphi_{\widetilde{\Omega}}$
  a smooth cutoff function with $\varphi_{\widetilde{\Omega}} \equiv 1$ on ${\widetilde{\Omega}}$
  and $\varphi_{\widetilde{\Omega}}=0$ in  a neighborhood of $\Gamma$. 
  Then the following estimates hold for $t_n=n k$ with $t_n \leq T$:  
  \begin{multline*}
    \big\|{\varphi_{\widetilde{\Omega}}(\dot{\bs u}^{h}(t_n) - \partial_k \fdu(t_n))}\big\|_{\hH}
    +
    \big\|{\varphi_{\widetilde{\Omega}}(\nabla\bs u^{h}(t_n) - \nabla \fdu(t_n))}\big\|_{\hH} \\
    \lesssim C(\widetilde{\Omega})    
     (1+T^{p-q+2})  k^{p}   \triplenorm{(\gamma \uinc,\partial_{\nu} \uinc)}_{2p+3-q,T,\mu}
  \end{multline*}
  with a constant $C(\widetilde{\Omega})$ which depends on $\widetilde \Omega$ in
  addition to the usual dependencies.
  
  Let $d\leq 3$. Then, we can derive the following pointwise bound for $x \in \R^d \setminus \Gamma$.
      \begin{multline*}
    \big|{\dot{\bs u}^{h}(x,t_n) - \partial_k \fdu(x,t_n)}\big|
    +
    \big|{\nabla\bs u^{h}(x,t_n) - \nabla \fdu(x,t_n)}\big\| \\    
    \lesssim
     C(x)
     (1+T^{p-q+2})  k^{p}   \triplenorm{(\gamma \uinc,\partial_{\nu} \uinc)}_{2p+4-q,T,\mu}.
  \end{multline*}
\end{theorem}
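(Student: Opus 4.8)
The plan is to localize the problem to a neighbourhood of $\widetilde{\Omega}$ that stays away from the skeleton $\Gamma$. There the material parameters $c,\kappa$ are constant and neither initial data nor a volume source is present, so interior regularity for the constant-coefficient wave system will turn the order-limited \emph{time}-regularity of $\bs x^{h}$ from Corollary~\ref{exact_solution_in_hH_mu} into arbitrary \emph{spatial} smoothness; feeding this smoothness into the abstract Runge--Kutta theory will then remove the order reduction and leave the classical rate $k^{p}$. Concretely, I would first fix a finite chain of smooth cut-offs $\varphi_{\widetilde{\Omega}}=\varphi_{0}\prec\varphi_{1}\prec\dots\prec\varphi_{M}$ in which each $\varphi_{j}\equiv1$ on a neighbourhood of $\operatorname{supp}\varphi_{j-1}$, every $\varphi_{j}$ vanishes near $\Gamma$, and $M$ is a fixed integer of size $\sim p-q$. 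Multiplication by $\varphi_{j}$ acts in the interior and hence commutes with the convolution-quadrature operator $\dd$, with the postprocessing $\postproc$, and with the Galerkin trace conditions; in particular $\varphi_{j}\bs x^{h}(t)$ and $\varphi_{j}\bs x^{h,k}(t)$ have support disjoint from $\Gamma$ and lie in $\dom(\AA)=\ker\BB_{h}$. The commutator $[\varphi_{j},\AAstar](\bs u,\bs w)=\bigl(-\bs T_{c^{2}}(\nabla\varphi_{j})\!\cdot\!\bs w,\,-\bs T_{\kappa}(\nabla\varphi_{j})\bs u\bigr)$ is a \emph{bounded} operator on $\hH$ supported in $\operatorname{supp}\nabla\varphi_{j}$. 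Using the evolution form \eqref{eq:4.4}--\eqref{eq:4.5} (recall $\bs x^{h,k}=\postproc\bs X^{h,k}$ with $\bs X^{h,k}$ solving \eqref{eq:4.5}), the function $\varphi_{j}\bs x^{h}$ solves the abstract inhomogeneous equation $\partial_{t}(\varphi_{j}\bs x^{h})=\AA(\varphi_{j}\bs x^{h})+[\varphi_{j},\AAstar]\bs x^{h}$ with vanishing Cauchy data, while $\varphi_{j}\bs x^{h,k}$ is the Runge--Kutta convolution quadrature of the \emph{same} equation, the only difference being that at stage level the source is $[\varphi_{j},\AAstar]$ applied to the internal stage vector $\bs X^{h,k}$ rather than to $\bs x^{h}$ sampled at the stages.

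Next I would carry out the interior-regularity bootstrap: on $\operatorname{supp}\varphi_{M}$, repeatedly using $\laplace\bs u^{h}=\bs T_{c^{2}}^{-1}\ddot{\bs u}^{h}$ (and the analogous identity for $\bs w^{h}$) together with standard interior elliptic estimates upgrades the bounds on $\tfrac{d^{\ell}}{dt^{\ell}}\bs x^{h}$ from Corollary~\ref{exact_solution_in_hH_mu} to bounds on arbitrarily many spatial derivatives of $\bs x^{h}$ on each $\operatorname{supp}\nabla\varphi_{j}$. Consequently $[\varphi_{j},\AAstar]\bs x^{h}\in\Cpspace[\dom(\AA^{N})]{m}$ for any prescribed $N,m\in\N$, with norms controlled by $\triplenorm{(\gamma\uinc,\partial_{\nu}\uinc)}_{\,\cdot\,,T,\mu}$. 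This is exactly where the strengthened hypotheses $\gamma\uinc\in\Cpspace[H^{1/2}(\partial\Omega_{0})]{2p+5-q}$ and $\partial_{\nu}\uinc\in\Cpspace[H^{-1/2}(\partial\Omega_{0})]{2p+4-q}$ are consumed: the $M\sim p-q$ localization levels and the passage from time- to space-regularity each cost a fixed number of derivatives of $\uinc$.

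The error bound itself I would obtain by descending recursion on $j=M,M-1,\dots,0$. For fixed $j$, split $\varphi_{j}(\bs x^{h}-\bs x^{h,k})$ into (i)~the Runge--Kutta convolution-quadrature error of the homogeneous-boundary, \emph{smooth}-source problem $\partial_{t}\zeta=\AA\zeta+[\varphi_{j},\AAstar]\bs x^{h}$, and (ii)~the effect of replacing the sampled exact source by the discrete stage source. Since the source of (i) lies in $\Cpspace[\dom(\AA^{N})]{m}$ for $N$ large, the abstract Runge--Kutta theory of \cite{mallo_palencia_optimal_orders_rk} and its semigroup extension \cite{semigroups}, applied to the isometry group generated by $\AA$, yield that term~(i) converges at the full classical order $k^{p}$ without reduction. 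Term~(ii) is, by discrete stability of the scheme, controlled by $T$ times the global Runge--Kutta stage error of \eqref{eq:4.4} localized to $\operatorname{supp}\nabla\varphi_{j}\subseteq\{\varphi_{j+1}\equiv1\}$; as $\varphi_{j+1}$ is the identity there, this is governed by the level-$(j+1)$ object, whose defect in turn involves only high time-derivatives of the interior-smooth field $[\varphi_{j+1},\AAstar]\bs x^{h}$ and hence, by the same split, is a fresh $\bigO(k^{p})$ contribution plus the level-$(j+1)$ error. After $M$ descents the residual is covered by the global estimate \eqref{eq:rk_conv_A}, and since $M$ exceeds the order gap $p-\min(q+\mu+\alpha,p)$ the accumulated rate is $k^{p}$, the $M\sim p-q$ nested stability bounds producing the factor $1+T^{p-q+2}$. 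Applying this to both components, and using the identities $\partial_{k}\bs u^{h,k}=\bs T_{c^{2}}\breve\nabla\!\cdot\!\bs W^{h,k}$ etc., gives the asserted $\hH$-estimate for $\varphi_{\widetilde{\Omega}}(\dot{\bs u}^{h}-\partial_{k}\bs u^{h,k})$ and $\varphi_{\widetilde{\Omega}}(\nabla\bs u^{h}-\nabla\bs u^{h,k})$. For the pointwise statement ($d\le3$) I would then run the local estimate with one further time derivative of the data and use $\laplace=\bs T_{c^{2}}^{-1}\partial_{t}^{2}$ once more on a small ball $\widetilde{\Omega}\ni x$ with $\overline{\widetilde{\Omega}}\cap\Gamma=\emptyset$ to obtain a local $H^{2}$-estimate; since $H^{2}(\widetilde{\Omega})\hookrightarrow C(\overline{\widetilde{\Omega}})$ for $d\le3$, this closes the argument, with a constant depending on $x$ through $\operatorname{dist}(x,\Gamma)$.

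The step I expect to be the main obstacle is the recursion: justifying that localizing the order-reduced global stage error away from $\Gamma$ genuinely recovers the missing orders. The mechanism I have in mind is that the localized residual at each level equals $[\varphi_{j},\AAstar]$ applied to a Runge--Kutta defect, and commuting $[\varphi_{j},\AAstar]$ past the time derivatives turns this into a high-order time derivative of the interior-smooth field $[\varphi_{j},\AAstar]\bs x^{h}$, which may therefore be re-expanded at the next level; making this quantitative, tracking the dependence on $T$ and on the number of derivatives of $\uinc$ consumed, and verifying that $\sim p-q$ levels are enough, is the technical heart of the argument.
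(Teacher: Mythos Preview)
Your high-level strategy---localize by a cutoff, pass to an inhomogeneous problem with a smooth commutator source, and iterate to recover the missing orders---is morally the right one and is what the paper does in spirit, but the concrete mechanism you propose differs from the paper's and has a gap at precisely the point you flag as the main obstacle.

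The gap is the stage/step mismatch in your recursion. At level $j$ your ``term (ii)'' requires controlling $[\varphi_j,\AAstar]$ applied to the \emph{stage} error $\bs x^h(t_n+k\rkc)-\bs X^{h,k}_n$, whereas your induction hypothesis at level $j+1$ speaks about the \emph{step} error $\varphi_{j+1}\bigl(\bs x^h(t_n)-\bs x^{h,k}_n\bigr)$. Stage errors in general carry only the stage order, and there is no direct way to close the loop without either proving a separate stage-level version of the bootstrap or rewriting the whole recursion at the stage level; neither is done. The mechanism you suggest at the end (commuting $[\varphi_j,\AAstar]$ past time derivatives and re-expanding) does not address this: the obstruction is algebraic, about which quantity the recurrence actually delivers, not about time regularity of $\bs x^h$.

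The paper sidesteps the issue by working with a \emph{single} cutoff $\opT$ (multiplication by $\varphi_{\widetilde\Omega}$) and developing an abstract commutator calculus directly on the Runge--Kutta error recursion, rather than splitting into ``smooth-source problem plus source perturbation''. The key objects are the iterated commutators $\opC_0:=\opT$, $\opC_{\ell+1}:=\AA^{-1}[\opC_\ell,\AAstar]$; since $\AAstar$ here is first order, each $[\opC_\ell,\AAstar]$ is a zeroth-order multiplication operator supported near $\operatorname{supp}\nabla\varphi_{\widetilde\Omega}$, so $\AA^m\opC_m$ is bounded for every $m$. One derives, entirely at the level of the postprocessed error $e_k$, a recurrence of the form $\opT e_k(t+k)=r(k\AA)\,\opT e_k(t)+\sum_j q_j(k\AA)\,\opC_j e_k(t)+(\text{consistency terms of order }k^{p+1})$ and then bounds all $\|\opC_\ell e_k^{(\nu)}\|_\hH$ by the \emph{unlocalized} errors $\|e_k^{(\nu')}\|_\hH$ via reverse induction on $\ell$. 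The order gain comes not from nesting cutoffs but from the algebraic identity $\opT=\sum_{\ell=0}^M\binom{M}{\ell}\AA^M\opC_\ell\AA^{-M}$ applied to the $M$-times integrated evolution $\AA^{-M}\bs x^h$, for which standard Runge--Kutta theory already yields order $q+M$; taking $M=p-q$ gives the classical rate. For the pointwise bound one repeats the argument with $\opT$ replaced by $\AA^j\varphi_{\widetilde\Omega}$ for $j=1,2$ and invokes $H^2\hookrightarrow C^0$ for $d\le 3$. No nested family $\varphi_0\prec\dots\prec\varphi_M$ is needed, and the stage/step dichotomy never arises because the recursion is written directly for the postprocessed error.
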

\begin{proof}
  Follows from Theorem~\ref{thm:local_convergence}. We point out that
  our quantities of interest are the components of $\bs Y^{h,k}$ as
  defined in~\eqref{eq:differentiated_eq}.
  In order to fulfill Definition~\ref{def:cutoff_operators},
  the cutoff operator $\opT$ is given by multiplication
  with the cutoff function $\varphi_{\widetilde{\Omega}}$.
  To see that $\AA^{-m} \opC_{m}$ is bounded, we will use
  Lemma~\ref{lemma:iterated_commutators_2}, and the easier to investigate commutators $\widetilde{\opC}_m$.
  By the product rule it is easy to see
  that $\opT$ is a bounded operator on $\hH$ and analogously for all its iterated commutators
  $\widetilde{\opC}_j$. For example, for $j=1$ we have
  \begin{align*}
    \AAstar \opT \colvec{\bs v \\\bs w} -   \opT \AAstar\colvec{\bs v \\\bs w}
    &=\colvec{\nabla \varphi_{\widetilde{\Omega}} \nabla \cdot \bs w \\
    \nabla \varphi_{\widetilde{\Omega}} \cdot \nabla v}
  \end{align*}
  and so on.
  Thus, we have that for any
  $u \in \dom(\AA)$ it holds that $\widetilde{\opC}_j u \in \dom(\AA)$. Applying
  \eqref{eq:iterated_commutators2}
  we get
  $$
  \AA^{m} \opC_{m} u =\AA^{m} \AA^{-m} \widetilde{\opC}_m u.
  $$
  Since the right-hand side is a bounded operator and $\dom(\AA)$ is dense, we get that $L(\opT,m)=m$.
  
  By the support
  properties of $\varphi_{\widetilde{\Omega}}$ we get that $\opT$ is an admissible cutoff operator
  of arbitrary order (as specified in Definition~\ref{def:cutoff_operators}),
  thus we may use $M=p-q$ to regain full order for the $\hH$-estimate.

  To see the pointwise estimate, we use $\opT_j:=\AA^{j} \varphi_{\widetilde{\Omega}}$ for $j=1,2$.
  This operator has $L(\opT_j,m)=j+m$.
  This gives convergence for $\opT_j(\nabla u^h- \nabla u^{h,k})$ of full classical order. The stated result
  then follows from the standard Sobolev embedding $H^2(\widetilde{\Omega})\subseteq \mathcal{C}^0(\widetilde{\Omega})$
  for $d\leq 3$ (see, e.g., \cite[Sect. 5, Thm. 6]{evans_pde}).  
\end{proof}

  \subsection{Convergence of the fully discrete scheme}
\label{sect:fully_discrete}
In this section, we collect the previous convergence results for the space and time discretization to give
explicit convergence rates for the fully discrete systems.
In order to quantify the convergence rates of the full discretization, we make the following assumption on the spaces $\XX_h$ and $\YY_h$
(see Section~\ref{sect:construction} on how to construct spaces satisfying these assumptions).
  \begin{theorem}
    \label{thm:full_convergence_quasioptimal}
    Let the incident wave satisfy $\gamma \uinc \in \Cpspace[H^{1/2}(\partial \Omega_0)]{p+5}$ as well as  
    $\partial_{\nu} \uinc \in \Cpspace[H^{-1/2+\mu}(\partial \Omega_0)]{p+4}$
    for some $\mu \in [0,1/2]$. 
  
  Let $p$ denote the classical order of the Runge-Kutta method and $q$ its stage order.
  Assume that the method is A-stable and $\rkA$ is invertible.  Set $\alpha:=1$ if the method is strictly A-stable
  (i.e., $\abs{r(z)}<1$ for  $0\neq z \in \ii \R$ and $r(\infty)\neq 1$),
  and set $\alpha:=0$ otherwise. 
  Let $\bs \lambda:=(\bs \phi,\bs \psi)$ be the exact solution of~\eqref{eq:39}, and $\bs x:=(\bs u,\bs w)$ be the corresponding exact solution to~\eqref{eq:probMain}.
  Let $\bs \lambda^{h,k}:=(\fdl,\fdp)$ denote the solutions to~\eqref{eq:4.1},  and $\fdx=(\fdu,\fdw)$ the post-processing using the representation formula
  \eqref{eq:4.1c} and~\eqref{eq:def_postprocessed_fd}.
  
  Then the following estimates hold for $t_n=n k$ with $t_n \leq T$:  
  \begin{subequations}
    \begin{multline*}      
      \norm{ \bs x(t_n) - \fdx(t_n)}_{\vV}
      \lesssim
      T \; \
      \sum_{j=0}^{2} 
      {\Big[\max_{0\leq \tau \leq t_n} \inf_{\substack{\bs \phi_{h,j} \in \XX_h \\ \bs \psi_{h,j} \in \YY_h}}
        {\big\|{\big(\bs \psi^{(j+1)} - \bs \psi_{h,j},\bs \phi^{(j)}-\bs \phi_{h,j}\big)}}\big\|_{\mathbb{B}}
        \Big]}\\
      + T^2 k^{\min(q+\mu+1+\alpha,p)}   \triplenorm{(\gamma \uinc,\partial_{\nu} \uinc)}_{p+4,T,\mu}.
    \end{multline*}   
    If the method is stiffly accurate, we get:
    \begin{multline*}      
      \norm{ \bs \phi(t) - \fdl(t_n)}_{\HH^{-1/2}}
      \lesssim  T \; \
      \sum_{j=0}^{3} 
      {\Big[\max_{0\leq \tau \leq t_n} \inf_{\substack{\bs \phi_{h,j} \in \XX_h \\ \bs \psi_{h,j} \in \YY_h}}
        {\big\|{\big(\bs \psi^{(j+1)} - \bs \psi_{h,j},\bs \phi^{(j)}-\bs \phi_{h,j}\big)}}\big\|_{\mathbb{B}}
        \Big]}\\
      +T^2 k^{r_{\bs \phi}}  \triplenorm{(\gamma \uinc,\partial_{\nu} \uinc)}_{p+5,T,\mu},            
    \end{multline*}
    where the rate is given by
    \begin{align*}
          r_{\bs \phi}:=\begin{cases}
            q+ \mu +\alpha - 1/2 & \text{for } q+ \alpha < p \\
            q+ \alpha + \frac{\mu-1}{2} & \text{for } q+ \alpha = p \\
            p + \frac{\alpha -1}{2} & \text{for } q+ \alpha >  p.
      \end{cases}
  \end{align*}
    \end{subequations}
    The implied constant $C(\uinc)$  
      depends on the geometry and the Runge-Kutta method but is independent of
      the incident wave, $h$, $k$, and $T$.
  \end{theorem}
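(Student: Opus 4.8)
The plan is to insert the purely spatial semidiscretization as an intermediate step and add the two resulting error contributions. Let $\bs x^h=(\bs u^h,\bs w^h)$ denote the solution of the semidiscrete problem~\eqref{eq:probMain} (equivalently of~\eqref{eq:33.5}) for the fixed pair $\XX_h$, $\YY_h$, and let $\bs\lambda^h=(\bs\psi^h,\bs\phi^h)$ be the solution of the corresponding semidiscrete TDBIE~\eqref{eq:36}. I would then write
\[
\bs x(t_n)-\fdx(t_n)=\bigl(\bs x(t_n)-\bs x^h(t_n)\bigr)+\bigl(\bs x^h(t_n)-\fdx(t_n)\bigr),
\]
and similarly $\bs\phi(t_n)-\fdl(t_n)=\bigl(\bs\phi(t_n)-\bs\phi^h(t_n)\bigr)+\bigl(\bs\phi^h(t_n)-\bs\phi^{h,k}(t_n)\bigr)$. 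The first point to record is that the space- and time-discretizations commute: the post-processed $\fdx$ built from~\eqref{eq:4.1} via the representation formula~\eqref{eq:4.1c}--\eqref{eq:def_postprocessed_fd} is exactly the RK-CQ discretization~\eqref{eq:4.5_rk_form} of the semidiscrete evolution problem whose exact-in-time solution is $\bs x^h$ --- this is the equivalence Proposition of Section~\ref{sect:rkcq} relating~\eqref{eq:4.1} and~\eqref{eq:4.5}, together with the continuous-level equivalences of Proposition~\ref{prop:2.2} and Theorem~\ref{thm:32}. Hence $\fdx=\bs x^{h,k}$ and $\fdl=\bs\phi^{h,k}$ in the notation of Theorems~\ref{thm:rk_convergence} and~\ref{thm:rk_convergence_phi}, so both terms in each split are governed by results proved earlier.

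For the spatial terms I would invoke Theorem~\ref{thm:approximation_estimate_in_space} with data $\bs\xi^0=(\traceint0\uinc,0,\dots,0)$ and $\bs\xi^1=(\kappa_0\normaltraceint0\nabla\partial_t^{-1}\uinc,0,\dots,0)$, so that the exact and semidiscrete densities $\bs\lambda,\bs\lambda^h$ of that theorem coincide with the ones here. The best-approximation operator $\Pi\colon\mathbb B\to\YY_h\times\XX_h$ acts componentwise with respect to the product norm, so
$\|(\bs\psi^{(j+1)},\bs\phi^{(j)})-\Pi(\bs\psi^{(j+1)},\bs\phi^{(j)})\|_{\mathbb B}=\inf_{\bs\psi_{h,j}\in\YY_h,\ \bs\phi_{h,j}\in\XX_h}\|(\bs\psi^{(j+1)}-\bs\psi_{h,j},\ \bs\phi^{(j)}-\bs\phi_{h,j})\|_{\mathbb B}$,
which converts the first and third estimates of Theorem~\ref{thm:approximation_estimate_in_space} into the best-approximation sums appearing in the claim (the index set $\{1,2,3\}$ there is contained in the $\{0,\dots,3\}$ written in the statement, so the bound stays valid), and the prefactor $t\le T$ there produces the leading $T$ in the claim. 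Here one has to check that the hypothesis of Theorem~\ref{thm:approximation_estimate_in_space}, namely $\bs\xi\in\mathcal C^{6}([0,\infty);\mathbb B)$ with $\bs\xi^{(\ell)}(0)=0$ for $\ell\le5$, follows from the standing assumptions $\gamma\uinc\in\mathcal C^{p+5}$ and $\partial_\nu\uinc\in\mathcal C^{p+4}$ (recall $p\ge1$ because $\rkA$ is invertible, hence $p+5\ge6$) together with the causal support condition on $u^{\mathrm{inc}}$, which forces the relevant traces and their time derivatives to vanish at $t=0$.

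For the temporal terms I would apply Theorem~\ref{thm:rk_convergence} to $\bs x^h$: estimate~\eqref{eq:rk_conv_A} bounds $\|\bs x^h(t_n)-\fdx(t_n)\|_{\vV}$ by $C\,T^{2}k^{\cdot}\,\triplenorm{(\gamma\uinc,\partial_\nu\uinc)}_{p+4,T,\mu}$ at the rate stated there, and, when the Runge-Kutta method is additionally stiffly accurate, Theorem~\ref{thm:rk_convergence_phi} (estimate~\eqref{eq:rk_conv_lambda}) bounds $\|\bs\phi^h(t_n)-\bs\phi^{h,k}(t_n)\|_{\HH^{-1/2}}$ by $C\,T^{2}k^{r_{\bs\phi}}\,\triplenorm{(\gamma\uinc,\partial_\nu\uinc)}_{p+5,T,\mu}$ with the $r_{\bs\phi}$ quoted in the claim; both regularity requirements are covered by the hypotheses. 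Adding the spatial and temporal contributions via the triangle inequality, and using that each invoked theorem already supplies a constant depending only on the geometry and the Runge-Kutta method (independent of $u^{\mathrm{inc}}$, $h$, $k$, $T$), produces the two displayed estimates.

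I do not expect a genuine obstacle: the statement is essentially an assembly of Theorems~\ref{thm:approximation_estimate_in_space}, \ref{thm:rk_convergence} and~\ref{thm:rk_convergence_phi}. The only points requiring care are the commutation of the two discretizations (already established) and the bookkeeping needed to verify that a single set of regularity and compatibility assumptions on $u^{\mathrm{inc}}$ simultaneously feeds all three invoked results, and that the abstract best-approximation term $\|\cdot-\Pi\cdot\|_{\mathbb B}$ is indeed the componentwise infimum over $\XX_h$ and $\YY_h$ appearing in the statement.
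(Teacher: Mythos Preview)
Your proposal is correct and follows essentially the same route as the paper's own proof: a triangle-inequality split through the spatially semidiscrete solution $\bs x^h$, then an application of Theorem~\ref{thm:approximation_estimate_in_space} for the spatial contribution and Theorems~\ref{thm:rk_convergence} and~\ref{thm:rk_convergence_phi} for the temporal one. The paper's proof is considerably terser than yours, omitting the explicit remarks on the commutation of the two discretizations, the identification of $\|\cdot-\Pi\cdot\|_{\mathbb B}$ with the componentwise infimum, and the regularity bookkeeping; these are exactly the details you spell out, and they are all handled correctly.
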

  \begin{proof}
  We estimate
  $$\big\|{ \bs x(t_n) - \fdx(t_n)}\big\|_{\vV} \lesssim\big\|{ \bs x(t_n) - \bs x^{h}(t_n)}\big\|_{\vV} + \big\|{ \bs x^h(t_n) - \fdx(t_n)}\big\|_{\vV}.$$
  The convergence of the semi-discretization in space is quasi-optimal by Theorem~\ref{thm:approximation_estimate_in_space}. 
  The convergence with respect to time can be estimated by Theorem~\ref{thm:rk_convergence}, where Lemma~\ref{lemma:more_regularity} and
  Corollary~\ref{exact_solution_in_hH_mu} tell us that we may use the value $\mu=1/2$.
  The bounds on $\bs \psi - \fdp$ follows from the continuity of the trace operator.  The trace on $\bs \phi - \fdl$ follows along the same
  lines but using the bounds proved in Theorem~\ref{thm:approximation_estimate_in_space} and Theorem~\ref{thm:rk_convergence_phi}.
\end{proof}

\begin{assumption}
  \label{ass:assumption_on_spaces}
  For a parameter $r \in \N_0$, the discrete spaces $\XX_h$ and $\YY_h$ satisfy the following approximation property
  for all $\bs \phi:=\left(\phi_{\ell}\right)_{\ell=0}^{L} \in \XX$ with $\phi_{\ell}\in \hppw{r+1}\left(\partial \Omega_{\ell}\right)$
  and all $\bs \psi:=\left(\psi_{\ell}\right)_{\ell=0}^{L} \in \YY$, $\psi_{\ell}\in \hppw{r+2}\left(\partial \Omega_{\ell}\right)$
  such that the lifting in~\eqref{eq:def_YY} is a continuous function on $\Gamma$: 
  \begin{subequations}
    \begin{align}
      \inf_{\bs \phi_h \in \XX_h}{\norm{\bs \phi - \bs \phi_h}_{\HH^{-1/2}}}
      &\leq C h^{r+3/2} \sum_{\ell=0}^{L}{\norm{\phi_{\ell}}_{\hppw{r+1}\left(\partial \Omega_{\ell}\right)}}, \\
      \inf_{\bs \psi_h \in \YY_h}{\norm{\bs \psi - \bs \psi_h}_{\HH^{1/2}}}
      &\leq C h^{r+3/2} \sum_{\ell=0}^{L}{\norm{\psi_{\ell}}_{\hppw{r+2}\left(\partial \Omega_{\ell}\right)}},
    \end{align}
  \end{subequations}
  where the constant $C$ may depend on $r$ and the geometry but not on $h$, $\bs \phi$ or $\bs \psi$.
\end{assumption}

Since we have to implement the scheme in practice, we only consider the case $\partial_\nu u^{inc} \in L^2(\partial \Omega_0)$,
i.e., $\mu=1/2$. Then the following theorem holds.
\begin{corollary}
  \label{thm:full_convergence}
  Let the assumptions of Theorem~\ref{thm:full_convergence_quasioptimal} hold.
    Assume that the traces of the exact solution satisfy
  $\phi_{\ell} \in \Cpspace[\hppw{r+1}\left(\partial \Omega_{\ell}\right)]{3}$, $\psi_{\ell} \in \Cpspace[\hppw{r+2}\left(\partial \Omega_{\ell}\right)]{3}$
  for some $r \in \N_0$.
  Also assume that $\bs \psi^{(j)}$ admits a lifting to $H^1(\R^d)$ that is continuous on $\Gamma$ for $j=0,\dots,3$.  
  Assume $d\leq 3$ and let  Assumption~\ref{ass:assumption_on_spaces}
  be satisfied for $\XX_h$ and $\YY_h$ with the same parameter $r$ as in the regularity assumptions.
  Then the following estimates hold for $t_n=n k$ with $t_n \leq T$:  
  \begin{subequations}
    \begin{align}      
      \norm{ \bs x(t_n) - \fdx(t_n)}_{\vV} &\leq C(\uinc) \left(T h^{r+3/2} + T^2 k^{\min(q+\alpha + 1/2,p)}\right),
                                              \label{eq:full_convergence_1}     \\
      \norm{ \bs \psi(t_n) - \fdp(t_n)}_{\HH^{1/2}} &\leq C(\uinc) \left( T h^{r+3/2} + T^2 k^{\min(q+\alpha+1/2,p)}\right).
                                                         \label{eq:full_convergence_psi}
    \end{align}
    If the method is stiffly accurate, we get:
    \begin{align}
      \norm{ \bs \phi(t) - \fdl(t_n)}_{\HH^{-1/2}} &\leq  C(\uinc) \big(T h^{r+3/2} + T^2 k^{r_{\bs \phi}}\big).
                                                     \label{eq:full_convergence_lambda}
    \end{align}
    where the rate given by
    \begin{align*}
    r_{\bs \phi}:=\begin{cases}
      q+ \alpha & \text{for } q+ \alpha < p \\
      q+ \alpha - \frac{1}{4} & \text{for } q+ \alpha = p \\
      p + \frac{\alpha -1}{2} & \text{for } q+ \alpha >  p.
      \end{cases}
  \end{align*}
    \end{subequations}
    The constant $C(\uinc)$ depends on the incident  wave, the geometry, the Runge-Kutta method, and the constants in Assumption~\ref{ass:assumption_on_spaces},
    but is independent of $h$, $k$, and $T$.
\end{corollary}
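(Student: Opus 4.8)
The plan is to obtain the corollary as a direct consequence of Theorem~\ref{thm:full_convergence_quasioptimal}: the only work is to convert the abstract best-approximation terms appearing there into explicit powers of $h$ by means of Assumption~\ref{ass:assumption_on_spaces}, and to specialize the temporal rates to $\mu=1/2$. First I would check that the hypotheses of Theorem~\ref{thm:full_convergence_quasioptimal} are met, which is immediate: $\gamma\uinc\in\Cpspace[H^{1/2}(\partial\Omega_0)]{p+5}$ and $\partial_\nu\uinc\in\Cpspace[H^{-1/2+\mu}(\partial\Omega_0)]{p+4}$ with $\mu=1/2$ are assumed, and the A-stability and invertibility conditions on $\rkA$ (with the usual convention for $\alpha$) are the same. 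That theorem then splits $\norm{\bs x(t_n)-\fdx(t_n)}_{\vV}$ (and, in the stiffly accurate case, $\norm{\bs\phi(t_n)-\fdl(t_n)}_{\HH^{-1/2}}$) into a spatial part, a finite sum over $j$ of terms $\max_{0\le\tau\le t_n}\inf_{\bs\phi_{h,j}\in\XX_h,\,\bs\psi_{h,j}\in\YY_h}\norm{(\bs\psi^{(j+1)}-\bs\psi_{h,j},\bs\phi^{(j)}-\bs\phi_{h,j})}_{\mathbb B}$, and a temporal part governed by $k^{\min(q+\mu+\alpha,p)}$ resp.\ $k^{r_{\bs\phi}}$ and a factor $\triplenorm{(\gamma\uinc,\partial_\nu\uinc)}_{p+5,T,\mu}$.

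Second I would bound each spatial term. Since $\mathbb B=\HH^{1/2}_\Gamma\times\HH^{-1/2}_\Gamma$, I split the norm and choose $\bs\psi_{h,j}\in\YY_h$, $\bs\phi_{h,j}\in\XX_h$ as separate best approximations of $\bs\psi^{(j+1)}$ and $\bs\phi^{(j)}$. The regularity assumptions $\psi_\ell\in\Cpspace[\hppw{r+2}(\partial\Omega_\ell)]{3}$ and $\phi_\ell\in\Cpspace[\hppw{r+1}(\partial\Omega_\ell)]{3}$, together with the assumption that each $\bs\psi^{(j)}$ admits a lifting to $H^1(\R^d)$ that is continuous on $\Gamma$, place us exactly in the setting of Assumption~\ref{ass:assumption_on_spaces} (the hypothesis $d\le3$ being what makes the underlying nodal-interpolation construction of Section~\ref{sect:construction} work), so that
\[
\inf_{\bs\phi_{h,j}\in\XX_h,\,\bs\psi_{h,j}\in\YY_h}\norm{(\bs\psi^{(j+1)}-\bs\psi_{h,j},\,\bs\phi^{(j)}-\bs\phi_{h,j})}_{\mathbb B}\le C\,h^{r+3/2}\sum_{\ell=0}^{L}\Big(\norm{\psi_\ell^{(j+1)}}_{\hppw{r+2}(\partial\Omega_\ell)}+\norm{\phi_\ell^{(j)}}_{\hppw{r+1}(\partial\Omega_\ell)}\Big).
\]
Taking $\max_{0\le\tau\le t_n}$ and summing over the finitely many $j$ bounds the spatial part by $C(\uinc)\,h^{r+3/2}$, the bounded time-suprema of the Sobolev norms of the exact traces $\psi_\ell,\phi_\ell$ (which depend only on $\uinc$ and the geometry, by well-posedness of \eqref{eq:39}) being absorbed into $C(\uinc)$, along with $\triplenorm{(\gamma\uinc,\partial_\nu\uinc)}_{p+5,T,\mu}$.

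Finally I would collapse the temporal exponents. Setting $\mu=1/2$, the $\vV$-rate $\min(q+\mu+\alpha,p)$ from \eqref{eq:rk_conv_A} becomes $\min(q+\alpha+1/2,p)$, and in the stiffly accurate case the three branches of $r_{\bs\phi}$ from Theorem~\ref{thm:rk_convergence_phi} (as used in Theorem~\ref{thm:full_convergence_quasioptimal}) become $q+\alpha$ for $q+\alpha<p$, $q+\alpha-1/4$ for $q+\alpha=p$, and $p+(\alpha-1)/2$ for $q+\alpha>p$. Combining with the spatial bound yields \eqref{eq:full_convergence_1}--\eqref{eq:full_convergence_lambda}. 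I expect the only delicate point to be the bookkeeping: verifying that the assumed time-regularity of $\psi_\ell,\phi_\ell$ and the continuity of the liftings cover every derivative $\bs\psi^{(j+1)},\bs\phi^{(j)}$ that actually enters the best-approximation sums of Theorem~\ref{thm:full_convergence_quasioptimal}, and carrying out the case analysis for $r_{\bs\phi}$ correctly; the rest is routine assembly.
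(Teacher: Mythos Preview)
Your proposal is correct and follows the same approach as the paper, whose proof is a single sentence: ``Follows directly from Theorem~\ref{thm:full_convergence_quasioptimal}, the regularity assumptions and Assumption~\ref{ass:assumption_on_spaces}.'' You have simply unpacked that sentence---specializing $\mu=1/2$ in the temporal rates and invoking Assumption~\ref{ass:assumption_on_spaces} to turn each best-approximation term into an explicit $h^{r+3/2}$---and correctly flagged that the only care needed is matching the derivative indices $\bs\psi^{(j+1)},\bs\phi^{(j)}$ against the assumed $\mathcal C^3$-regularity and lifting hypotheses.
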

\begin{proof}
  Follows directly from Theorem~\ref{thm:full_convergence_quasioptimal}, the regularity assumptions and Assumption~\ref{ass:assumption_on_spaces}.
\end{proof}

\section{Particular geometric configurations}
\label{sect:particular_configurations}

In this section we present two simple geometric configurations
that fit into our framework. We show how in these cases, using the spaces presented
in Section~\ref{sect:construction}, the method analyzed in this paper can be considered
equivalent to known methods in the literature.

\subsection{Multiple homogeneous scatterers}

Assume that $\Omega_\ell$, $\ell=1,\ldots, L$, are bounded Lipschitz domains such that each of the boundaries $\partial\Omega_\ell$ is connected and these boundaries are mutually disjoint. In this case
\[
H^{\pm 1/2}(\partial\Omega_0)\equiv \prod_{\ell=1}^L H^{\pm1/2}(\partial\Omega_\ell).
\]
Using this identification we can establish isomorphisms
\begin{alignat*}{6}
\prod_{\ell=1}^L H^{1/2}(\partial\Omega_\ell)
	\ni (\psi_\ell)_{\ell=1}^L & \longmapsto && 
	((\psi_1,\ldots,\psi_L),\psi_1,\ldots,\psi_L) \in \YY,\\
\prod_{\ell=1}^L H^{-1/2}(\partial\Omega_\ell)
	\ni(\phi_\ell)_{\ell=1}^L& \longmapsto &&
	(-(\phi_1,\ldots,\phi_L),\phi_1,\ldots,\phi_L))\in \XX.
\end{alignat*}
In some way, the second isomorphism can be understood as changing the orientation of the normal vector exterior to the unbounded domain $\Omega_0$ to make it point towards it.
The boundary integral formulation that we got before (see (\ref{eq:39})) can be reformulated in the reduced representation. What we obtain is the Costabel-Stephan system of TDBIE for transmission problems of \cite{tianyu_sayas_costabel}, analyzed as a first order system; see also~\cite{qiu_thesis}.
The frequency domain version of this reduced system is the classical formulation in~\cite{costabel_stephan_trasmission}.

\subsection{Layered scatterers}
Another interesting simplified situation, already considered in \cite{qiu_thesis},
is the one of scatterers containing separate inclusions, which can themselves contain inclusions, etc. We can represent the geometric configuration as a tree, whose nodes are the domains $\Omega_\ell$, rooted at $\Omega_0$, and whose edges are the connected components of $\Gamma$, so that an edge connecting two vertices is the common boundary of both domains. We can  number the components of $\Gamma$ as $\Gamma_i$, $i=1,\ldots,L$ and assign a parent node $p(\ell)\in \{0,\ldots,L\}$ to each node $\ell\ge 1$ so that
\[
\partial\Omega_\ell \cap \partial \Omega_{p(\ell)}=\Gamma_\ell, \qquad \ell\in \{1,\ldots,L\}
\]
and
\[
\partial\Omega_\ell=\Gamma_\ell \cup \left( \cup\{ \Gamma_i\,:\, p(i)=\ell\}\right),
\qquad \ell\in \{0,\ldots,L\}
\]
where we denote $\Gamma_0:=\emptyset$ to unify notation;  see Figure~\ref{fig:layered_scatterers} for a schematic representation.
We can thus identify
\[
H^{\pm1/2}(\partial\Omega_\ell)
	\equiv H^{\pm1/2}(\Gamma_\ell) \times \prod_{\{i:p(i)=\ell\}} H^{\pm1/2}(\Gamma_i)
\]
and use this to define isomorphisms
\begin{alignat*}{6}
\prod_{i=1}^L H^{1/2}(\Gamma_i) \ni (\psi_i)_{i=1}^L
	&\longmapsto && 
	\big( (\psi_i)_{p(i)=0}, (\psi_\ell, (\psi_i)_{p(i)=\ell})\big)_{\ell=0}^L \in \YY,\\
\prod_{i=1}^L H^{-1/2}(\Gamma_i)\ni (\phi_i)_{i=1}^L
	&\longmapsto && 
	\big( -(\phi_i)_{p(i)=0}, 
	(\phi_\ell, -(\phi_i)_{p(i)=\ell})\big)_{\ell=0}^L \in \XX,
\end{alignat*}
morally corresponding to fixing all normals so that they point towards the exterior of the closed boundaries $\Gamma_i$.

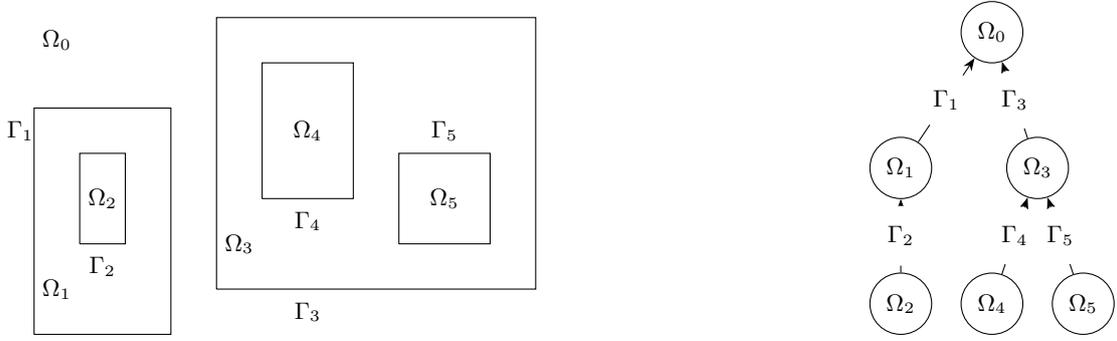
\begin{figure}[htb]
\begin{center}
\begin{tikzpicture}[scale=0.6]
\draw (1,1) -- (4,1) -- (4,6) -- (1,6) -- (1,1);
\draw (2,3) -- (3,3) -- (3,5) -- (2,5) -- (2,3);
\draw (5,2) -- (12,2) -- (12,8) -- (5,8) -- (5,2);
\draw (6,4) -- (8,4) -- (8,7) -- (6,7) -- (6,4);
\draw (9,3) -- (11,3) -- (11,5) -- (9,5) -- (9,3);
\draw (1.5,7.5) node {\footnotesize $\Omega_0$};
\draw (1.5,2) node {\footnotesize $\Omega_1$};
\draw (2.5,4) node {\footnotesize $\Omega_2$};
\draw (5.5,3) node {\footnotesize $\Omega_3$};
\draw (7,5.5) node {\footnotesize $\Omega_4$};
\draw (10,4) node {\footnotesize $\Omega_5$};
\draw (0.7,5.5) node {\footnotesize $\Gamma_1$};
\draw (2.5,2.5) node {\footnotesize $\Gamma_2$};
\draw (7,1.5) node {\footnotesize $\Gamma_3$};
\draw (7,3.5) node {\footnotesize $\Gamma_4$};
\draw (10,5.5) node {\footnotesize $\Gamma_5$};
\end{tikzpicture}
\hfill
\begin{tikzpicture}[scale=0.6]
\begin{scope}[every node/.style={circle,draw}]
    \node (A) at (3,5) {\footnotesize $\Omega_0$};
    \node (B) at (1,2) {\footnotesize $\Omega_1$};
    \node (C) at (1,-1) {\footnotesize $\Omega_2$};
    \node (D) at (4,2) {\footnotesize $\Omega_3$};
    \node (E) at (3,-1) {\footnotesize $\Omega_4$};
    \node (F) at (5,-1) {\footnotesize $\Omega_5$} ;
\end{scope}
\begin{scope}[>={Stealth[black]},
              every node/.style={fill=white,circle},
              every edge/.style={draw}]
    \path [<-] (A) edge node {\footnotesize $\Gamma_1$} (B);
    \path [<-] (B) edge node {\footnotesize $\Gamma_2$} (C);
    \path [<-] (A) edge node {\footnotesize $\Gamma_3$} (D);
    \path [<-] (D) edge node {\footnotesize $\Gamma_4$} (E);
    \path [<-] (D) edge node {\footnotesize $\Gamma_5$} (F);
\end{scope}
\end{tikzpicture}
\end{center}
\caption{A cartoon representing two scatterers with inclusions, and the corresponding tree. The arrows point to the parenting (surrounding) domain and are tagged with the common intersection.}
\label{fig:layered_scatterers}
\end{figure}

\section{Numerical examples}
\label{sect:numerics}
We implemented the proposed method in 2D, using the algorithm for fast solutions of CQ-problems described in \cite{banjai_sauter_rapid_wave}. For the implementation of the
standard BEM operators, we relied on code developed by F.-J.~Sayas and his group at the University of Delaware.
This code has not been published as of yet and differs from the better known deltaBEM~\cite{deltabem}
  package. Namely, it implements a Galerkin scheme instead of a Nystr\"om type method.

Assembling the matrices for Problem~\eqref{eq:4.1} can be done fairly simply using existing boundary element code with the
spaces described in Section~\ref{sect:construction}. In order to do so, we only have to provide the transfer matrices
$\mathcal{R}^{\top}:  \mathcal{Q}_h \times \mathcal{P}_h  \to  \YY_h \times \XX_h$,
which map the degrees of freedom from the boundary element spaces $\mathcal{P}_h$ and $\mathcal{Q}_h$ to the
standard BEM spaces on each subdomain (respecting the orientation of the surfaces in the case of $\mathcal{P}_h$).

The discretization of~\eqref{eq:4.1} is equivalent to:
find ${\Lambda}^{h,k}:=({\Psi}^{h,k},{{\Phi}}^{h,k}) \in \prodSpace{\mathcal{Q}_h \times \mathcal{P}_h } $ such that
\begin{align*}
  \check{\mathcal{R}}\check{ \mathrm Q}_\kappa \mathrm{\bs{C}}(\dd)\check{\mathrm Q}_\kappa^{-1} \check{\mathcal{R}}^\top {\Lambda}^{h,k}
      &= 
      \check{\mathcal{R}} \check{\mathrm Q}_\kappa (\mathrm{\bs{C}}(\dd)-\tfrac12{\mathrm I})
      \check{\mathrm Q}_\kappa^{-1} {\mathrm{\bs J}}(\dd)   \dot{\bs{\Xi}}^k
 % \boldsymbol{Q}^{T} \boldsymbol{Q} \cdot \boldsymbol{C}(\dd) \boldsymbol{T}^{-1} \boldsymbol{Q}  \colvec{x^h \\y^h}
%&= \boldsymbol{Q}^{T} \boldsymbol{Q} \cdot \boldsymbol{C}(\dd) \boldsymbol{T}^{-1} \boldsymbol{Q}  \colvec{x^h \\y^h}
\end{align*}
and then using the transfer matrices $\mathcal{R}$ to get back
the functions $\bs \Lambda^{h,k}:=(\bs \Psi^{h,k},\bs \Phi^{h,k}):=\check{\mathcal{R}}^\top \Lambda^{h,k}$ in $\prodSpace{\YY_h \times \XX_h}$.

When using the approach from \cite{banjai_sauter_rapid_wave} for solving the convolution system, we need to solve $n=T/k$ problems in the frequency domain.
Since the operator $\mathrm{\bs{C}}(s)$ appears on both the left- and right-hand side, it only has to be assembled once if we combine the
steps for computing the right-hand side and solving.
The computation of $\bs J(\dd)$ does not incur any significant additional cost, as it corresponds to
a multiplication with the matrix
 $k\left(\delta(z)\right)^{-1}$ of the second component of the right-hand side during the CQ-algorithm.

 As the model geometry, we use a simple checkerboard pattern consisting of $2 \times 2$ unit squares
 and the wave speed vector $(\kappa_\ell)_{\ell=0}^{4}:=(2,3,1,5,7)$.
  
In order to be able to quantify the convergence, we prescribe an exact solution in the following way:
On each subdomain $\Omega_{\ell}$, $\ell=1,\dots, L$, the solution $u_{\ell}$ is given as a plane wave with
\begin{align*}  
  u_{\ell}(x,t)&:=G\left( d_{\ell}\cdot  x - \kappa_{\ell} (t-t_{\text{lag}}) \right), \qquad \text{ where } \quad G(z):=e^{{-2z}/{\alpha}}\,\sin(z).
\end{align*}
Here, $ d_{\ell} \in \R^2$ denotes the direction of the wave, and we chose the following parameters:
\begin{alignat*}{3}
  d_{\ell}&:=\begin{cases} \frac{1}{\sqrt{2}}(1,-1)^{\top} &\text{$\ell$ is even}\\  
    \frac{1}{\sqrt{2}}(1,1)^{\top} &\text{$\ell$ is odd}  
    \end{cases},
\end{alignat*}
$t_{\text{lag}}=5/2$, and $\alpha:=1/4$.  In order not to have to concern ourselves with radiation conditions,
we chose $u_0:=0$ for the solution in the exterior. 
For the  boundary traces, we made the following choice, using the function $\chi(t):=t^9 \,e^{-2t}$ to ensure homogeneous initial conditions:
\begin{align*}
\psi(x,t)&:=\chi(t)\,\sin(x_1) \cos(x_2) \qquad \text{ and } \qquad \phi(x,t):=\chi(t) \, \cos(x_1) \sin(x_2).
\end{align*}

(These functions are to be understood as functions on the skeleton. $\bs \phi$ is then built by restricting to the subdomains and multiplying with
a sign function as is done in Section~\ref{sect:construction}, whereas $\bs \psi$ is obtained via the restrictions to the subdomains.)
The boundary data $\bs \xi^0$ and $\bs \xi^1$ were then calculated accordingly
in order to yield these solutions.

%\begin{figure}
%  \center
%  \begin{subfigure}{0.4\textwidth}
%    \includeTikzOrEps{checkerboard}
%    \caption{Example geometry and wave-numbers used throughout Section~\ref{sect:numerics}}    
%    \label{fig:checkerboard}
%  \end{subfigure}  
%\end{figure}

\begin{figure}
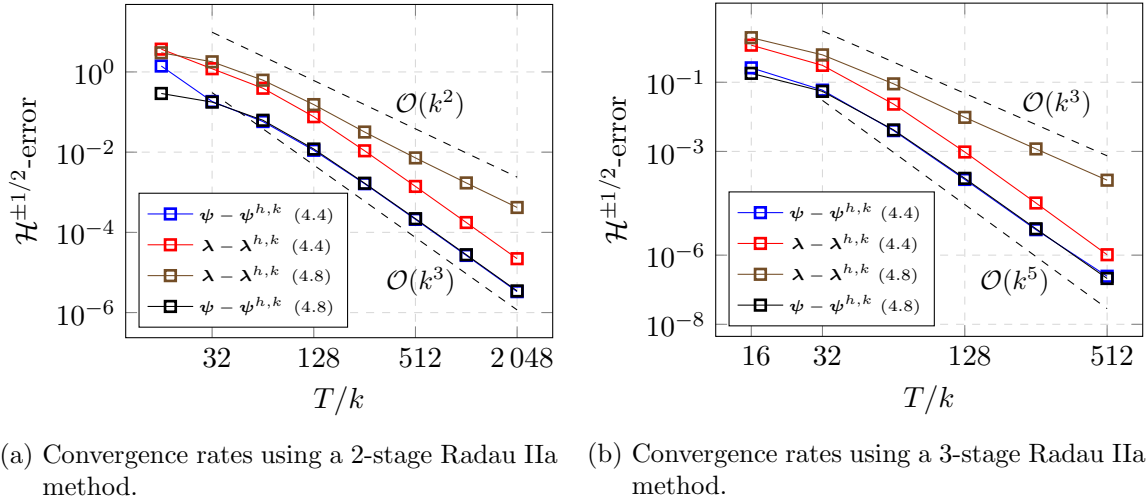

  \begin{subfigure}{0.5\textwidth}
   % \captionsetup{skip=-5mm}
  \center
  \includeTikzOrEps{convergence_scattering}
%    \vspace{-2\baselineskip}
  \caption{Convergence rates using a 2-stage Radau IIa method.}
  \label{fig:conv_radau2}
\end{subfigure}\quad
\begin{subfigure}{0.5\textwidth}
  %\captionsetup{skip=-5mm}
  \center
  \includeTikzOrEps{convergence_scattering_r3}
%    \vspace{-2\baselineskip}
  \caption{Convergence rates using a 3-stage Radau IIa method. }
  \label{fig:conv_radau3}
\end{subfigure}
\caption{Comparison of discretization schemes}
\end{figure}

\begin{figure}
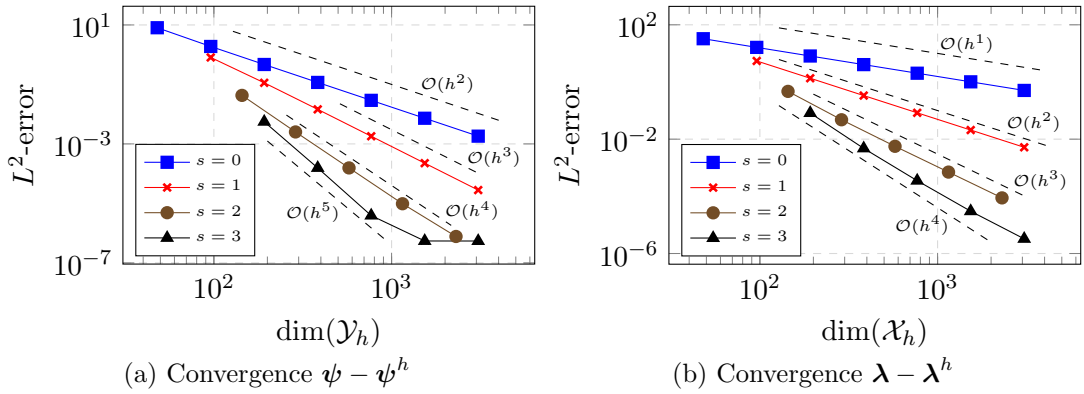

  \center
  \begin{subfigure}{0.48\textwidth}
    \captionsetup{skip=0mm}
    \includeTikzOrEps{convergence_space}
%      \vspace{-2\baselineskip}
    \caption{Convergence $\bs \psi -\bs \psi^h$}
  \end{subfigure}
  \begin{subfigure}{0.48\textwidth}
    \captionsetup{skip=0mm}
    \includeTikzOrEps{conv_space2}
%      \vspace{-2\baselineskip}
    \caption{Convergence $\bs \lambda -\bs \lambda^h$}
  \end{subfigure}
%  \vspace{-1\baselineskip}
  \caption{Convergence rates w.r.t. the spatial discretization}
  \label{fig:conv_space}
\end{figure}

\begin{example}
  \label{ex:conv_radau2}
  In this example, we are interested in the convergence with respect to the time discretization. Therefore, we fix a
  fine uniform mesh with $h \approx 0.03125$ and use $r=4$, i.e., quartic polynomials for the discontinuous space and quintic for the continuous
  splines. We apply a two-stage Radau IIa method, which satisfies $q=2$ and $p=3$. By Theorem~\ref{thm:full_convergence}, we
  expect order $\bigO(k^3)$ for the Dirichlet trace and $\bigO\left(k^{2.75}\right)$ for the Neumann trace when using~\eqref{eq:4.1}.
  As a comparison, we also compute the solutions using~\eqref{eq:the_bad_method}. Figure~\ref{fig:conv_radau2} shows the result.
  Most notably, it shows that when using~\eqref{eq:4.1}, the Neumann trace outperforms our predictions and converges with the full classical order.
  We also see that using~\eqref{eq:the_bad_method} gives a reduced order of $2$ when approximating $\bs \lambda$. 
\eremk
\end{example}

\begin{example}
  \label{ex:conv_radau3}
  We perform the same experiment as in Example~\ref{ex:conv_radau2}, but use a 3-stage Radau IIa method. We expect
  orders $\bigO(k^{4.5})$ and $\bigO(k^{4})$ for the Dirichlet and Neumann traces respectively.
  Again the method~\eqref{eq:4.1} outperforms our expectations, giving the full classical order $5$,
  while using~\eqref{eq:the_bad_method} gives a reduced rate.
\eremk
\end{example}

\begin{remark}
  Examples~\ref{ex:conv_radau2} and~\ref{ex:conv_radau3} showed that the proposed method often outperforms
  the predictions of the theory. While a full theoretical explanation for this effect is still lacking, 
  partial answers can be found in~\cite{cq_superconvergence}  for a simpler model problem.  
\eremk
\end{remark}

%We also look at the convergence rate with respect to the space discretization.
\begin{example}
  \label{ex:conv_space}
  We use the same model problem as in Example~\ref{ex:conv_radau2}, but we fix the time discretization at $k \approx 0.015$ using a $3$-stage
  Radau IIa method. We vary the approximation in space by performing successive uniform refinements of the grid, and compare different polynomial
  degrees $s=0,\dots,3$. Since it is easier to compute, we consider the $L^2$-norm of the errors.
  In Figure~\ref{fig:conv_space} we observe the optimal convergence rates until to an error of $\approx 10^{-6}$ is reached, at which point other error
  contributions prohibit further convergence.  
\eremk
\end{example}

  \begin{example}
    \label{ex:complex_geometry}
    We consider a more realistic scattering problem for which no exact solution is available. Errors are estimated 
    by comparing with a reference solution computed to higher accuracy.
    We consider a 3-by-3 checkerboard domain. The wavenumbers are given by
    $$
    \begin{bmatrix}
      5 & 0.2& 5 \\
      0.2 & 5 & 0.2 \\
      5 & 0.2 & 5
    \end{bmatrix}, 
    $$ and in the exterior it is taken to be $1$.
    This obstacle is hit by an incoming wave of the form
    $
    u^{\textrm{inc}}(x,t):=H(x \cdot d -t)
    $
    with $d:=[1,0]^{\top}$ and
   \begin{align*}
     H(x)&:=\sin(10t) \varphi(t/0.1) \varphi((t-0.3)/0.1), \qquad \text{using} \\
    \varphi(x)&:=
      x^5 \Big(1-5(x-1)+15(x-1)^2-35(x-1)^3+70(x-1)^4-
      126(x-1)^5\Big)
   \end{align*}
   for $x \in (0,1)$ and $\varphi(x):=1$ for $x\geq 1$.
   The function $\varphi \in \mathcal{C}^{5}(\R)$ represent a windowing function, smoothly connecting 
   $0$ and $1$. The precise function was taken from the examples of the DeltaBEM package~\cite{deltabem}.

   Figure~\ref{fig:complicated_evolution} depicts the evolution of the solution over time. Once
   the incoming wave hits the scatterer, we observe complicated intersections, especially
   at the triple-points where the wave number changes between domains.
   Figure~\ref{fig:complicated_convergence} presents the convergence of the method,
   where we compared the solution to the one obtained by halving the step size.
   The boundary element grid was taken fixed with mesh size $2^{-6}$ and  polynomials of degree $4$ and $5$ for
   discretizing $\HH^{-1/2}$ and $\HH^{1/2}$ respectively. Due to the non-smooth structure of the
     solution, 
     we observe a large preasymptotic regime, clouding the true asymptotic convergence rate.
       Nevertheless, for small time steps we still observe a high order of convergence. 
\eremk
 \end{example}

 \begin{figure}
   \captionsetup[subfigure]{position=b}
%      \vspace{-2\baselineskip}
   \caption{Evolution of Example~\ref{ex:complex_geometry}}
   \label{fig:complicated_evolution}
   \subcaptionbox{The incident wave}[0.245\textwidth]{
     \includegraphics[width=4cm]{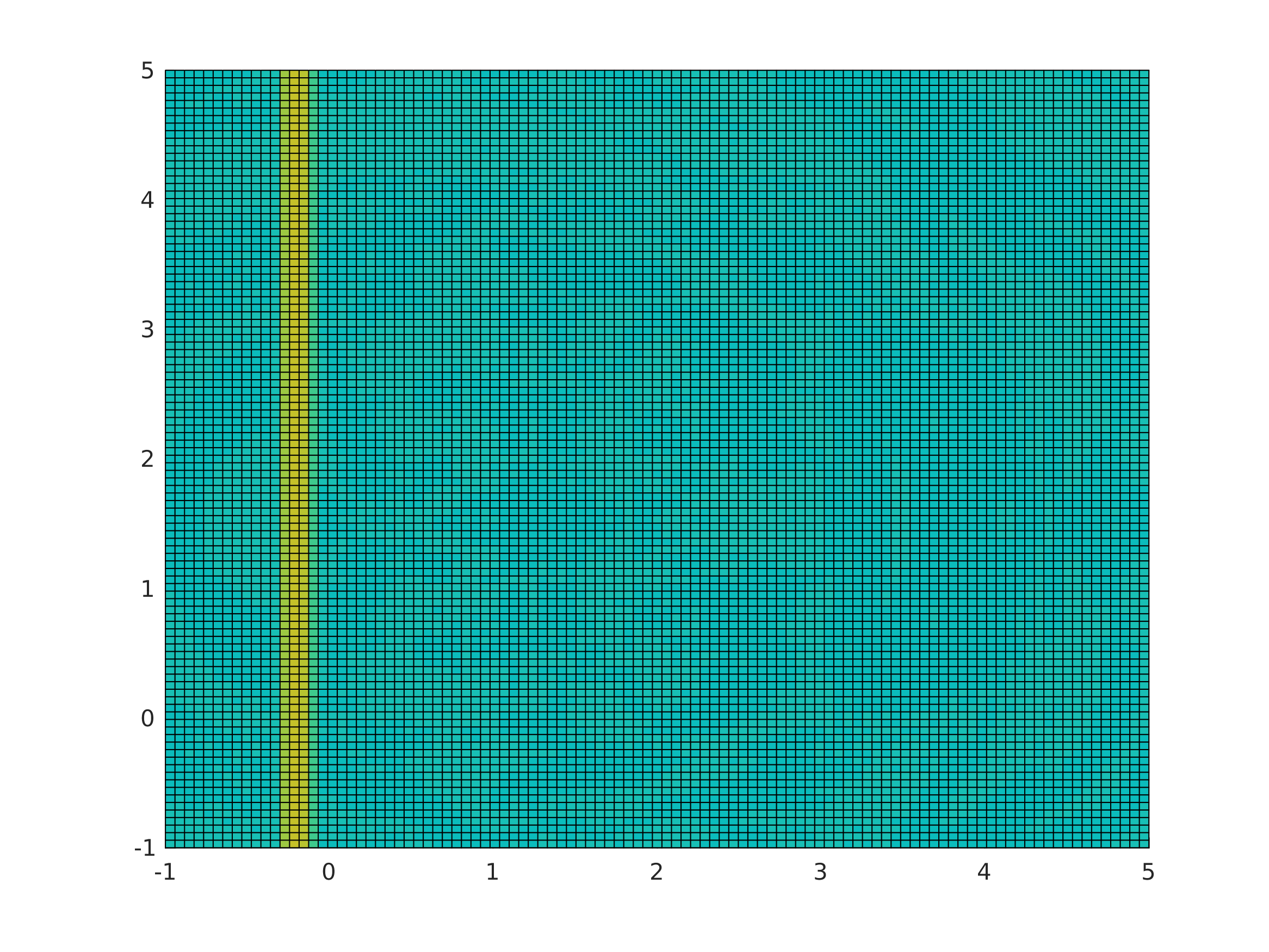}
     \includegraphics[width=4cm]{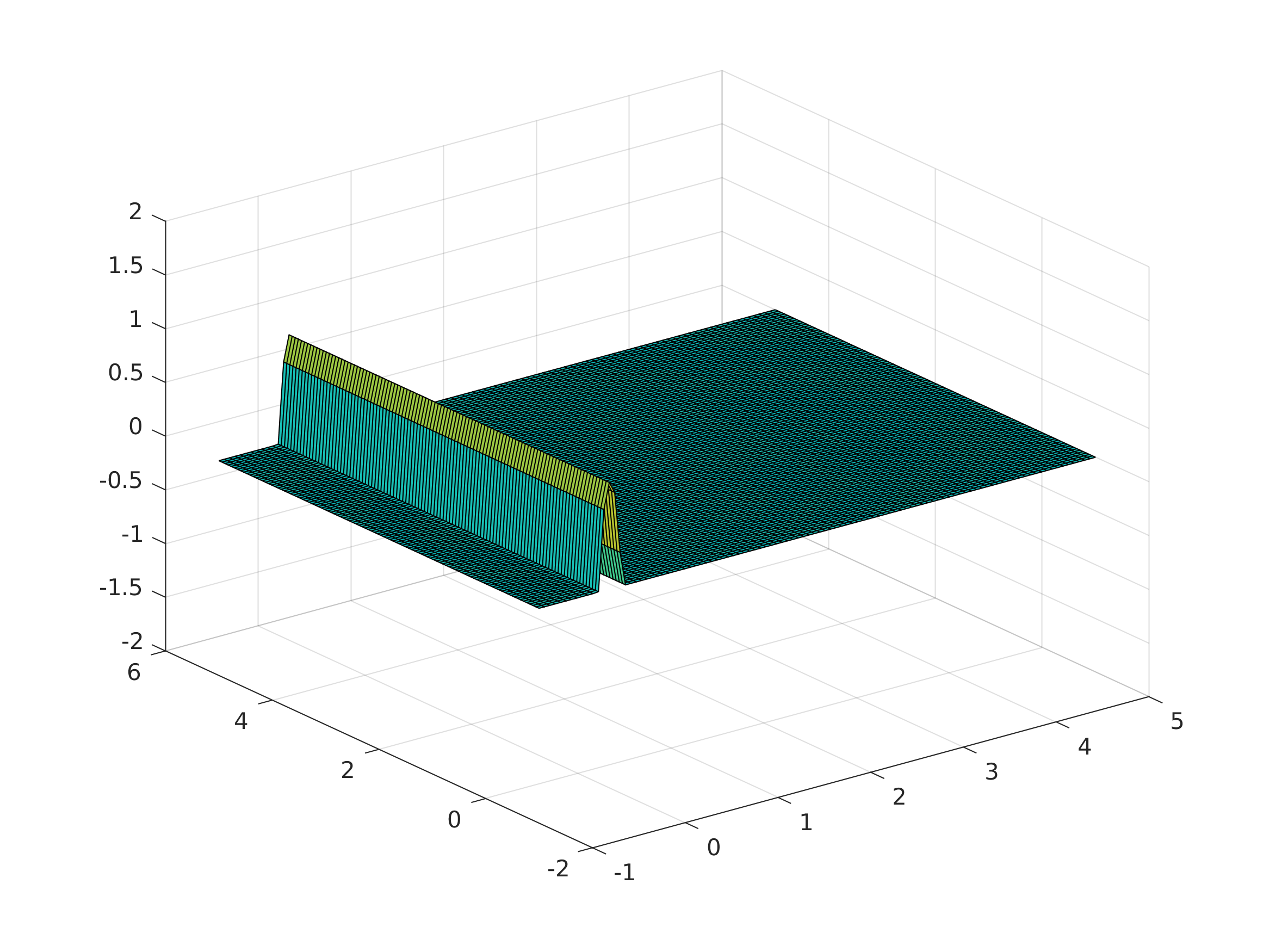}
   }
   \subcaptionbox{The wave hits the checkerboard}[0.245\textwidth]{
     \includegraphics[width=4cm]{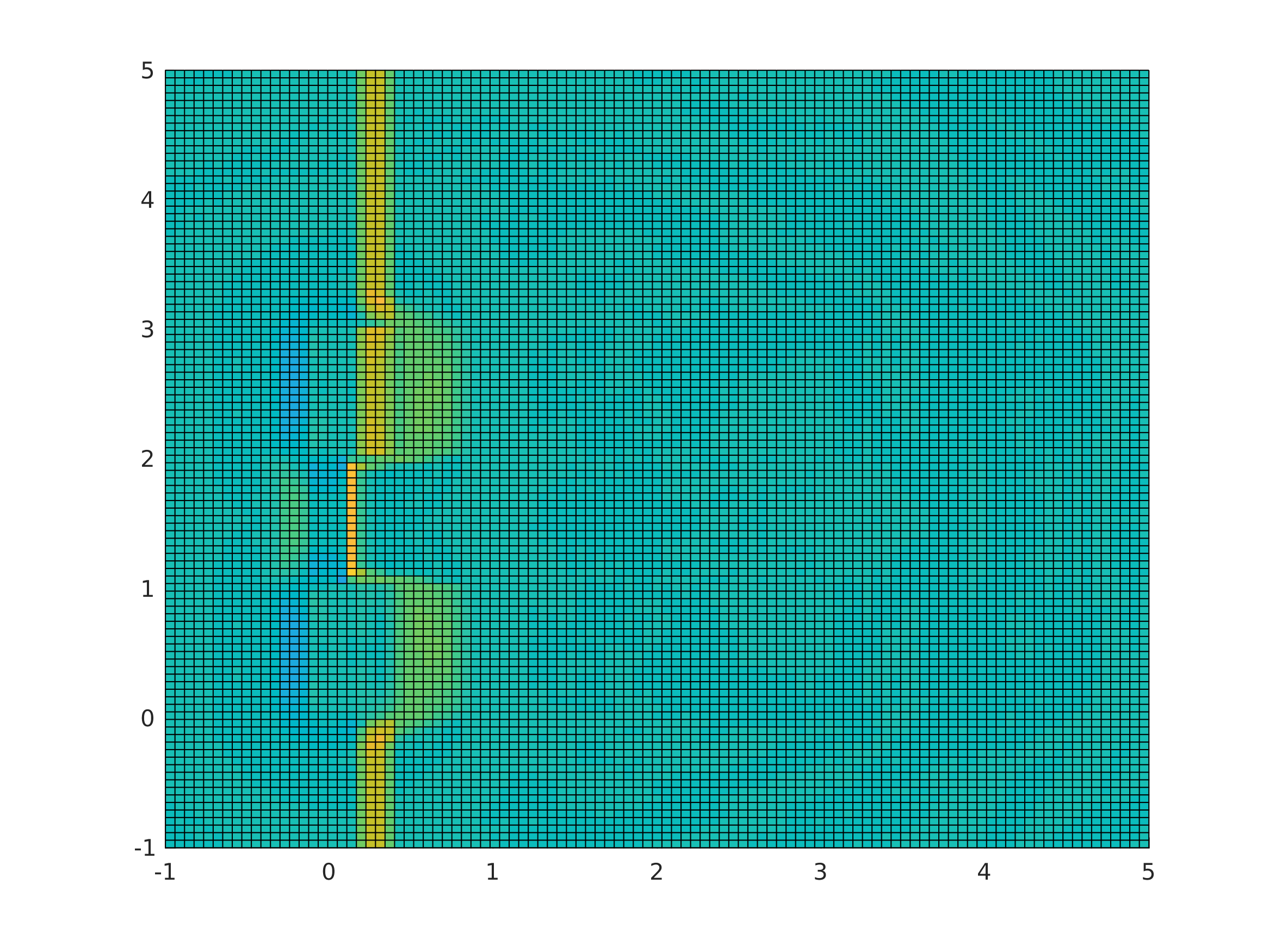}
     \includegraphics[width=4cm]{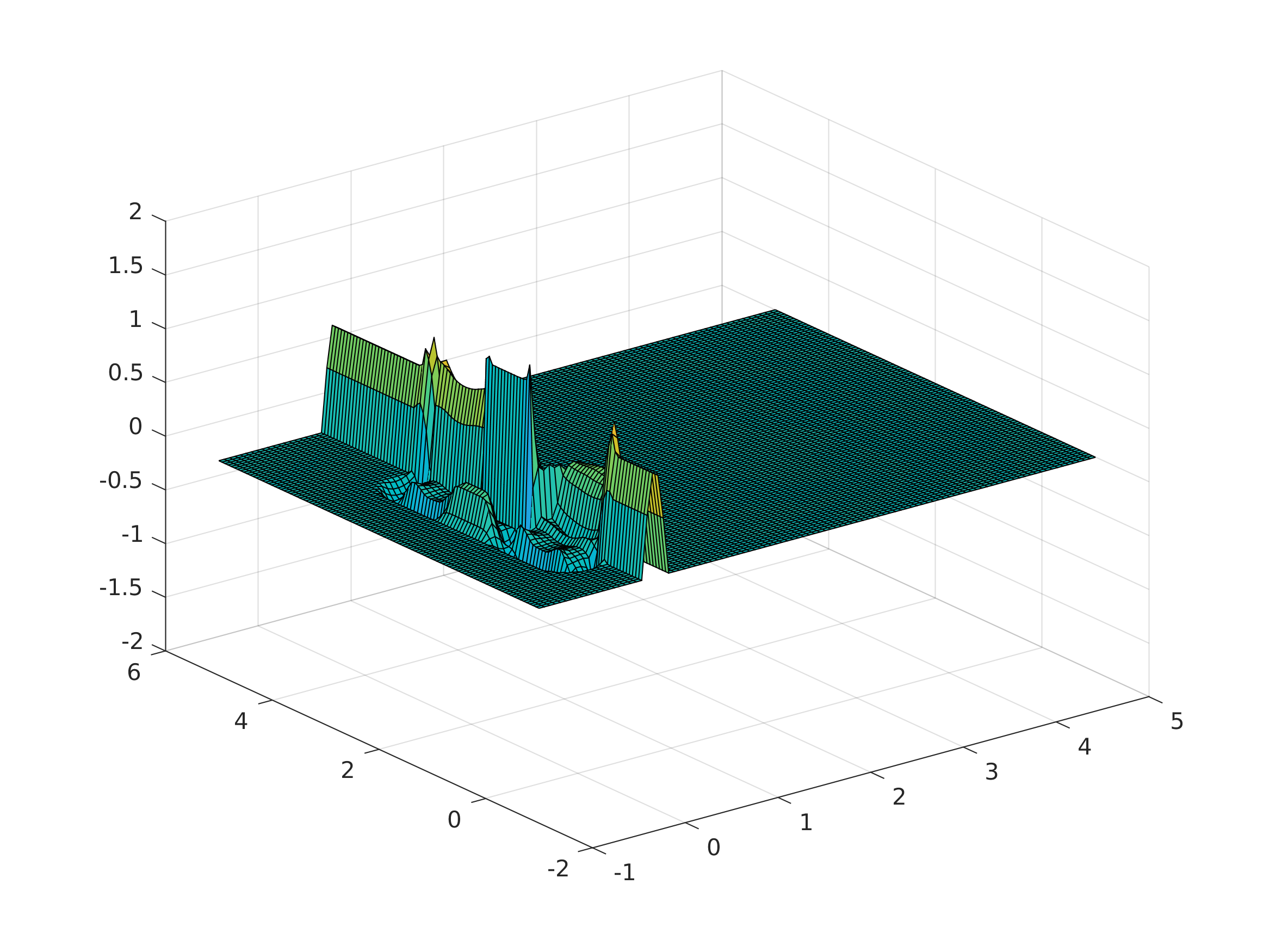}
   }
   \subcaptionbox{Passing through the obstacle}[0.245\textwidth]{
     \includegraphics[width=4cm]{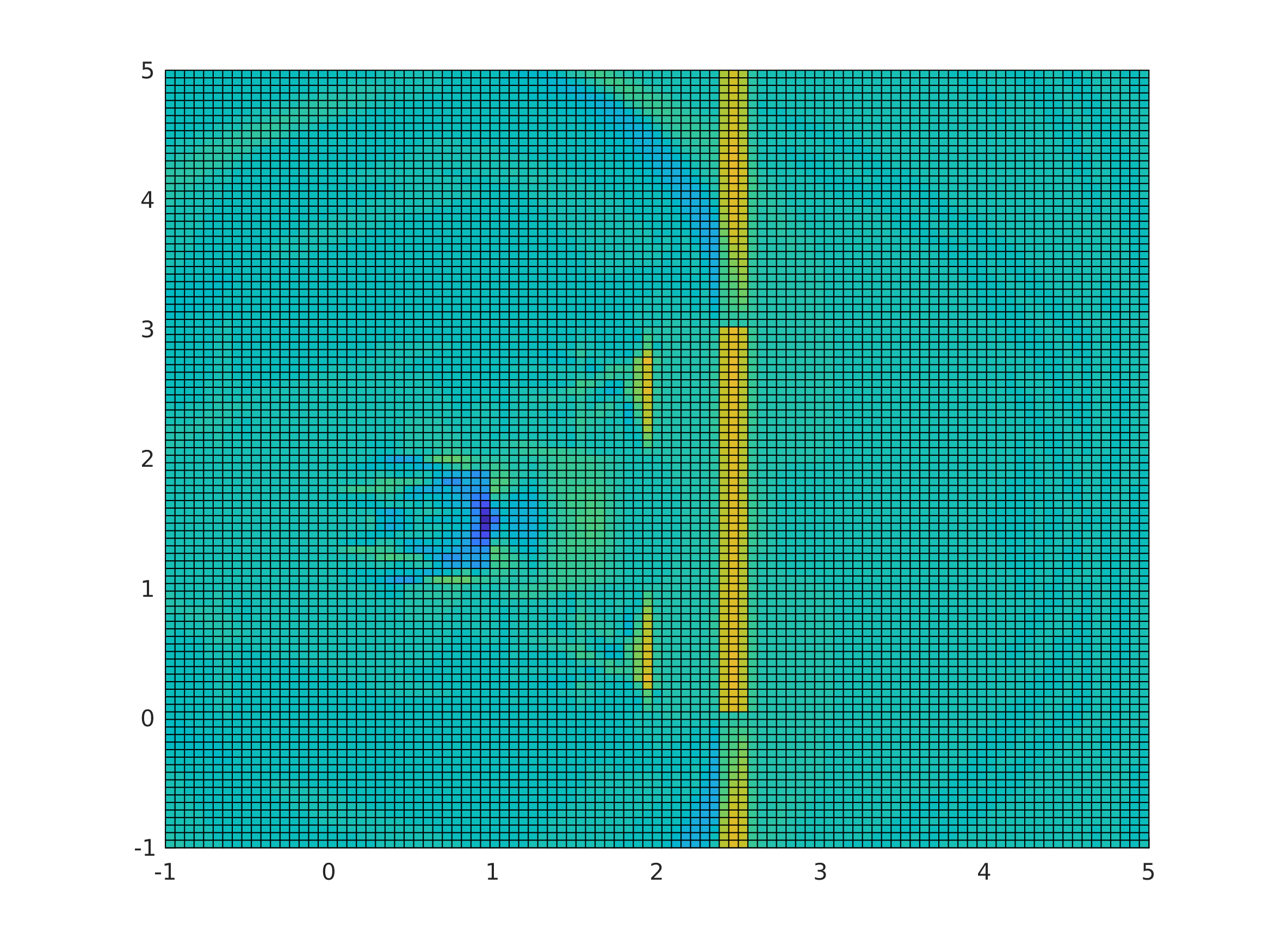}
     \includegraphics[width=4cm]{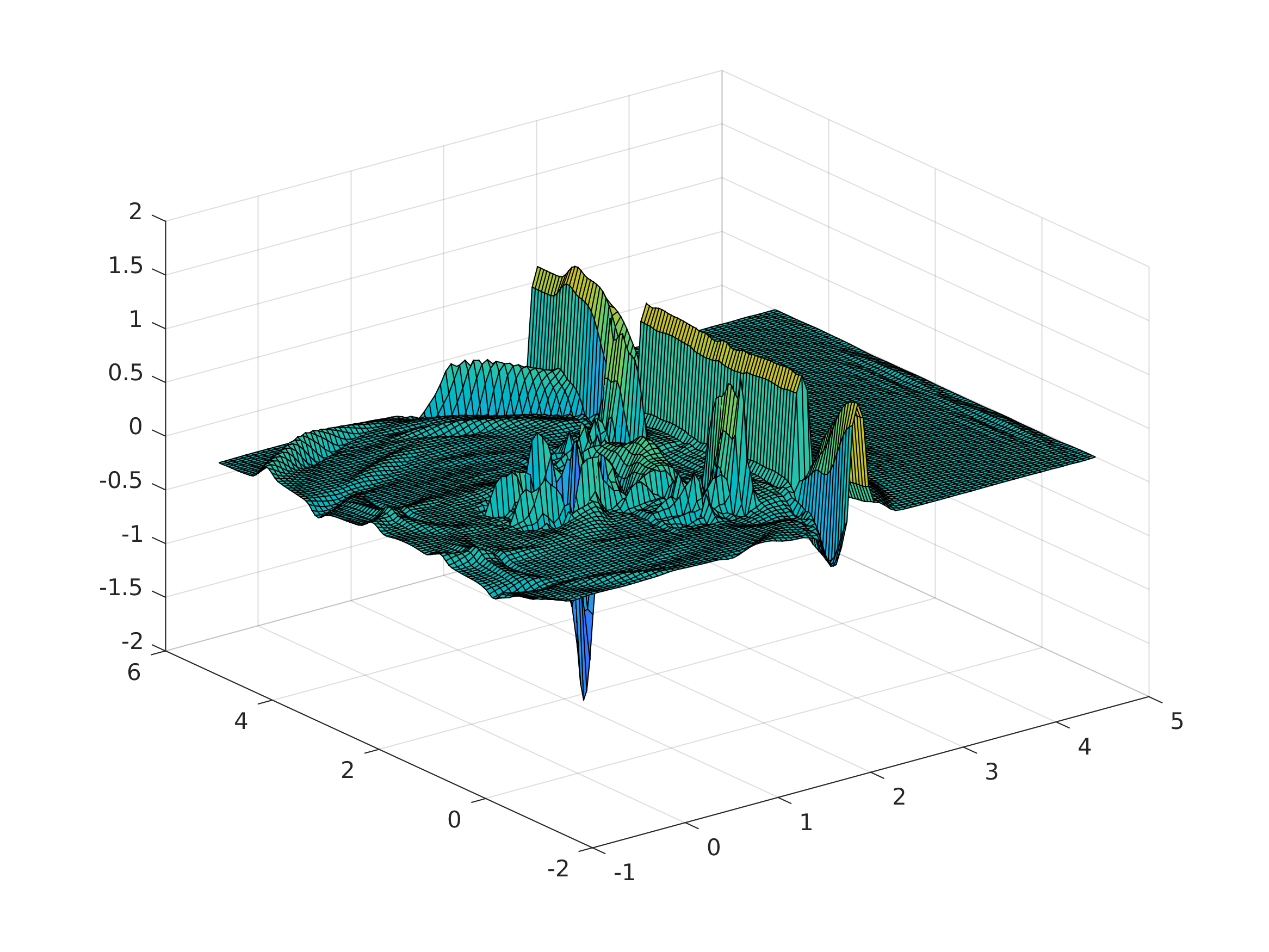}
   }
   \subcaptionbox{The incident wave is past the obstacle}[0.245\textwidth]{
     \includegraphics[width=4cm]{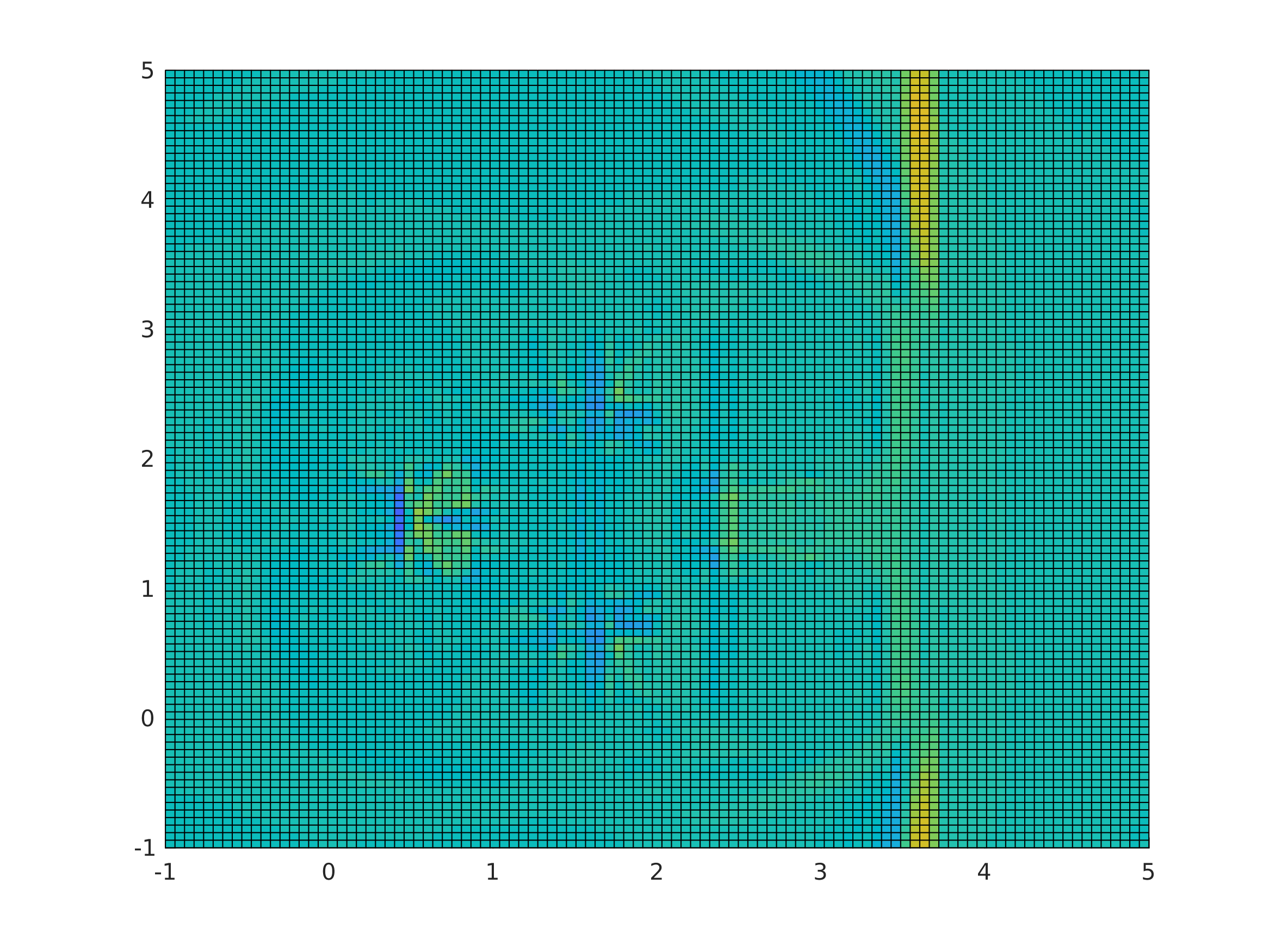}
     \includegraphics[width=4cm]{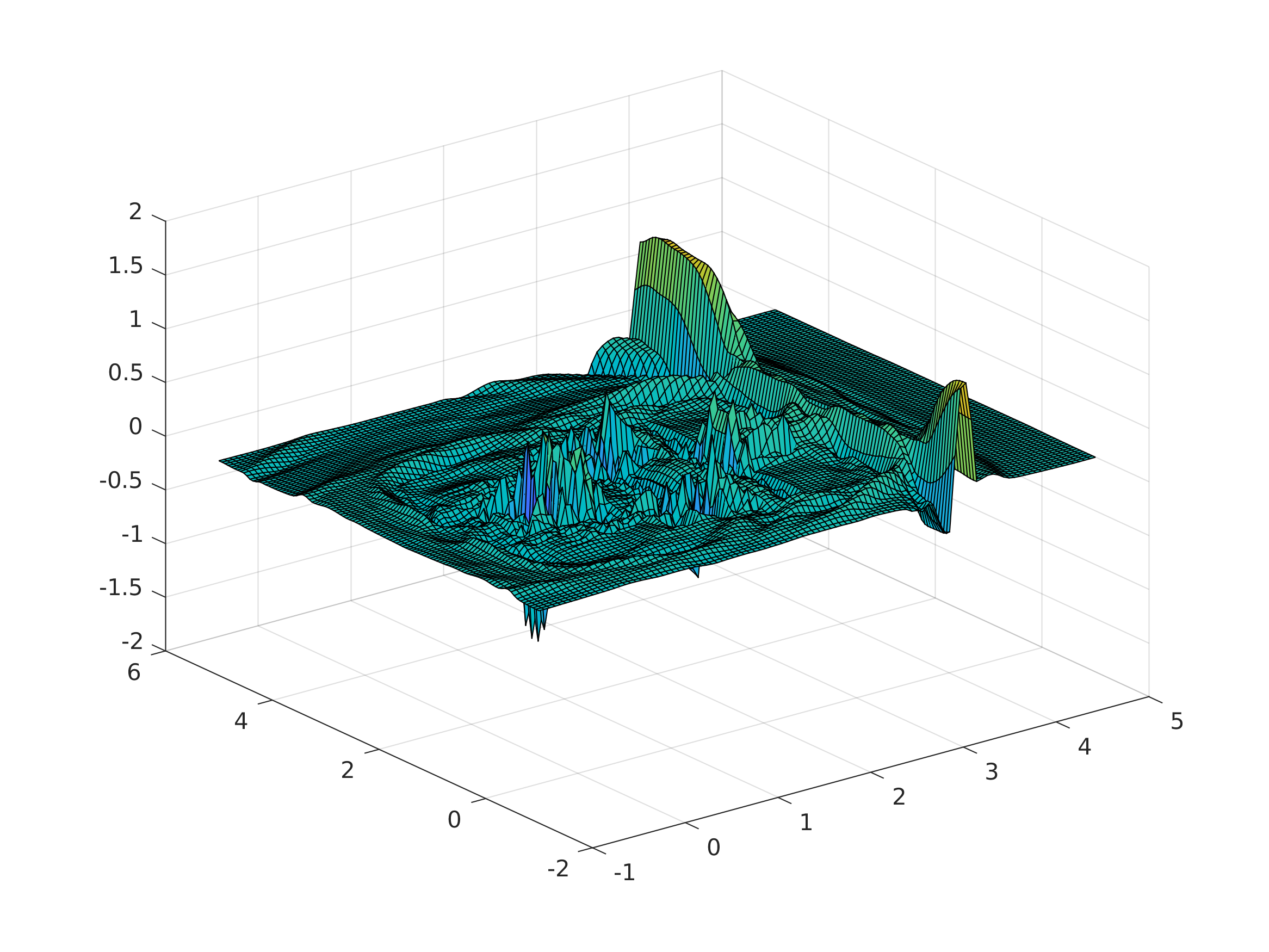}
   }
 \end{figure}

 \begin{figure}
   \centering
   \begin{subfigure}{0.49\textwidth}
   \includeTikzOrEps{convergence_complicated}
   %\vspace{-2\baselineskip}
   \caption{Example~\ref{ex:complex_geometry}}
   \label{fig:complicated_convergence}
 \end{subfigure}
 \begin{subfigure}{0.49\textwidth}
   \centering
   \includeTikzOrEps{convergence_pointwise}
%   \vspace{-2\baselineskip}
   \caption{Example~\ref{ex:pointwise}}
   \label{fig:pointwise_convergence}
 \end{subfigure}
 \caption{Convergence history of Examples~\ref{ex:complex_geometry} and \ref{ex:pointwise}.}
\end{figure}

 \begin{example}
   \label{ex:pointwise}
   We look at the pointwise convergence of our method
   as analyzed in Theorem~\ref{thm:local_and_pointwise_convergence}.
   We consider the $2\times2$ checkers grid with wave  numbers $[1,2,0.5,0.5,2]$. The incident wave   
   is given by a slightly smoother version of Example~\ref{ex:complex_geometry}, namely,
    $
    u^{\textrm{inc}}(x,t):=H(x \cdot d -t)
    $
    with $d:=[1,0]^{\top}$ and
   \begin{align*}
     H(x)&:=\sin(t) \varphi(t) \varphi(t-\pi/2), \qquad \text{using} \qquad
    \varphi(x):=\frac{e^{-1/t}}{e^{-\frac{1}{t}}+e^{-\frac{1}{1-t}}} 
   \end{align*}
   for $x \in (0,1)$ and $\varphi(x):=0$ for $x\leq 0$ and $\varphi(x):=1$ for $x\geq 1$.
   We place observation points $x_j$ in the center of each of the squares. We compute the
   difference $|\dot{u}^{h}(x_j,t_n)-\partial_k u^{h,k}(x_j,t_n)|$ and take the maximum over all
   points and timesteps. Since no exact solution was available, we used the numerical
   solution obtained by halving the finest timestep size to obtain an estimate of the
   true error. We compare a 2-stage and a 3-stage Radau IIa method of classical
   orders $3$ and $5$ respectively. We used a fixed mesh of size $h=0.125$ and
   polynomials of order $4$ and $5$ for the discretization spaces.
   
   In Figure~\ref{fig:pointwise_convergence}, we observe that the error goes to zero very rapidly.
   The rates observed are consistent with the full classical orders 
predicted by Theorem~\ref{thm:local_and_pointwise_convergence}, although
   there might still be some preasymptotic effect polluting the $3$-stage computation resulting in
   convergence rates slightly below 5.    \eremk
 \end{example}

 % \clearpage
%\afterpage{\clearpage}
\textbf{Acknowledgments:} Financial support by the Austrian Science Fund (FWF) through the 
projects P29197-N32, P33477, W1245 and SFB65~(A.R.)
and project P28367-N35 (J.M.M). FJS is partially supported by NSF-DMS grant 1818867. Part of this work was developed while FJS was a Visiting Professor at the TUW.
\bibliographystyle{alphaabbr}
\bibliography{literature}

\appendix
\section{Local higher order convergence away from the boundary}
\label{sect:local_higher_order}
In this section, we show that Runge-Kutta approximations achieve the full
classical order as long as we stay away from the boundary of the domain. 

We will work in the abstract setting of~\cite{semigroups}. Briefly summarized,
we are given an operator $\AAstar$ on  a Hilbert space $\HH$ and
a second operator $\BB: \dom(\AAstar) \to \mathbb{M}$ such that
$\AA:=\AAstar|_{\ker(\BB)}$ is the generator of a $C_0$-semigroup on $\HH$.
Since we will only work with integer-order norms, we only need
one  Sobolev tower. For $\mu \in \N$ and $u \in \dom(\AAstar^\mu)$ we define the norms
\begin{align*}
  \norm{u}_{\HH_\star^{\mu}}:=\sum_{j=0}^{\mu}{\norm{\AA_\star^{j} u}_{\HH}}.
\end{align*}
We will write $\norm{u}_{\HH^{\mu}}$ for the same norm if we want to emphasize that
$u \in \dom(\AA^\mu)$, i.e., it satisfies the additional side constraints.

The main role will be played by operators correcting the issue of boundary conditions
(or more general side constraints) in the definition of $\AA$ compared to $\AAstar$.
\begin{definition}
  \label{def:cutoff_operators}
  We call an operator $\opT: \HH \to \HH$
  an \emph{admissible cutoff operator} of order $M$,
  if the following holds:
  \begin{enumerate}[(i)]
  \item There exists a constant $n(\opT,M)$ such that
    for $u \in \dom(\AAstar^{n(\opT,M)})$
    \begin{align}
      \label{eq:def_cutoff_operators}
      \opT u  \in \dom(\AA^{M})\, \qquad \text{with}\,\qquad
      \norm{\opT u}_{\HH^{M}}\lesssim \norm{u}_{\HH_{\star}^{n(\opT,M)}}.
    \end{align}
  \item  The commutator $[\opT,\AAstar]:=\opT \AAstar  - \AAstar \opT$ is of lower order than $\AAstar$, and similarly
    for its iterated versions. Namely, using the notation
    \begin{align}
      \label{eq:iterated_commutators}      
      \opC_0:=\opT \qquad \text{and} \qquad \opC_{\ell+1}:=\AA^{-1} [\opC_\ell,\AAstar],
    \end{align}
    we assume that for all $0\leq m$
    there exists $L(\opT,m)>0$ such that $\AA^m \opC_{L(\opT,m)}$ is a bounded  operator
    mapping $\HH \to \HH$.  We write $L(\opT):=L(\opT,0)$.
  \end{enumerate}
\end{definition}

Due to the involvement of $\AA^{-1}$ in the definition of $\opC_{\ell}$, the action of the
commutators is non-trivial to understand. To make it easier to show that
an operator is an admissible cutoff operator, we have the following more straight-forward
representation:
\begin{lemma}
  \label{lemma:iterated_commutators_2}
  Let $\opT: \HH \to \HH$ be a bounded operator. Define a second sequence of
  iterated commutators by
  \begin{align}
    \label{eq:iterated_commutators2}      
    \widetilde{\opC}_0:=\opT \qquad \text{and} \qquad \widetilde{\opC}_{\ell+1}:=[\widetilde{\opC}_\ell,\AAstar].
  \end{align}

  Assume that for $\ell \in \N_0$,
  $\widetilde{\opC}_{j}u \in \dom(\AA)$ for all $j \leq \ell$. Then, the two sequences are related by
  \begin{align}
    \label{eq:iterated_commutators_relations}      
    {\opC_{\ell}} u  &=\AA^{-\ell} \widetilde{\opC}_{\ell} \, u \qquad \forall \ell \in \N_0.
  \end{align}  
\end{lemma}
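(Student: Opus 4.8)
The plan is to prove~\eqref{eq:iterated_commutators_relations} by induction on $\ell$. The base case $\ell=0$ is immediate: by definition $\opC_0 u = \opT u = \widetilde{\opC}_0 u = \AA^{0}\widetilde{\opC}_0 u$. So assume that $\opC_\ell u = \AA^{-\ell}\widetilde{\opC}_\ell u$ holds for some $\ell\ge 0$ (and that $u$ is regular enough — concretely $u\in\dom(\AAstar^{\ell+1})$ and $\widetilde{\opC}_j u\in\dom(\AA)$ for $j\le\ell$ — so that every expression below is defined); I want to deduce the identity for $\ell+1$.

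First I would expand, using the recursion~\eqref{eq:iterated_commutators} and the induction hypothesis,
\[
  \opC_{\ell+1} u
  = \AA^{-1}\bigl(\opC_\ell \AAstar u - \AAstar \opC_\ell u\bigr)
  = \AA^{-1}\bigl(\AA^{-\ell}\widetilde{\opC}_\ell \AAstar u - \AAstar \AA^{-\ell}\widetilde{\opC}_\ell u\bigr).
\]
For the second term I use that $\AAstar$ restricts to $\AA$ on $\dom(\AA)$ together with $\AA\AA^{-1}=I_{\HH}$, which gives $\AAstar \AA^{-\ell}\widetilde{\opC}_\ell u = \AA^{-(\ell-1)}\widetilde{\opC}_\ell u$. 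For the first term I rewrite $\widetilde{\opC}_\ell \AAstar u$ by means of the recursion~\eqref{eq:iterated_commutators2}, i.e.\ $\widetilde{\opC}_\ell \AAstar u = \widetilde{\opC}_{\ell+1}u + \AAstar\widetilde{\opC}_\ell u$, and then invoke the hypothesis $\widetilde{\opC}_\ell u\in\dom(\AA)$ to turn $\AAstar\widetilde{\opC}_\ell u$ into $\AA\widetilde{\opC}_\ell u$, so that $\AA^{-\ell}\AAstar\widetilde{\opC}_\ell u = \AA^{-(\ell-1)}\widetilde{\opC}_\ell u$. Substituting both, the two $\AA^{-(\ell-1)}\widetilde{\opC}_\ell u$ contributions cancel and I am left with
\[
  \opC_{\ell+1} u = \AA^{-1}\AA^{-\ell}\widetilde{\opC}_{\ell+1}u = \AA^{-(\ell+1)}\widetilde{\opC}_{\ell+1}u,
\]
which closes the induction.

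I expect the only genuine difficulty to be the domain bookkeeping underlying these manipulations: at each step one must check that the vectors to which $\AAstar$, $\AA^{-1}$ and the operators $\widetilde{\opC}_j$ are applied lie in the appropriate domains, so that the identities $\AA\AA^{-1}=I_{\HH}$, $\AA^{-1}\AA=I_{\dom(\AA)}$ and $\AAstar|_{\dom(\AA)}=\AA$ may legitimately be used. This is precisely where the standing hypothesis ``$\widetilde{\opC}_j u\in\dom(\AA)$ for $j\le\ell$'' enters — it is exactly what is needed to pull $\AAstar$ through the negative powers of $\AA$ — and, together with $u\in\dom(\AAstar^{\ell+1})$, it also guarantees that $\widetilde{\opC}_\ell \AAstar u$ is meaningful. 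Once this is in place, the algebraic cancellation above carries the proof with no further analytic input.
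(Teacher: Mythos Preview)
Your proof is correct and follows essentially the same approach as the paper: both argue by induction on $\ell$, expand $\opC_{\ell+1}$ via its recursive definition, insert the induction hypothesis $\opC_\ell = \AA^{-\ell}\widetilde{\opC}_\ell$, and use the hypothesis $\widetilde{\opC}_\ell u\in\dom(\AA)$ to commute $\AAstar$ past $\AA^{-\ell}$. The only cosmetic difference is that the paper factors out $\AA^{-\ell}$ directly to obtain $\AA^{-\ell}(\widetilde{\opC}_\ell\AAstar-\AAstar\widetilde{\opC}_\ell)=\AA^{-\ell}\widetilde{\opC}_{\ell+1}$, whereas you expand the first term via $\widetilde{\opC}_\ell\AAstar u=\widetilde{\opC}_{\ell+1}u+\AAstar\widetilde{\opC}_\ell u$ and then observe a cancellation; the content is identical.
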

\begin{proof}
  The base case $\ell=0$ is clear. Assume that that \eqref{eq:iterated_commutators_relations} holds
  for a fixed $\ell$. Then we calculate,
  using the fact that $\widetilde{\opC}_{\ell} u \in \dom(A)$
  and thus $\AA  \AA^{-1} \widetilde{\opC}_{\ell} = \AA^{-1} \AA\widetilde{\opC}_{\ell}$:
  \begin{align*}
    \opC_{\ell+1}
    &=\AA^{-1} [\opC_{\ell},\AAstar]
      =\AA^{-1} \big( \opC_{\ell}\AAstar - \AAstar\opC_{\ell} \big)
      =\AA^{-1} \big(
      \AA^{-\ell} \widetilde{\opC}_{\ell} \AAstar -
      \AA \AA^{-\ell} \widetilde{\opC}_{\ell}
      \big) \\
    &=\AA^{-\ell - 1}\big( \widetilde{\opC}_{\ell} \AA - \AAstar\widetilde{\opC}_{\ell} \big)
      =\AA^{-\ell - 1} [\widetilde{\opC}_{\ell},\AAstar]
      =\AA^{-\ell - 1} \widetilde{\opC}_{\ell+1}. \qedhere
  \end{align*}
\end{proof}

\begin{remark}
  We note that our definition not only allows for ``classical'' multiplication operators
  with a cutoff function, but also for operators involving powers of $\AA$, as
  well as commutators $[\TT,\AAstar]:=\TT \AAstar-\AAstar \TT$. This will be crucial in the later proof.
  
  Similarly, the theory not only covers a multiplication cutoff-operator
  for the operator $\AAstar$ as introduced in Section~\ref{sect:analyzable_form}
  (with $L(\TT,m)=m$ as the commutators are already bounded), 
  but it also applies to, for example, the case $\AAstar=-\laplace$ (with $L(\TT,m)=2m$).
\end{remark}

  Before we begin, we recall the definition $\breve{\opT}:=\operatorname{diag}(\opT,\dots,\opT)$,
    which applies $\opT$ to each stage of the RK-method. We also use the notation of
    Kronecker products, i.e., 
    for a matrix $\mathcal S\in \mathbb R^{m\times m}$ and an operator $C:\mathcal Y\to \mathcal Z$ we write 
    \[
      \mathcal S\otimes C:=
      \left[\begin{array}{ccc} 
              \mathcal S_{11} C & \cdots & \mathcal S_{1m} C \\
              \vdots & & \vdots \\
              \mathcal S_{m1} C & \cdots & \mathcal S_{mm} C
            \end{array}\right] : \mathcal Y^m \to \mathcal Z^m.
        \]
    We will use the vector $\ones:=(1,\dots,1)^{\top}$ as an operator in the sense
    of $\ones u := (u,\dots,u)^{\top}$.

  \begin{lemma}
    Let $w \in \mathcal{C}^1(\R,\HH)\cap \mathcal{C}^0(\R,\dom(\AAstar))$ with $w(t)=0$ for $t\leq 0$.
    We define the continuous RK-error as
    $$
    e_k(w,t):=\sum_{j=0}^{\infty}
    {r(k \AA)^{j}
      \big( v(t-(j-1)k) +
      k\rkb^{\top} \otimes \AA (\id-k\rkA\otimes \AA)^{-1} V(t-jk)\big)}
    $$
    with the consistency error functions
    \begin{subequations}
      \label{eq:def_consitstency_error_functions}
    \begin{align}
      v(t)&:=u(t)- u(t-k) - k \rkb^{\top} \dot{u}(t-k+k\rkc) \qquad\text{and}\\
      V(t)&:=u(t+k\rkc)-u(t)\ones -k \rkA\dot{u}(t+k\rkc).
    \end{align}
    \end{subequations}

    Then, the following identity holds for all $0<t_1<t_2$:
    \begin{align}
    e_k(w,t_1)-e_k(w,t_2)&=
      \int_{t_1}^{t_2}{\dot{e}_k(w,\tau)\,d\tau}
      =\int_{t_1}^{t_2}{e_k(\dot{w},\tau)\,d\tau}.
    \label{eq:telescoping_rk}
    \end{align}

    If $u(t)$  denotes the exact solution to $\dot{u}=\AAstar u$
    and $(u^n)_{n\in \N}$, $(U^n)_{n\in \N}$  is the sequence of RK-approximations
    and stage vectors,
    such that $u(t_n +k \rkc)-U^n \in [\dom(A)]^m$. Then the error can be written as
    \begin{align*}
      u(t_n)-u^n=e_k(u,t_n).
    \end{align*}
  \end{lemma}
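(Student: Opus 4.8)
The lemma has two parts: the telescoping identity \eqref{eq:telescoping_rk} and the identification of the RK error with $e_k(u,t_n)$. The plan is to treat each separately, with the first part being essentially a formal manipulation of the defining series and the second a careful unwinding of the RK recursion.

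For the telescoping identity, the first step is to observe that the map $w \mapsto e_k(w,t)$ is linear in $w$ (through the consistency functions $v$ and $V$, which are linear in $u$) and that differentiation in $t$ commutes with the infinite sum and with all the fixed operators $r(k\AA)^j$, $\AA(\id-k\rkA\otimes\AA)^{-1}$, etc., appearing in the definition. The key point is that $\dot v(t)$ is exactly the consistency function built from $\dot u$ in place of $u$ — i.e. $\dot v(t) = \dot u(t) - \dot u(t-k) - k\rkb^\top \ddot u(t-k+k\rkc)$ — and similarly $\dot V(t)$ is the stage consistency function for $\dot u$. Hence $\dot e_k(w,\tau) = e_k(\dot w,\tau)$ follows term by term, and the identity $e_k(w,t_1)-e_k(w,t_2) = \int_{t_1}^{t_2}\dot e_k(w,\tau)\,d\tau$ is just the fundamental theorem of calculus for the Bochner integral, valid once one checks the integrand is continuous in $\HH$. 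The regularity hypotheses $w\in\mathcal C^1(\R,\HH)\cap\mathcal C^0(\R,\dom(\AAstar))$ together with the causality $w(t)=0$ for $t\le 0$ (which makes the series locally finite for each fixed $t$) are exactly what is needed to justify these interchanges; I would note that for fixed $t$ only finitely many terms are nonzero, so no delicate convergence argument is required.

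For the second part, I would start from the RK recursion in the form \eqref{eq:4.5_rk_form} specialized to $\dot x = \AAstar x$, i.e.\ $U^n = \ones u^n + k\rkA\otimes\AAstar\, U^n$ and $u^{n+1} = r(\infty)u^n + \rkb^\top\rkA^{-1}U^n$, and rewrite the stage equation as $U^n = (\id - k\rkA\otimes\AAstar)^{-1}\ones u^n$; here one uses the side-constraint hypothesis $u(t_n+k\rkc)-U^n\in[\dom(\AA)]^m$ to replace $\AAstar$ by $\AA$ on the relevant differences so that $r(k\AA)$ and $(\id-k\rkA\otimes\AA)^{-1}$ make sense. Substituting into the update gives $u^{n+1} = r(k\AA)u^n$ modulo the action on the exact-solution part, and the standard Mallo--Palencia-type computation (see \cite{mallo_palencia_optimal_orders_rk}) shows that the defect of the exact solution under one step of this recursion is precisely the pair of consistency functions $v$ and $V$ from \eqref{eq:def_consitstency_error_functions}. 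One then solves the resulting linear recursion for the error $u(t_n)-u^n$ by unrolling it $n$ times: since $u$ vanishes for negative times the lower limit of summation can be pushed to $-\infty$, and what comes out is exactly the series defining $e_k(u,t_n)$. The telescoping identity from the first part is what guarantees this rearranged infinite sum is well-behaved and matches the stated closed form.

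\textbf{Main obstacle.} The routine-but-delicate point is the bookkeeping in the second part: correctly tracking which quantity $r(k\AA)$ versus $r(k\AAstar)$ acts on, and verifying that the side-constraint assumption $u(t_n+k\rkc)-U^n\in[\dom(\AA)]^m$ propagates through the recursion so that every term in the series legitimately lies in the domain where $\AA$ (rather than only $\AAstar$) and the stability operators are defined. This is the place where the abstract boundary-condition framework of \cite{semigroups} does real work, and I would be careful to invoke it explicitly rather than gloss over the $\AA$-vs-$\AAstar$ distinction. The actual consistency-defect computation identifying $v$ and $V$ is standard and I would cite \cite{mallo_palencia_optimal_orders_rk} rather than reproduce it.
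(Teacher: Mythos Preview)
Your proposal is correct and follows essentially the same approach as the paper: the paper's proof consists of exactly two sentences, citing \cite{mallo_palencia_optimal_orders_rk} for the identification of $e_k$ with the Runge--Kutta error and invoking ``the definition of $e_k$, the linearity of all the operators involved and the fundamental theorem of calculus'' for \eqref{eq:telescoping_rk}. Your outline simply makes explicit the details the paper omits---in particular the local finiteness of the series and the $\AA$-vs-$\AAstar$ bookkeeping---so there is nothing substantively different to compare.
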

  \begin{proof}
    The fact that $e_k$ corresponds to the Runge-Kutta error
    follows from the proof of \cite[Theorem 1]{mallo_palencia_optimal_orders_rk}.  
    \eqref{eq:telescoping_rk} follows from the definition of $e_k$, the linearity
    of all the operators involved and the fundamental theorem of calculus.
  \end{proof}
\begin{lemma}
  \label{lemma:commutators}
  Let $\Rbar:=\big(\id - k \rkA \otimes \AA\big)^{-1}$ and $\opT$ an admissible cutoff operator.
  Define the commutator $[\Rbar,\breve{\opT}]:=\Rbar\breve{\opT} - \breve{\opT} \Rbar$. Then the following identity holds:
  \begin{align*}
    [\Rbar,\breve{\opT}]&=k\Rbar\big(\rkA \otimes [ {\AA}, \breve{\opT}] \big) \Rbar.
  \end{align*}
  We can iterate this identity using the iterated commutators $\opC_j$, as defined in
  \eqref{eq:iterated_commutators}.  For $L \in \N$
  the following expression is valid:
%  \begin{align*}
%    [\Rbar,\opT]&=\sum_{j=1}^{L-1}{k^{j} \Rbar^{j+1} \rkA^{j} \opC_j} +k^{L}\Rbar^{L} \rkA^{L}  \opC_L \Rbar .
%  \end{align*}
%  An alternative representation is
%  \todo{note: $\opC_\ell$ contains $A^{-\ell}$}
  \begin{align}
    \label{eq:iteratec_commtuators_regularity_version}
    [\Rbar,\breve{\opT}]&=\sum_{j=1}^{L}{\Rbar \big(\Rbar-\id\big)^j \opC_{j}}
                  +  (\Rbar-\id)^{L+1} \opC_{L+1} \Rbar.
  \end{align}
  
\end{lemma}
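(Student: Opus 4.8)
The plan is to establish the base identity by a direct algebraic manipulation and then promote it to the iterated form by induction on $L$, using the definition of the iterated commutators $\opC_j$ together with a ``resolvent identity'' that converts $k\rkA\otimes\AA$ into $\Rbar-\id$. For the base case, I would start from the elementary commutator identity for inverses: for invertible operators $P$ and a bounded $\breve\opT$, one has $P^{-1}\breve\opT - \breve\opT P^{-1} = P^{-1}(\breve\opT P - P\breve\opT)P^{-1} = -P^{-1}[P,\breve\opT]P^{-1}$. Applying this with $P:=\id - k\rkA\otimes\AA$ and noting that $[P,\breve\opT] = -k[\rkA\otimes\AA,\breve\opT] = -k\,\rkA\otimes[\AA,\breve\opT]$ (the matrix part $\rkA$ commutes with the scalar action of $\breve\opT$ since $\breve\opT=I_{m\times m}\otimes\opT$), I get exactly
\[
[\Rbar,\breve\opT] = \Rbar\big(k\,\rkA\otimes[\AA,\breve\opT]\big)\Rbar .
\]
The only subtlety here is domain bookkeeping: one must check that $\breve\opT$ maps into $\dom(\breve\AA)$ where needed, which is guaranteed by the admissibility of $\opT$ (property (i) of Definition~\ref{def:cutoff_operators}), so that $[\AA,\breve\opT]$ makes sense on the relevant domain and the composition $\Rbar\,\rkA\otimes[\AA,\breve\opT]\,\Rbar$ is well-defined on $\HH^m$.

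For the iteration, the key observation is that $k\,\rkA\otimes\AA = -(P - \id) = \id - \Rbar^{-1}$, hence $k\,\Rbar(\rkA\otimes\AA) = \Rbar - \id$. Rewriting the base identity as
\[
[\Rbar,\breve\opT] = \big(\Rbar - \id\big)\,\breve{\AA^{-1}[\AA,\breve\opT]}\,\Rbar = (\Rbar-\id)\,\breve{\opC_1}\,\Rbar,
\]
where I used $\opC_1 = \AA^{-1}[\opT,\AAstar] = -\AA^{-1}[\AA,\opT]$ (up to the sign convention fixed in \eqref{eq:iterated_commutators}; I would reconcile signs carefully against the paper's definition, since the displayed target uses $\opC_j$ with the sign as in \eqref{eq:iterated_commutators}). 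Now I iterate: inside $(\Rbar-\id)\,\breve{\opC_1}\,\Rbar$, commute $\breve{\opC_1}$ past the remaining $\Rbar$ using the \emph{same} base identity applied to the cutoff operator $\opC_1$ in place of $\opT$ — this is legitimate because admissibility of $\opT$ is precisely designed so that all the $\opC_j$ remain admissible (bounded, with the right domain-mapping properties). This produces a term with $\opC_2$ and an error term carrying one more factor of $(\Rbar-\id)$. Carrying this out $L$ times yields
\[
[\Rbar,\breve\opT] = \sum_{j=1}^{L}\Rbar(\Rbar-\id)^j\,\opC_j + (\Rbar-\id)^{L+1}\opC_{L+1}\Rbar,
\]
which I would prove cleanly as a statement ``$P(L)$'' by induction: $P(1)$ is the base identity rearranged, and the inductive step substitutes $[\Rbar,\breve{\opC_L}] = (\Rbar-\id)\,\breve{\opC_{L+1}}\,\Rbar$ into the remainder term of $P(L)$, matching the $\Rbar$ versus $\Rbar-\id$ prefactors by a short telescoping computation.

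I expect the main obstacle to be the careful tracking of domains and of the left/right placement of $\Rbar$ versus $(\Rbar-\id)$ through the iteration — in particular ensuring that each time one commutes $\breve{\opC_j}$ past a resolvent one is entitled to apply the base identity (which requires $\opC_j$ to be an admissible cutoff operator, not merely bounded) and that the powers $(\Rbar-\id)^j$ accumulate in the stated position rather than interleaved with stray $\Rbar$'s. A secondary, purely bookkeeping, nuisance is the sign convention: the paper defines $\opC_{\ell+1} = \AA^{-1}[\opC_\ell,\AAstar]$, so I must be consistent about whether $[\AA,\breve\opT]$ or $[\breve\opT,\AA]$ appears, and absorb the sign into the definition so the final formula matches \eqref{eq:iteratec_commtuators_regularity_version} verbatim. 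Everything else is routine operator algebra, and the finiteness/boundedness of the iterated terms (needed only if one wants to pass $L\to\infty$, which this lemma does not) is exactly what Definition~\ref{def:cutoff_operators}(ii) supplies via $L(\opT,m)$.
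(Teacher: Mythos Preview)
Your proposal is correct and follows essentially the same route as the paper: the base identity is obtained from the standard inverse--commutator formula $[P^{-1},\breve\opT]=-P^{-1}[P,\breve\opT]P^{-1}$ with $P=\id-k\rkA\otimes\AA$, and the iterated version is derived by repeatedly writing $\breve{\opC}_j\Rbar=\Rbar\breve{\opC}_j-[\Rbar,\breve{\opC}_j]$ and reapplying the base identity to the new commutator, using $k\Rbar(\rkA\otimes\AA)=\Rbar-\id$ to produce the factors $(\Rbar-\id)^j$. The sign and domain bookkeeping you flag is exactly the right thing to watch, and the paper's proof handles these points in the same informal way you anticipate.
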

\begin{proof}
  For $U \in [\dom(\AA)]^m$ we calculate:
  \begin{align*}
    (\id - k \rkA \otimes \AA) \breve{\opT} U
    &= \breve{\opT}(\id - k \rkA \otimes \AA)  U    - [k \rkA\otimes \AA, \breve{\opT}] U.
  \end{align*}
  We apply $\Rbar$ to both sides of the  equation to get:
  \begin{align*}
    \breve{\opT} U
    &= \Rbar \breve{\opT}(\id - k \rkA \otimes \AA)  U    - \Rbar [k \rkA\otimes \AA, \breve{\opT}] U.
  \end{align*}
  Choosing $U:=\Rbar V$ for arbitrary $V \in \HH^m$ then gives the stated result:
    \begin{align*}
      \breve{\opT} \Rbar V
    &= R_k \breve{\opT} V    - \Rbar [k \rkA\otimes \AA, \breve{\opT}] \Rbar V.
    \end{align*}

    To see \eqref{eq:iteratec_commtuators_regularity_version},
    we  use the identity $k\rkA\otimes \AA \Rbar U=\Rbar U - U$.
    This gives
      \begin{align*}
        [\Rbar,\breve{\opT}]
        &=(\Rbar - \id) \breve{\opC}_1 \Rbar 
          = (\Rbar - \id) \Rbar \breve{\opC}_1  +(\Rbar - \id) [\Rbar,\breve{\opC}_1] \\
        &=(\Rbar - \id) \Rbar \breve{\opC}_1  +(\Rbar - \id)  \Rbar 
          \big(k\rkA \otimes
          [\breve{\AA},\breve{\opC}_1] \big) \Rbar \\
        &=(\Rbar - \id) \Rbar \breve{\opC}_1  +(\Rbar - \id)^2 \breve{\opC}_2 \Rbar
      \end{align*}
      We observe that we again have an operator $\Rbar$ at the end of the right hand side.
      Shifting it to the left of $\opC_2$ we can repeat the previous argument. Iterating
      this procedure  we get the stated result.
      
  \end{proof}
  
  Next we need to analyze some rational functions related to the RK-method. 
  \begin{lemma}
    \label{lemma:rk_rational_function_bound}
    Given parameters $0\leq \ell \leq p$ , $0\leq \beta<p$, and  $0\leq j \leq p$,
    define the rational functions:
    \begin{align*}
      r_{\ell,\beta,j}:=z \rkb^{\top} (\id - z \rkA)^{-j} \rkA^{\beta}(\rkc^{\ell} - \ell \rkA \rkc^{\ell-1}).     
    \end{align*}

    Then $r_{\ell,\beta,j}=\bigO(z^{p+1-\ell-\beta})$ as $z\to 0$. The implied constant depends on
    the Runge-Kutta method.
  \end{lemma}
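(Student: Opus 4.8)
The plan is to expand $r_{\ell,\beta,j}$ into a Maclaurin series in $z$ around $z=0$ and to verify that every coefficient of order at most $p-\ell-\beta$ vanishes as a consequence of the classical order conditions of the Runge--Kutta method. The only non-elementary input is an algebraic reformulation of Butcher's order conditions: for a method of classical order $p$ satisfying the standard consistency relation $\rkc=\rkA\ones$,
\begin{equation*}
  \rkb^{\top}\rkA^{i}\rkc^{n}=\frac{n!}{(i+n+1)!}\qquad\text{whenever }i+n+1\le p.
\end{equation*}
This is exactly the order condition attached to the ``caterpillar'' rooted tree formed by a chain of $i$ edges issuing from the root whose terminal vertex carries $n$ leaves; that tree has $i+n+1$ vertices and density $(n+1)(n+2)\cdots(n+i+1)=(i+n+1)!/n!$. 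I would justify it by the B-series description of the one-step map (or quote the classical theory of order conditions); note that only $\rkc=\rkA\ones$ and classical order $p$ are used here, not the stage order.

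Concretely, I would then insert the negative-binomial expansion $(\id-z\rkA)^{-j}=\sum_{m\ge 0}\binom{m+j-1}{j-1}z^{m}\rkA^{m}$, which converges absolutely for $|z|$ below the reciprocal of the spectral radius of $\rkA$ and there defines an analytic function. Since all matrices in sight are powers of $\rkA$ and hence commute, this gives
\begin{equation*}
  r_{\ell,\beta,j}(z)=\sum_{m\ge 0}\binom{m+j-1}{j-1}\,z^{m+1}\,a_m,\qquad
  a_m:=\rkb^{\top}\rkA^{m+\beta}\bigl(\rkc^{\ell}-\ell\,\rkA\rkc^{\ell-1}\bigr).
\end{equation*}
For $\ell\ge 1$ and any $m$ with $m+\beta+\ell+1\le p$, I would apply the caterpillar identity twice --- with $(i,n)=(m+\beta,\ell)$ and with $(i,n)=(m+\beta+1,\ell-1)$, in both cases $i+n+1=m+\beta+\ell+1\le p$ --- to obtain the cancellation
\begin{equation*}
  a_m=\frac{\ell!}{(m+\beta+\ell+1)!}-\ell\cdot\frac{(\ell-1)!}{(m+\beta+\ell+1)!}=0.
\end{equation*}
Hence $a_m=0$ for $m=0,\dots,p-\beta-\ell-1$, so the lowest surviving power of $z$ is $z^{(p-\beta-\ell)+1}=z^{p+1-\ell-\beta}$; as the series represents a function analytic in a neighbourhood of $0$ whose radius depends only on the method, this yields $r_{\ell,\beta,j}(z)=\bigO(z^{p+1-\ell-\beta})$ with a constant depending only on the Runge--Kutta coefficients. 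When $\ell+\beta\ge p$ the asserted bound is already implied by the explicit factor $z$ in the definition of $r_{\ell,\beta,j}$.

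I expect the main obstacle to be the caterpillar order identity itself: one has to be sure that $\rkb^{\top}\rkA^{i}\rkc^{n}$ is genuinely the elementary weight of a single rooted tree of order $i+n+1$ (this is where $\rkc=\rkA\ones$ enters) and that its density equals $(i+n+1)!/n!$, so that classical order $p$ pins down its value. After that, the remaining work --- the negative-binomial bookkeeping, the (legitimate, by absolute convergence on a small disc) interchange of series and matrix products, and the two-term cancellation --- is routine.
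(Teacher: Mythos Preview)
Your proof is correct and follows essentially the same route as the paper: expand $(\id - z\rkA)^{-j}$ as a power series in $z$ and then kill the low-order coefficients via the classical order conditions. The paper simply cites an external reference for the vanishing of $\rkb^{\top}\rkA^{n+\beta}(\rkc^{\ell} - \ell\,\rkA\rkc^{\ell-1})$ when $n+\beta\le p-\ell-1$, whereas you derive that cancellation explicitly from the caterpillar identity $\rkb^{\top}\rkA^{i}\rkc^{n}=n!/(i+n+1)!$, which makes your argument more self-contained.
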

  \begin{proof}
    We expand $(\id - z \rkA)^{-1}$ into its Neumann series and multiply the power series
    $j$-times, collecting the leading terms to get:
    \begin{align*}
      r_{\ell,\beta,j}(z)=
      \sum_{n=0}^{p}{c_n z^{n+1} \rkb^{\top}\rkA^{n+\beta}(\rkc^{\ell} - \ell \rkA \rkc^{\ell-1})}
      + \bigO(z^{p+2})
    \end{align*}
    for some coefficients $c_n$, depending on $j$.
    By the order conditions (see, e.g., \cite[(Eq. 5.1)]{semigroups})
    it holds that
    $$
    \rkb^{\top}\rkA^{n+\beta}(\rkc^{\ell} - \ell \rkA \rkc^{\ell-1}) = 0 \qquad \forall\; 0\leq n+\beta \leq p-\ell -1.
    $$
    Thus, the leading non-vanishing term is of order $\bigO(z^{p+1-\ell -\beta})$.
  \end{proof}

  Next, we show that in a single step, we can achieve full classical order.
  \begin{lemma}
    \label{lemma:cutoff_error_single_step}
      Let $u$ solve $\dot{u}=\AAstar u + f$.
      Fix $t >0$ and let $\widetilde{u}$ be  the
      one-step Runge-Kutta approximation:
      \begin{align*}
        \widetilde{U}&=u(t)\ones + k \rkA\otimes \AAstar \widetilde{U} + k \rkA F(t+k \rkc),
                       \qquad \big( \widetilde{U}-u(t+k\rkc)\big) \in [\dom(\AA)]^m
        \\
        \widetilde{u}&=u(t)+ k \rkb^{\top}\otimes \AAstar \widetilde{U}
                            +  k \rkb^{\top} F(t + k \rkc).
      \end{align*}

    Let $\opT$ be an admissible cutoff operator of order $M\leq p-q$. Then the following estimate holds:
    \begin{align*}
      \norm{\opT(u(t+k)- \widetilde{u})}_{\HH}\lesssim  k^{q+1+M}      
      \sum_{\ell=q+1}^{p+1}{\sum_{j=0}^{M}{\max_{t\leq \tau\leq t+k}{\big\|\opC_{j+1} u^{(\ell)}(\tau)\big\|_{\HH^{L-\ell}}}}}.     
    \end{align*}
  \end{lemma}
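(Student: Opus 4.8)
The plan is to reduce the one-step error to the two consistency errors of~\eqref{eq:def_consitstency_error_functions} acted on by the Runge--Kutta resolvent $\Rbar=(\id-k\rkA\otimes\AA)^{-1}$, then to push the cutoff operator $\opT$ rightward through $\Rbar$ and the powers of $\AA$ it hides, and finally to combine the vanishing orders of the relevant rational functions (Lemma~\ref{lemma:rk_rational_function_bound}) with the admissibility estimates of Definition~\ref{def:cutoff_operators} to read off the power $k^{q+1+M}$.

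\textbf{Step 1 (error representation).} Following the derivation of~\eqref{eq:telescoping_rk} and of \cite[Thm.~1]{mallo_palencia_optimal_orders_rk}, I would insert $\dot u=\AAstar u+f$ into the exact (Taylor) collocation identities for $u(t+k\rkc)$ and $u(t+k)$ and subtract the equations defining $\widetilde U$ and $\widetilde u$. Writing $E:=u(t+k\rkc)-\widetilde U\in[\dom(\AA)]^m$, so that $\AAstar E=\AA E$, this gives $(\id-k\rkA\otimes\AA)E=V(t)$, i.e.\ $E=\Rbar V(t)$, and hence
\[
u(t+k)-\widetilde u \;=\; v(t)\;+\;k\,(\rkb^{\top}\otimes\AA)\,\Rbar\,V(t),
\]
with $v,V$ the consistency errors of~\eqref{eq:def_consitstency_error_functions}. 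The source $f$ drops out because it enters the exact and the discrete relations through the same samples $f(t+k\rkc)$; it survives only through the regularity of $u$, which I assume as high as needed for the expansions below.

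\textbf{Step 2 (Taylor expansion, rational functions).} Expanding $v$ and $V$ by Taylor's formula with integral remainder and using the classical and stage order conditions, one gets $v(t)=\bigO(k^{p+1})$ (the remainder an integral mean of $u^{(p+1)}$ over $[t,t+k]$) and $V(t)=\sum_{\ell=q+1}^{p}k^{\ell}\,\rkv_{\ell}\,u^{(\ell)}(t)+\bigO(k^{p+1})$, where $\rkv_{\ell}:=\tfrac1{\ell!}(\rkc^{\ell}-\ell\rkA\rkc^{\ell-1})$ vanishes for $\ell\le q$ and the $\bigO(k^{p+1})$ term again involves an integral mean of $u^{(p+1)}$. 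Substituting this into Step~1 and using the Riesz--Dunford calculus for the substitution $z\leftrightarrow k\AA$ (legitimate, since only a single step is estimated, so $k$ may be taken small), the $\ell$-th contribution to $u(t+k)-\widetilde u$ becomes $k^{\ell}g_{\ell}(k\AA)u^{(\ell)}(t)$, where $g_{\ell}(z)=\tfrac1{\ell!}\,z\,\rkb^{\top}(\id-z\rkA)^{-1}(\rkc^{\ell}-\ell\rkA\rkc^{\ell-1})$ is precisely $\tfrac1{\ell!}r_{\ell,0,1}$ from Lemma~\ref{lemma:rk_rational_function_bound}; hence $g_{\ell}(z)=z^{\,p+1-\ell}\widetilde g_{\ell}(z)$ with $\widetilde g_{\ell}$ analytic and bounded on a neighbourhood of the relevant spectrum. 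The two $\bigO(k^{p+1})$ remainders and the $\ell=p+1$ piece are handled by the trivial slack $k^{p+1}\le k^{q+1+M}$ (recall $M\le p-q$), once $\opT$ has been applied to restore the needed regularity.

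\textbf{Step 3 (commuting the cutoff).} Apply $\opT$; the term $\opT v(t)$ is immediate by boundedness of $\opT$. For the main contribution $k\,\opT(\rkb^{\top}\otimes\AA)\Rbar V(t)$ I would use $\opT(\rkb^{\top}\otimes\id)=(\rkb^{\top}\otimes\id)\breve\opT$ and the commutation of $\breve\AA$ with $\Rbar$ to rewrite it as $(\rkb^{\top}\otimes\id)\breve\opT\,\Rbar\,(k\breve\AA V(t))$, and then move $\breve\opT$ to the right through $\Rbar$ and its powers via Lemma~\ref{lemma:commutators}: each factor $(\Rbar-\id)=k\rkA\otimes\AA\,\Rbar$ peeled off carries an iterated commutator $\opC_{j}$ from~\eqref{eq:iterated_commutators} together with an extra power of $\AA$. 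Carrying this out to depth $M$ and combining with the $p+1-\ell$ vanishing orders of $g_{\ell}$ expresses, for each $\ell$, the $\ell$-th contribution as a finite sum of terms of the form
\[
k^{\,q+1+M}\;(\text{bounded operator})\;\AA^{\,L-\ell}\,\opC_{j+1}\,u^{(\ell)}(\tau),\qquad 0\le j\le M,
\]
the boundedness of the prefactors coming from part~(ii) of Definition~\ref{def:cutoff_operators} (i.e.\ $\AA^{m}\opC_{L(\opT,m)}$ bounded), and the power $k^{q+1+M}$ arising because $M$ of the $p+1-\ell\ge M$ vanishing orders are spent to buy $\AA$-room while the rest only improve the estimate; the indices $\ell>p+1-M$ fall under the slack $k^{\ell}\le k^{q+1+M}$. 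Taking $\HH$-norms, bounding the integral means by $\max_{t\le\tau\le t+k}$ and absorbing the $L-\ell$ powers of $\AA$ into the $\HH^{L-\ell}$-norm then yields the claimed estimate.

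The main obstacle is Step~3: one must keep track, uniformly in $\ell\in\{q+1,\dots,p+1\}$, of how the depth-$M$ expansion of $[\Rbar,\breve\opT]$ interacts simultaneously with the vanishing order $p+1-\ell$ provided by Lemma~\ref{lemma:rk_rational_function_bound} and with the admissibility budget $\AA^{m}\opC_{L(\opT,m)}\in\mathcal B(\HH)$, so that exactly $M$ additional powers of $k$ — and no more — are recovered. The hypothesis $M\le p-q$ is precisely what keeps the vanishing order $p+1-\ell$ large enough at the critical value $\ell=q+1$ for this budget to suffice; a more naive commutation would either produce unbounded prefactors or lose powers of $k$.
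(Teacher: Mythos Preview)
Your proposal is essentially correct and follows the same route as the paper: derive $E=\Rbar V(t)$ and $e=v(t)+k(\rkb^{\top}\otimes\AA)E$, Taylor-expand $V$, then combine the commutator expansion of Lemma~\ref{lemma:commutators} with the vanishing orders of Lemma~\ref{lemma:rk_rational_function_bound}. The paper organizes Step~3 slightly differently: rather than first pushing $\breve\AA$ through $\Rbar$ and then moving $\breve\opT$ rightward, it immediately splits $\opT e$ as $\opT v(t)+k(\rkb^{\top}\otimes\AA)\Rbar\,\breve\opT V(t)+k\rkb^{\top}\Rbar[\breve\AA,\breve\opT]E$, observes that the first two summands are already $\bigO(k^{p+1})$ because $\opT u\in\dom(\AA^{M})$, and iterates only on the commutator piece $w(t)=k\rkb^{\top}\Rbar[\breve\AA,\breve\opT]\Rbar V(t)$; this has the advantage that $\breve\AA$ never needs to act on $V(t)$ itself (your rewriting $(\rkb^{\top}\otimes\id)\breve\opT\,\Rbar\,(k\breve\AA V(t))$ tacitly assumes $V(t)\in\dom(\AA)$, which is not given---only $E=\Rbar V(t)$ is). The resulting rational functions are $r_{\ell,j,j}$ rather than your $r_{\ell,0,1}$, but the bookkeeping and the final power count $k^{q+1+M}$ are the same.
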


  \begin{proof}
    For simplicity of notation, we only consider the ``full regularity'' case $M=p-q$.
    The general case follows along the same lines but replacing $p$ with $M$ in
    some places.
    
    Setting $E:=u(t+k\rkc)-\widetilde{U}$, solves using the commutator notation
    \begin{align}
      \breve{\opT} E&=k \rkA\otimes \AA \breve{\opT} E + [k \rkA \otimes \AA,\breve{\opT}]E
              + \big(\breve{\opT} u(t + k \rkc) - \opT u(t)\ones - \rkA \otimes \opT\dot{u}(t + k \rkc)\big) \nonumber\\
      &=:k \rkA\otimes \AA \breve{\opT} E + [k \rkA \otimes \AA,\breve{\opT}]E 
        + \breve{\opT} V(t) \nonumber\\
      &=\Rbar \big([k \rkA \otimes \AA,\breve{\opT}]E + \breve{\opT} V(t)\big). \label{eq:opTe_via_Rbar}
    \end{align}
%    The  term $\opT V(t)$ is of the same structure as for standard-estimates. We just note that
%    $\opT u \in \dom(\AA^{\ell})$ for arbitrary $\ell$, thus we get full order for this term.
    We note that, using the operator $\Rbar$ from Lemma~\ref{lemma:commutators}
    \begin{align*}
      E=u(t+k\rkc)-\widetilde{U}&=\Rbar\big(u(t + k \rkc) - u(t)\ones - \rkA \otimes \dot{U}(t + k \rkc)\big)
      =\Rbar V(t).
    \end{align*}

    For the full step, we get for $e:=u(t+k)-\widetilde{u}$
    \begin{align*}
     \opT e&:=\opT u(t+k)-\opT \widetilde{u}\\
      &= k\rkb^{\top}\otimes \AA \breve{\opT} E + k \rkb^{\top}\otimes[\AA,\opT]E
        + \big(\opT u(t+k)- \opT u(t) - k\rkb^{\top} \otimes \opT\dot{u}(t+k \rkc)\big)  \\
             &\stackrel{\mathclap{\eqref{eq:opTe_via_Rbar}}}{=} \;k\rkb^{\top} \otimes \Rbar [ \breve{\AA},\breve{\opT}] E
               + k(\rkb^{\top}\otimes \AA) \Rbar \breve{\opT} V(t)
               + \opT v(t),
    \end{align*}
    where $v(t)$ is defined in \eqref{eq:def_consitstency_error_functions}.
    The last two terms are standard-consistency terms of order $\bigO(k^{p+1})$,
    see, e.g., \cite{mallo_palencia_optimal_orders_rk} for their treatment
    (where we use that $\opT u(t) \in \dom(\AA^L)$).

    We focus on the following expression
    \begin{align*}      
      w(t):=k \rkb^{T}(\id - k \rkA \otimes \AA)^{-1}[\breve{\AA},\breve{\opT}]\Rbar V(t).
    \end{align*}
    We use Lemma~\ref{lemma:commutators} to rewrite this as:
    \begin{align*}
      w(t)
      &= k \rkb^{T}\otimes \Rbar [\breve{\AA},\breve{\opT}] \Rbar V(t) \\
      &=k \rkb^{T}\otimes \Rbar^2 [\breve{\AA},\breve{\opT}]V(t)
        - k\rkb^{\top}\otimes \Rbar^2 [k \rkA \otimes \AA,[\breve{\AA},\breve{\opT}]]\Rbar V(t) \\
      &= \sum_{j=0}^{p-q-1}{ k \rkb^{T}\otimes \AA\Rbar^{j+2} (k^{j} \rkA^{j})\breve{\AA}^{j}\breve{\opC}_{j+1} V(t)}
      \\
      &\qquad \qquad  + k\rkb^{\top} \otimes \AA \Rbar^{p-q+1} (k^{p-q} \rkA^{p-q})\breve{\AA}^{p-q}\breve{\opC}_{p-q+1} \Rbar V(t).
    \end{align*}
    Expanding $V(t)$ into its Taylor series and using the order conditions    
    $\rkc^{\ell} - \ell \rkA\rkc^{\ell-1}=0$ for $0\leq \ell \leq q$  (see, e.g., \cite[(Eq. 5.1)]{semigroups}),
    we can write
    \begin{align*}
      V(t)=\sum_{\ell=q+1}^{p}{k^{\ell}(c^{\ell} - \ell \rkA c^{\ell-1}) u^{(\ell)}(t_n)} + \varphi(t)
    \end{align*}
    with $\norm{\varphi(t)}_{\HH}\lesssim k^{p+1}$ for $t\in (t_n,t_{n+1})$.
    By using the rational functions $r_{\ell,\beta,j}$ we can rewrite this as
    \begin{align*}
      w(t)
%      &= \sum_{j=0}^{p}{
%        \sum_{\ell=0}^{p}{k^{j+\ell} r_{\ell,j,j}(k\rkA)C_{j+1} u^{(\ell)}(t)}}
%        + \bigO(k^{p+1}) \\
      &= \sum_{j=0}^{p-q-1}{\sum_{\ell=q+1}^{p}{k^{j+\ell} r_{\ell,j,j}(k\AA)\AA^{j}\opC_{j+1} u^{(\ell)}(t)}}
        + \bigO(k^{p+1}).
    \end{align*}
    By Lemma~\ref{lemma:rk_rational_function_bound}, we can
    factor the rational function as $r_{\ell,j,j}(k\AA)=g(k\AA)(k \AA)^{p+1-j-\ell}$ .
    This allows us to bound the term by
    \begin{align*}
      \norm{w(t)}_{\HH}
      &\lesssim k^{p+1} \sum_{\ell=q+1}^{p}{\sum_{j=0}^{p-q}{ 
        \norm{\opC_j u^{(\ell)}(t)}_{\HH^{p-\ell}}}}.
    \end{align*}    
    Thus, combining this with a standard estimate for the original truncation error
    we get the stated result.
    %, we get:
  %\begin{align*}
  %  \norm{\opT u(t)-\opT\widetilde{u}}_{\HH}
  %  &\lesssim k^{p+1}
  %    \sum_{\ell=q+1}^{p}{\sum_{j=0}^{p-q}{\norm{\AA^{p-\ell}\big(\opC_j u^{(\ell)}(t)\big)}_{\HH}}}. \qedhere
  %\end{align*}
\end{proof}

\begin{theorem}
  \label{thm:local_convergence}
    Let $u$ solve $\dot{u}=\AAstar u + f$ and let $u_n$ be  the Runge-Kutta approximation:
    \begin{align*}
      U^n&=u_n\ones + k \rkA\otimes \AAstar U^n + k \rkA  F(t_n+k \rkc),
\quad \text{with}\quad  \big(U^n - u(t_n + k \rkc)\big) \in [\dom(\AA)]^m\\
      u_{n+1}&=u_n + k \rkb^{\top} \otimes \AAstar U^n +  k \rkb^{\top} F(t_n + k \rkc).
    \end{align*}    
    Assume that the spectrum of $k\AA$ is contained in a complex sector
    of opening angle $\omega$ and $\sigma(k\AA)$ is disjoint to the set
    \begin{align*}
      Z_{\omega,\delta}&:=
      \big\{ z \in \C: |\operatorname{Arg}{z}|\leq \omega,
      \quad \operatorname{dist}(z,w)\leq\delta, \; \forall w\neq 0,\text{ with}\; r(w)=1\big\},
    \end{align*}
    i.e., $\sigma(k \AA)\cap Z_{\omega,\delta}=\emptyset$ or, in other words, the spectrum avoids all points
    in a complex sector where $r(w)=1$.

    Let $\opT$ be an admissible cutoff operator of order $M\leq p-q$. Then the following estimate holds:
    \begin{align*}
      \norm{\opT(u(t_n)-u_n)}_{\HH}\lesssim
      (1+\rho_k(T)T)^{L(\opT,M)+1}k^{q+M}
      \sum_{j=0}^{\substack{M+\\L(\opT,M)}} \sum_{\ell=0}^{\substack{p+1+\\L(\opT,M)}}
      {
      \max_{0\leq \tau \leq T}\norm{\opC_{j} u^{(\ell)}(\tau)}_{\HH}},
    \end{align*}
    provided that $u$ is sufficiently smooth such that the right-hand side is finite.
  \end{theorem}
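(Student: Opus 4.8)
The strategy is to combine the one-step estimate of Lemma~\ref{lemma:cutoff_error_single_step} with a telescoping argument over the time steps, using the identity~\eqref{eq:telescoping_rk} for the continuous RK error $e_k$, and then close a discrete Gr\"onwall-type recursion whose growth is governed by the spectral separation hypothesis $\sigma(k\AA)\cap Z_{\omega,\delta}=\emptyset$. First I would write the global error as $u(t_n)-u_n=e_k(u,t_n)$ and split it into a sum of contributions from each interval $[t_j,t_{j+1}]$: by~\eqref{eq:telescoping_rk}, the error accumulated over step $j$ is $\int_{t_j}^{t_{j+1}}e_k(\dot u,\tau)\,d\tau$, and each such term can be propagated forward by the discrete solution operator $r(k\AA)^{n-1-j}$, whose uniform boundedness along $\sigma(k\AA)$ — away from the points where $r(w)=1$ — yields a stability factor $\rho_k(T)$. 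This is exactly the mechanism behind the $(1+\rho_k(T)T)$ prefactor appearing in the statement; the $L(\opT,M)+1$ power will come from the $L(\opT,M)+1$-fold iteration in Lemma~\ref{lemma:commutators}, equation~\eqref{eq:iteratec_commtuators_regularity_version}, each level of which introduces one more factor of $\Rbar$ and hence one more potential stability loss.

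Next I would apply the cutoff operator $\opT$ (or rather $\breve\opT$ on stage vectors) and commute it through the propagator. The key point is that $\opT r(k\AA)^{j}$ is \emph{not} simply $r(k\AA)^j\opT$, so I would use the commutator identity $[\Rbar,\breve\opT]=\sum_{j=1}^{L}\Rbar(\Rbar-\id)^j\opC_j + (\Rbar-\id)^{L+1}\opC_{L+1}\Rbar$ of Lemma~\ref{lemma:commutators} to move $\opT$ past the rational function, at the cost of replacing $\opT$ by the lower-order commutators $\opC_j$ (which gain derivatives/regularity on $u$, precisely the reason the sum over $\opC_j u^{(\ell)}$ appears on the right-hand side) and picking up the powers of $(\Rbar-\id)$, which are uniformly bounded by the sector condition. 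On the localized error, I would then invoke Lemma~\ref{lemma:cutoff_error_single_step} interval by interval, which gives the local bound $\lesssim k^{q+1+M}\sum_{\ell,j}\max_\tau\|\opC_{j+1}u^{(\ell)}(\tau)\|$ for a one-step error; summing $n\le T/k$ such contributions converts one power of $k$ into a factor $T$, producing the global rate $k^{q+M}$.

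Assembling the pieces: the global estimate is $\|\opT(u(t_n)-u_n)\|_\HH \lesssim \sum_{j=0}^{n-1}\|\opT\,r(k\AA)^{n-1-j}(\text{step-}j\text{ error})\|$; bound each propagator application using the commutator expansion (so $\opT$ is replaced by $\opC_0,\dots,\opC_{L(\opT,M)}$ and the residual $\opC_{L(\opT,M)+1}$), apply Lemma~\ref{lemma:rk_rational_function_bound} to extract the sharp power of $k\AA$ from each rational function $r_{\ell,\beta,j}$, and use the admissibility of $\opT$ (property~(ii) of Definition~\ref{def:cutoff_operators}) to guarantee $\AA^M\opC_{L(\opT,M)}$ is bounded so the highest-order term does not lose regularity. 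Collecting the geometric-type sum over $j$ produces $(1+\rho_k(T)T)^{L(\opT,M)+1}$, and the ranges of the indices $j\le M+L(\opT,M)$ and $\ell\le p+1+L(\opT,M)$ come from combining the $L(\opT,M)$ commutator levels with the $M$ orders of smoothing and the $p+1$ consistency order. The main obstacle I expect is the bookkeeping in the commutator iteration: one must verify that every time $\opT$ is pushed through $\Rbar$ (or through a power of $\AA$ hidden in the stability propagator $r(k\AA)^{n-1-j}$), the resulting operators $\opC_{j}$ land in the correct domain $\dom(\AA^{\cdot})$ so that Lemma~\ref{lemma:iterated_commutators_2} applies and the identity~\eqref{eq:iterated_commutators_relations} is legitimate; keeping the regularity indices consistent between the single-step lemma, the commutator lemma, and the rational-function lemma — and confirming that the sector/spectral-gap hypothesis is exactly what makes the geometric sum over $n-1-j$ summable into the $(1+\rho_k(T)T)$ factor rather than something exponential — is where the real care is needed.
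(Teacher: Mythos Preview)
Your plan assembles the right ingredients (the commutator expansion of Lemma~\ref{lemma:commutators}, the one-step estimate of Lemma~\ref{lemma:cutoff_error_single_step}, and the spectral-gap hypothesis), but the way you propose to combine them differs from the paper's proof in two places that matter, and as written your argument does not close.

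First, the role of \eqref{eq:telescoping_rk} is not what you describe. That identity says $e_k(u,t_1)-e_k(u,t_2)=\int_{t_1}^{t_2}e_k(\dot u,\tau)\,d\tau$; it converts a \emph{difference} of the global error at two times into an integral of the error for the differentiated problem. It is not a decomposition into propagated local errors. The paper does not start from the variation-of-constants formula $e_k(t_n)=\sum_j r(k\AA)^{n-1-j}\theta_j$ and commute $\opT$ through $r(k\AA)^{n-1-j}$; Lemma~\ref{lemma:commutators} only handles $\Rbar$, and pushing $\opT$ through a high power of $r(k\AA)$ would spawn $O(T/k)$ commutator terms at each level, giving $(T/k)^{L(\opT,M)}$ rather than $(1+\rho_k(T)T)^{L(\opT,M)+1}$. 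Instead, the paper writes a single-step recursion directly for the filtered error, $\opT e_k(t+k)=r(k\AA)\opT e_k(t)+\sum_{j=1}^L q_j(k\AA)\opC_j e_k(t)+\text{(remainder)}+\opT\theta(t)$, solves it by discrete variation of constants, and then applies \emph{summation by parts}. The resulting operators $s_{n,j}(z)=q_j(z)(1-r(z)^{n+1})/(1-r(z))$ are bounded uniformly in $n$ precisely because $q_j(0)=0$ cancels the simple zero of $1-r(z)$ at the origin and the spectral hypothesis rules out the remaining poles; this is the actual mechanism by which the spectral gap enters. Only after the summation by parts does \eqref{eq:telescoping_rk} appear, to rewrite the increments $e_k(t_{n'})-e_k(t_{n'}-k)$ as $\int e_k(\dot u,\tau)\,d\tau$, which feeds a reverse induction on the commutator index $\ell$ (bounding $\opC_{L(\opT)}e_k$ first, then working down to $\opC_0=\opT$).

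Second, your route to the rate $k^{q+M}$ via ``one-step estimate $k^{q+1+M}$ summed over $T/k$ steps'' is not how the paper obtains it, and the recursion above only yields $\|\opC_\ell e_k\|\lesssim (\cdots)\sum_{\nu'}\|e_k^{(\nu')}\|+O(k^{q+M})$, where the unfiltered global errors on the right are in general only $O(k^q)$. The paper closes this by an additional device (its Step~4): it applies the whole argument with $\opT$ replaced by $\AA^M\opC_\ell$, passes to the integrated problem $w:=\AA^{-M}u\in\dom(\AA^M)$ (for which the unfiltered error \emph{is} $O(k^{q+M})$ by standard theory), and finally reconstructs the original cutoff via the binomial identity $\opT_0=\sum_{\ell=0}^{M}\binom{M}{\ell}\AA^M\opC_\ell\AA^{-M}$. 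This last step is absent from your plan and is essential for the claimed order.
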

  \begin{proof}
    Throughout this proof, we will use different modifications of
    the operator $\TT$. In order to not get confused, we will denote the
    ``original'' cutoff operator with $\TT_0$ and write $\TT$ for a
    generic cutoff operator which will change role several times throughout.
    
    Since the construction is a bit lengthy and technical, we briefly 
    outline the proof. It consists of four phases:
    \begin{enumerate}
      \item We derive a recurrence formula for the post-processed error, allowing us to bound the error by
        terms involving higher-order commutators and consistency terms. 
     \item 
       Using the fact that commutators are also viable cutoff operators,
       and the fact that high-order commutators are bounded operators,
       we can perform an induction argument to bound
       the error of the postprocessed solution by the error of the original 
       RK-approximation.
    \item 
      Next, we observe that $\AA^{M} \TT_0$ is also a viable cutoff operator,
      thus the previous step shows that the ``differentiated approximation'' has
      the same order as the original.
    \item
      Finally, we consider the integrated semigroup $v(t):=\AA^{-M}u(t)$, which is known to 
        have convergence order $\bigO(k^{\min(q+M,p)})$. This gives the stated result
        since $\TT_0$ can be represented using operators of the
        form $\AA^{M}\opC_{\ell}\AA^{-M}$.
      \end{enumerate}
      
    \emph{Step 1:}
    Set $\theta(t):=\widetilde{u}(t)-u(t)$.
    We use the continuous notation for the Runge-Kutta error.
    $e_k(t)$ solves (note that in order to get continuous time
    we can just start the RK-iteration at points $t \in (-k,0]$ with zero initial condition).
    \begin{align*}
      e_k(t+k)
      &= r(k\AA) \opT e_k(t) +  \theta(t).
    \end{align*}
    Using the representation $r(z)=r(\infty) + k \rkb^{\top} \rkA^{-1} (\id-z\rkA)^{-1}\ones$
    and the commutator representation in Lemma~\ref{lemma:commutators},
    we get
    \begin{align*}
      \opT e_k(t+k)
 &= r(k\AA) \opT e_k(t)
        + [r(k\AA),\opT] e_k(t)
        + \opT \theta(t) \\
      &= r(k\AA) \opT e_k(t)
        + k (\rkb^{\top} \otimes \AA) \Rbar \big(\breve{\AA}^{-1} [\breve{\AA},\breve{\opT}]\big)
        \Rbar e_k(t) \ones
        + \opT \theta(t).
    \end{align*}
    Using the iterated commutator and Lemma~\ref{lemma:commutators}
    this becomes for  $L \in \N_0$ to be fixed later:
    \begin{align*}
      \opT e_k(t+k)
      &= r(k\rkA) \opT e_k(t) +        
        \sum_{j=1}^{L}{q_j(k \rkA) \opC_{j} e_k(t)} \\
      &\quad +  k\rkb^{\top}\otimes \AA \Rbar(\Rbar-I)^{L+1} \breve\opC_{L+1} \Rbar e_k(t)\ones
        + \opT \theta(t), \nonumber
    \end{align*}
    with $q_j(z):=z \rkb^{\top}(\id-z \rkA)^{-1}\big((\id-z\rkA)^{-1}-\id)^j\ones$
    for $j\geq 1$.
   
    Since everything is linear, we can investigate each contribution on its own.
    First, consider the recursion corresponding to one of the factors $q_j(k\AA)$:
    \begin{align*}
      y^j_{n+1}&=r(k\rkA) y^j_{n} + q_j(k\AA) \opC_j e(t_n)
    \end{align*}
    Expansion of the recursion and summation by parts gives
    \begin{align*}
      y^j_{n+1}&=               
               \sum_{n'=0}^{n}{r(k\AA)^{n'} q_j(k \AA) \opC_j  e(t_n -t_{n'})} \\
             &=s_{n+1}(k \AA) e_0  + \sum_{n'=0}^{n}{s_{n-n',j}(k\AA) \opC_j(e_k(t_n)-e_k(t_n-k))}
               + \opC_j e_k(t_n),
    \end{align*}
    using the rational functions $s_{n,j}(z):=q_j(z)\sum_{j=0}^n{r(z)^n}$.    
    By the explicit formula for geometric sums, we can rewrite this as
    \begin{align*}
      s_{n,j}(z)&=(1-r(z)^{n+1})\frac{q_j(z)}{1-r(z)}.
    \end{align*}
    By A-stability, the term $r(z)^{n+1}$ is uniformly bounded on the right-half plane.
    The function $1-r(z)$ has a simple root at $0$. Since 
    $q_j(0)$ also vanishes , we get that $s_{n,j}$ has no pole at $z=0$. For $z=\infty$, 
     we note that $z(\id-z \rkA)^{-1}\to -\rkA^{-1}$. Thus, we get that
      (up to signs),  both $1-r(z)$ and $q_{j}(z)$ converge to $\rkb^{\top} \rkA^{-1} \ones$ for $z\to \infty$ and any $j\geq 1$. 
    Overall, this means that  the only possible poles are in the set $Z_{\omega,\delta}$.
    Since we assumed $\sigma(k \AA) \cap Z_{\omega,\delta}=\emptyset$, we get
    the operator bound $\norm{s_n(k\AA)}_{\HH\to\HH}\lesssim \rho_k(T)$. 

    This gives, since $y^j_0=0$:
    \begin{align*}
      \norm{y^j_{n+1}}_{\HH}
      &
        \lesssim \rho_k(T)
        \Big(\sum_{n'=0}^{n}{\big\|\opC_j  (e(t_{n'})-e(t_{n'}-k))\big\|_{\HH}}
        + \big\|\opC_j e(t_{n'})\|_{\HH}\Big).
    \end{align*}
    Using \eqref{eq:telescoping_rk},
    we can turn the difference into an integral.
    In addition, we observe that we can use $\opC_{\ell}$ instead of $\opT$,
    only shifting the index of the commutators by $\ell$.    
    The overall estimate becomes:    
    \begin{align*}
      \|\opC_{\ell} e_k(t+k)\|_{\HH}
      &\lesssim
        \rho_k(t+k)
        \sum_{j=1}^{L}{\Big(\int_{\tau=0}^{t}\big\|\opC_{\ell+j}  \dot{e}_k(\tau)\big\|_{\HH}\,d\tau
        + \big\|\opC_{\ell+j} e(t)\|_{\HH}\Big)} \\
      & \quad+ \rho_k(T)k \sum_{n'=0}^{n}{\Big(\|\breve{\opC}_{\ell+L+1} (\Rbar-\id)  e_k(t-n' k) \ones\|_{\HH}
        + \|\opC_{\ell} \varphi(t- n' k)\|_{\HH}\Big)} .
    \end{align*}    

    Since $\dot{e}_k(u,t)$  is just $e_k(\dot{u},t)$ (and analogously for higher derivatives),
    we can also apply the same estimate
    for its derivatives and get for arbitrary $\nu \in \N_0$
    (to simplify notation, we stop explicitly tracking the term due to $\varphi$,
    which is small via Lemma~\ref{lemma:cutoff_error_single_step}):    
    \begin{align}
      \label{eq:tmp1}
      \|\opC_{\ell} e^{(\nu)}_k(t+k)\|_{\HH}
      &\lesssim
        \rho_k(t+k)
        \sum_{j=1}^{L}{\Big(\int_{\tau=0}^{t}\big\|\opC_{\ell+j}  {e}^{(\nu+1)}_k(\tau)\big\|_{\HH}\,d\tau
        + \big\|\opC_{\ell+j} e^{(\nu)}(t)\|_{\HH}\Big) } \nonumber \\
      &\quad+ \rho_k(T)k \sum_{m=0}^{n}{\|\breve{\opC}_{\ell+L+1} (\Rbar-\id)  e^{(\nu)}_k(t-m k) \ones\|_{\HH}} 
      %\quad+ \rho_k(T) k \sum_{m=0}^{n}{\|\opC_{\ell} \varphi^{(\nu)}(t- m k)\|_{\HH}}
        + \bigO(k^{q+M}).
    \end{align}
    which shows~\eqref{eq:the_big_induction_formula}.

    \emph{Step 2:}
    We will prove the following statement by  reverse induction with respect to
    $\ell$ and $\nu$:
    If $\nu\leq \ell \leq L(\opT)$, then:
    \begin{align}
      \label{eq:the_big_induction_formula}
      \norm{\opC_{\ell} e^{(\nu)}_k(t)}_{\HH}
      \lesssim C \big (\rho_k(T)(1+T)\big)^{L(\opT)-\ell}
      \sum_{\nu'=\nu}^{L(\opT)}{
      \norm{e^{(\nu')}(\tau)}_{\HH}}
      + \bigO(k^{q+M})
    \end{align}

    For $\ell=L(\opT)$ and $\nu\leq L(\opT)$ the estimate follows trivially,
    because $\opC_{L(\opT)}$ is a bounded linear operator.
    Thus assume that~\eqref{eq:the_big_induction_formula} holds for
    all $\ell'>\ell$ and $\nu \leq \ell'$.
    Then we use \eqref{eq:tmp1} with $L=L(\opT)-\ell-1$ to get:
    \begin{align*}
      \|\opC_{\ell} e^{(\nu)}_k(t+k)\|_{\HH}
      &\lesssim
        \rho_k(t)
        T \Bigg(\sum_{j=1}^{L(\opT)-\ell-1}{\max_{\tau}\big\|\opC_{\ell+j}  {e}^{(\nu+1)}_k(\tau)\big\|_{\HH}\,d\tau}
        + \big\|\opC_{\ell+j} e^{(\nu)}(t)\|_{\HH}\Bigg)  \\
      &\quad+ \rho_k(T)k \sum_{m=0}^{n}{\|\opC_{L(\opT)} (\Rbar-\id)  e^{(\nu)}_k(t-m k) \ones\|_{\HH}}
        + \bigO(k^{q+M}) \\
      &\stackrel{\eqref{eq:the_big_induction_formula}}{\lesssim}
        \big(\rho_k(t)
        (1+T)\big)^{L(\opT)-\ell} \sum_{\nu'=\ell}^{L(\opT)}{ \max_{\tau}{\norm{e^{(\nu')}(\tau)}_{\HH}}}      
         +\bigO(k^{q+M}).
    \end{align*}

    \emph{Step 3:}
    Since the previous estimate was valid for arbitrary orders $L(\TT)$,
    we can apply this to the case $\opT=A^{M} \opC_\ell$
    (where $\opC_\ell$ is defined using the operator $\opC_0:=\opT_0$ as the basis)
    and $\ell \in \N_0$. This gives
    \begin{align}
      \label{eq:diff_of_cutoff_semigroups}
      \|\AA^M \opC_{\ell} e_k(t_n)\|_{\HH}
      &\lesssim
        C (\rho_k(T)(1+T))^{L(\opT_0,M)-\ell+1} \!\!
        \sum_{\nu'=0}^{L(\opT_0,M)-\ell}{ \!\!
        \big\|{e^{(\nu')}(\tau)}\big\|_{\HH}}
        + \bigO(k^{q+M}).
    \end{align}
    By the theory of \cite{mallo_palencia_optimal_orders_rk} we get that (at least)
    $$
    \|e^{(\nu)}_k(t_n)\|_{\HH}\lesssim
    \rho_k(T) T k^{\min(q+\mu,p)}\sum_{j=q+1}^{p}{\max_{0\leq \tau\leq t_n}\|u^{(\nu+1+j)}(\tau)\|_{\HH^{\mu}}}.
    $$
    for any $\mu\geq 0$ for which the right-hand side is finite.      

    \emph{Step 4:}
    In order to get the ``full order estimate'', we note that we can
    consider the semigroup $w(t):=\AA^{-M} u(t)$. Which satisfies
    $w(t) \in \dom(\AA^{M})$ by construction. Then 
    \eqref{eq:diff_of_cutoff_semigroups} gives:
    \begin{multline*}
      \|\AA^M \opC_\ell \AA^{-M} (u(t_n)-u_n)\|_{\HH} \\
      \lesssim 
        \big(\rho_k(T)(1+T)^{L(\opT_0,M)-\ell+1} \big)
        k^{q+M}
        \sum_{j=q+1}^{p}{\|u^{(L(\TT_0,M)+1+j)}\|_{\HH}}
        + \bigO(k^{q+M}).
    \end{multline*}    
    It is an easy proof by induction that one can write
%    \begin{align*}
%      \opT_0 \AA^{M} &= \AA^{M} \sum_{\ell=0}^{M}{{M \choose \ell} \opC_\ell}.
%    \end{align*}
%    Thus we can write
    \begin{align*}
      \opT_0&=\opT_0 \AA^{M} \AA^{-M} = 
             \sum_{\ell=0}^{M}{{M \choose \ell}\AA^{M} \opC_\ell \AA^{-M}.} 
    \end{align*}
    We can then use the previous estimate to bound all the terms on the right hand side
    and get the stated result. The stated regularity assumptions follow from
    more carefully tracking the consistency terms.
  \end{proof}

  \begin{remark}
    In the definition of $Z_{\omega,\delta}$ it is sufficient to only avoid the points
    where $q_j(z)/(1-r(z))$ has a pole. We chose the more standard assumption,
    which is also made in \cite{mallo_palencia_optimal_orders_rk}, because we are not aware
    if there are any methods for which the weakened assumption would offer an advantage.
  \end{remark}
  \begin{remark}
    The previous results all work with the worst-case of $u \notin \dom(\AA)$. If
    $u$ satisfies the boundary conditions up to some order, the powers of $T$ can
    be reduced. Similarly, for strongly A-stable RK-methods we can follow
    \cite{mallo_palencia_optimal_orders_rk} to get one extra order for the
    base-line approximation. This also reduces the power of $T$ by one.
  \end{remark}
\end{document}